\date{4 December 2022}
\let\oldmarginpar\marginpar
\renewcommand\marginpar[1]{\oldmarginpar{\tiny\bf\begin{flushleft} #1
\end{flushleft}}}
\numberwithin{table}{section}
\newcommand{\xra}{\xrightarrow}
\theoremstyle{plain}  
\newtheorem{theorem}{Theorem}[section]
\newtheorem*{theorem*}{Theorem}
\newtheorem{corollary}[theorem]{Corollary}
\newtheorem{lemma}[theorem]{Lemma}
\newtheorem{proposition}[theorem]{Proposition}
\theoremstyle{definition}
\newtheorem{definition}[theorem]{Definition}
\theoremstyle{remark}
\newtheorem{remark}[theorem]{Remark}
\newtheorem*{remark*}{Remark}
\newtheorem*{claim*}{Claim}
\DeclareFontFamily{OT1}{pzc}{}
\DeclareFontShape{OT1}{pzc}{m}{it}{<-> s * [1.10] pzcmi7t}{}
\DeclareMathAlphabet{\mathpzc}{OT1}{pzc}{m}{it}
\DeclareFontFamily{OT1}{rsfs}{}
 \DeclareFontShape{OT1}{rsfs}{n}{it}{<->rsfs10}{}
 \DeclareMathAlphabet{\curly}{OT1}{rsfs}{n}{it}
\newcommand{\cM}{\mathcal{M}}
\newcommand{\CC}{\mathbb{C}}
\newcommand{\Ad}{\operatorname{Ad}}
\newcommand{\Aut}{\operatorname{Aut}}
\newcommand{\Int}{\operatorname{Int}}
\newcommand{\Out}{\operatorname{Out}}
\newcommand{\Id}{\operatorname{Id}}
\newcommand{\PPP}{{\curly P}}
\newcommand{\SSS}{{\curly S}}
\newcommand{\UUU}{{\curly U}}
\newcommand{\VVV}{{\curly V}}
\newcommand{\qu}{/\kern-.7ex/}
\newcommand{\exh}{\to\kern-1.8ex\to}
\newcommand{\VP}{{\curly V}\kern-0.9ex\PPP}
\newcommand{\suchthat}{\;|\;}
\newcommand{\Kto}{\ar@{*+=[o][F]{\scriptscriptstyle{K}}->}[r]}
\newcommand{{\zerocochain}}{g}
\newcommand{{\onecochain}}{f}
\newcommand{\action}{\varphi}
\newcommand{\ug}{\underline G}
\newcommand{\uh}{\underline H}
\newcommand{\ugh}{\underline {G/H}}
\newcommand{\uz}{\underline Z}
\newcommand{\ugz}{\underline {G/Z}}
\newcommand{\cu}[1]{C^{#1}(\UUU,\ug)}
\newcommand{\cugz}[1]{C^{#1}(\UUU,\ugz)}
\newcommand{\cuz}[1]{C^{#1}(\UUU,\uz)}
\newcommand{\zuc}{Z^1_{\Gamma,\theta,c}(\UUU,\ug)}
\newcommand{\huc}{H_{\Gamma,\theta,c}^{1}(\UUU,\ug)}
\newcommand{\hxc}{H_{\Gamma,\theta,c}^{1}(X,\ug)}
\newcommand{\thxc}{\widetilde H_{\Gamma,\theta,c}^{1}(X,\ug)}
\newcommand{\su}{S(\UUU,G,\Gamma,\theta,c)}
\newcommand{\tsu}{\widetilde{S}(\UUU,G,\Gamma,\theta,c)}
\newcommand{\sv}{S(\VVV,G,\Gamma,\theta,c)}
\newcommand{\sx}{S(X,G,\Gamma,\theta,c)}
\newcommand{\tsx}{\widetilde{S}(X,G,\Gamma,\theta,c)}
\newcommand{\cgamma }[1]{#1^{\gamma}}
\newcommand{\clambda}[1]{#1^{\lambda}}
\newcommand{\rgamma}{\rtimes Z(\Gamma)}
\DeclareMathOperator{\bg}{B\Gamma}
\DeclareMathOperator{\eg}{E\Gamma}
\newcommand{\zu}{Z^1_{\Gamma,\theta}(\UUU,\ug)}
\newcommand{\zugz}{Z^1_{\Gamma,\theta}(\UUU,\ugz)}
\newcommand{\zuz}{Z^1_{\Gamma,\theta}(\UUU,\uz)}
\newcommand{\hu}{H_{\Gamma,\theta}^{1}(\UUU,\ug)}
\newcommand{\huz}{H_{\Gamma,\theta}^{1}(\UUU,\uz)}
\newcommand{\thuc}{\widetilde H_{\Gamma,\theta,c}^{1}(\UUU,\ug)}
\newcommand{\hhu}{H_{\Gamma,\theta}^{0}(\UUU,\ug)}
\newcommand{\hx}{H_{\Gamma,\theta}^{1}(X,\ug)}
\newcommand{\hxh}{H_{\Gamma,\theta}^{1}(X,\uh)}
\newcommand{\thx}{\widetilde H_{\Gamma,\theta}^{1}(X,\ug)}
\newcommand{\thxgh}{\widetilde H_{\Gamma,\theta}^{1}(X,\ugh)}
\newcommand{\hxgz}{H_{\Gamma,\theta}^{1}(X,\ugz)}
\newcommand{\thxgz}{\widetilde H_{\Gamma,\theta}^{1}(X,\ugz)}
\newcommand{\hxz}{H_{\Gamma,\theta}^{1}(X,\uz)}
\newcommand{\hhx}{H_{\Gamma,\theta}^{0}(X,\ug)}
\newcommand{\hhxh}{H_{\Gamma,\theta}^{0}(X,\uh)}
\newcommand{\hhxgh}{H_{\Gamma,\theta}^{0}(X,\ugh)}
\newcommand{\hhxgz}{H_{\Gamma,\theta}^{0}(X,\ugz)}
\newcommand{\hhxz}{H_{\Gamma,\theta}^{0}(X,\uz)}
\newcommand{\cc}[1]{\mathcal C_{#1}}
\newcommand{\ff}{\mathcal F}
\newcommand{\tg}{\theta_{\gamma}}
\newcommand{\ot}{\overline{\theta}}
\DeclareMathOperator{\Fun}{Fun}
\newcommand{\fgg}[1]{{\action}_{\gamma,#1}}
\newcommand{\Z}{\mathbb{Z}}
\newcommand{\andd}{\quad\text{and}\quad}
\newcommand{\class}[1]{\left[#1\right]}
\newcommand{\forevery}{\;\text{for}\;\text{every}\;}
\newcommand{\cMg}{\mathcal{M}'_{\Gamma'}}
\newcommand{\nog}{N_{\Gamma}(\Gamma')}
\newcommand{\hatg}{\hat{G}}
\newcommand{\hatgg}{\hat{G}'}
\newcommand{\iso}{\mathcal{I}so_G}
\newcommand{\g}{\mathcal{G}}
\DeclareMathOperator{\ob}{ob}
\title[Non-connected Lie groups, twisted equivariant bundles and coverings]
{Non-connected Lie groups, twisted equivariant bundles and coverings}
\author{G. Barajas, O. Garc\'{\i}a-Prada, P.~B. Gothen, I. Mundet i Riera}
\subjclass[2010]{Primary 14H60; Secondary 53C07, 58D29}
\keywords{Non-connected Lie group, principal bundle, twisted
  equivariant bundle, covering, non-abelian cohomology}
\thanks{The first author received the support of a fellowship from ”la Caixa” Foundation (ID 100010434 and code  LCF/BQ/DI19/11730022). The second author was partially supported by the Spanish MICINN under
the ICMAT Severo Ochoa grant CEX2019-000904-S, and grants
PID2019-109339GB-C31  and EIN2020-112392.
The third author was partially supported by CMUP (UIDB/MAT/00144/2020)
and the project EXPL/MAT-PUR/1162/2021
  funded by FCT (Portugal) with national funds.
The fourth author was partially supported by the grant
PID2019-104047GB-I00 from the Spanish MICINN}
\begin{document}

\begin{abstract}
  Let  $\Gamma$ be a finite group acting on a Lie group $G$. We consider a class of  group extensions   $1 \to G \to \hat{G} \to \Gamma \to 1$  defined by this action and a $2$-cocycle of $\Gamma$ with values in the centre of $G$.
  We establish and study a
  correspondence between $\hat{G}$-bundles on a manifold  and twisted
  $\Gamma$-equivariant
  bundles with  structure group $G$  on a suitable Galois  $\Gamma$-covering
  of the manifold. We also describe this correspondence in terms of non-abelian cohomology. Our results  apply, in particular, to the case of a compact or
  reductive complex Lie group $\hat{G}$, since  such a group is always isomorphic to an extension as above, where $G$ is the connected component of the identity and $\Gamma$ is the group of connected components of $\hat{G}$.
\end{abstract}

\maketitle



\section{Introduction}

In this paper,  we are concerned with the geometry of bundles with  non-connected
structure group. For the sake of
concreteness,  we present our main results here in  the  smooth
category, but  the analogous results  are also valid in the topological category (under some mild assumptions which we will make precise), as well as  in the holomorphic category.

Let $\hat{G}$ be  a non-connected Lie group. This group fits in an extension of groups
\begin{equation}
 \label{extension}
 1 \to G \to \hat{G} \xra{q} \Gamma \to 1
\end{equation}
where $G$ is the connected component containing the identity, and $\Gamma$ is the group of connected components of $\hat{G}$. We will assume that $\Gamma$ is finite,  although some of our results apply more generally.

Let $X$ be a manifold and $E$ be a principal  $\hat{G}$-bundle over $X$. The quotient of $E$ by $G$ defines a principal $\Gamma$-bundle $Y$ over $X$, that is, a Galois covering $Y\to X$ with Galois group $\Gamma$. It is very natural to try to reduce the study of the geometry of  $\hat{G}$-bundles over $X$ to that of $G$-bundles over $Y$, leading to a situation where the structure group is connected. We will pursue this here in the case in which the
characteristic homomorphism
$\bar{\theta}\colon\Gamma \to \Out(G)$ of the extension
\eqref{extension}, where
$\Out(G)=\Aut(G)/\Int(G)$,  has a lift to a homomorphism $\theta\colon \Gamma \to \Aut(G)$. Here $\Aut(G)$ is the group of automorphisms of $G$ and $\Int(G)$ is the subgroup  of $\Aut(G)$ consisting of  inner automorphisms.
In this situation the group $\hat{G}$ is isomorphic to a group, which we denote by  $G\times_{(\theta,c)} \Gamma$, whose underlying set is the Cartesian product  $G\times  \Gamma$, and the group operation is  given by

$$
(g_1,\gamma_1)\cdot(g_2,\gamma_2)
  =(g_1{g_2}^{\gamma_1} c(\gamma_1,\gamma_2),\gamma_1\gamma_2),
$$
where  $c$ is a $2$-cocycle of $\Gamma$ with values in the centre $Z(G)$ of $G$. Here the action of $\Gamma$ on $Z(G)$ is defined by $\theta$, but clearly only depends  on  $\bar\theta$.

By  a result of de Siebenthal  \cite{siebenthal:1956}, the lift $\theta$ always exists in the case in which  $\hat{G}$ is a compact or a complex reductive
Lie group --- this is the situation that  has primarily  motivated our work, as we will explain below.
Under the assumption of the existence of the lift $\theta$, we are able to identify the  $G$-bundles on $Y$ corresponding to  $\hat{G}$-bundles on $X$ with covering $Y$. These are $G$-bundles on $Y$ equipped with what we call a $(\theta,c)$-twisted
$\Gamma$-equivariant structure (see Definition
\ref{def:gamma-twisted-principal-bundle} for details). Notice that the twisting here refers to both,  the fact that $\Gamma$ acts on $G$ via $\theta$, and the presence of the $2$-cocycle $c$ of $\Gamma$ with values in the centre of $G$.

After establishing  in Section \ref{lie-theory} some facts on the structure of  non-connected Lie groups and their actions, in Section \ref{sec:twist}, we define and study twisted equivariant structures on principal bundles and  associated bundles. With this in hand, in Section \ref{coverings-twist}, we establish a categorical correspondence between  principal $\hat{G}$-bundles on $X$ and  $(\theta,c)$-twisted
$\Gamma$-equivariant $G$-bundles on a suitable covering $Y\to X$ (see Proposition \ref{prop:G-Ghat-equivalence} and Theorem \ref{category-monodromy}).

In Section \ref{non-abelian-cohomology}, we analyse the problem of identifying which $\hat{G}$-bundles over $X$ determine  certain  $\Gamma$-coverings of $X$ from the perspective of non-abelian sheaf cohomology. An answer to this problem is given by a  general result of Grothendieck \cite{grothendieck},  which identifies the set in question with the first cohomology set of a certain  non-abelian sheaf of groups.
While Grothendieck's result does not require, in particular,  the existence of a lift $\theta$ of the characteristic homomorphism of the extension (\ref{extension}), in the case where such a lift exists,  Grothendieck's sheaf of groups has a particularly nice description (see Proposition \ref{prop-grothendieck}).

In view of the results in Section \ref{coverings-twist}, it seems
clear that the first cohomology set of the Grothendieck sheaf over $X$
described in Proposition \ref{prop-grothendieck} must be related to the  twisted equivariant bundles over a covering $Y$ of
$X$. This relation is treated in Section
\ref{twisted-equivariant-cohomology}.  We first give a cohomological
description of the set of equivalence classes of twisted
$\Gamma$-equivariant $G$-bundles and establish the natural long exact
sequences in twisted equivariant cohomology.  In particular, the long
exact sequence analysis allows us to give a criterion for existence of
twisted equivariant bundles (Theorem \ref{prop-long-exact-seq-Y}). In
Theorem \ref{prop-iso-karoubi-ghat-bundles} we finally establish the
bijection between the twisted equivariant cohomology set on the Galois covering $Y$ and
the cohomology set of the Grothendieck sheaf over $X$ mentioned above.

It is important to point out that the notion of twisted
$\Gamma$-equivariant principal bundle over a manifold $X$ acted upon
by $\Gamma$, given in Definition
\ref{def:gamma-twisted-principal-bundle}, does not require that the
action of $\Gamma$ on $X$ be free (notice that here in the notation we
are changing the roles of $X$ and $Y$). In fact the twisted
non-abelian sheaf cohomology analysis given in Sections
\ref{section-definitions}, \ref{reduced-twisted-cohomology} and
\ref{twisted-long-exact-sequences} is valid without the assumption of
$\Gamma$ acting freely on $X$. However, the relation of  twisted
$\Gamma$-equivariant $G$-bundles on $X$ with objects on the quotient
$Y=X/\Gamma$ when $\Gamma$ does not act freely is much more involved
since, in particular, now $Y$ is generally singular.

Twisted equivariant principal $G$-bundles  have been considered in the literature in connection with the study of fixed points under the action of a finite group $\Gamma$ on moduli spaces of  $G$-bundles and $G$-Higgs bundles on a compact Riemann surface $X$, for a complex reductive group $G$. When $\Gamma$ is the group $\Z/2$ generated by an antiholomorphic involution of $X$ and $\Gamma$ acts on $G$ anti-holomorphically, twisted   equivariant principal $G$-bundles coincide with the pseudo-real $G$-bundles studied in \cite{BG,BGH1,BGH2}. For other groups $\Gamma$ these objects have been considered in \cite{basu-garcia-prada,BCFG,BCG,garcia-prada-wilkin}.

We found out recently that twisted equivariant principal $G$-bundles (without the extra twisting given by a 2-cocycle) have been studied in the topological category by  T. tom Dieck in \cite{tom-dieck}. It seems that one of his motivations was related to the notion of real holomorphic vector bundle that had been introduced  a bit earlier by Atiyah \cite{atiyah}. In fact the notion of  pseudo-real $G$-bundle mentioned above  is the generalisation of Atiyah's real  holomorphic vector bundle to the context of holomorphic principal bundles.  Related  objects have also appeared in the work of Balaji--Seshadri \cite{balaji-seshadri} in connection to parahoric bundles, and Donagi--Gaitsgory and others  in the description of the Hitchin fibration for $G$-Higgs bundles in terms of generalised Prym varieties \cite{donagi-gaitsgory}. In the process of completing this paper we came across  recent work by Damiolini \cite{damiolini} where a description of twisted equivariant principal $G$-bundles (without the extra twisting given by a 2-cocycle) is given in terms of parahoric bundles in the algebraic category. In Section \ref{torsors} we briefly comment on the relation of our work to her approach.

This paper emerged from a plan  to generalize Hitchin--Kobayashi and non-abelian Hodge correspondences to the case in which the structure group is non-connected. While there is an extensive literature on the connected case,
to our knowledge, very little has been written when the group is non-connected, even for the case of principal bundles over a Riemann surface. Some work to this end  is carried out in \cite{biswas-gomez}. It is important to point out that even in the study of moduli spaces where the starting structure group is connected, there are objects  with non-connected structure group that emerge naturally.
This happens for example when studying  Cayley correspondences in the context of higher Teichm\"uller theory \cite{BCGGO} or in the  study of fixed points in the moduli space of bundles under the action of a finite group. Such a situation appears when one considers  a non-trivial finite order element of the set of $Z(G)$-bundles, where $Z(G)$ is the centre of a reductive group $G$, acting by tensorisation on the moduli space of $G$-bundles. The fixed points under this action generally  reduce their structure  group to a non-connected group
(see \cite{garcia-prada-ramanan}), and our approach in this paper has been exploited to give a Prym--Narasimhan--Ramanan type construction of fixed points in the moduli space of $G$-bundles and
$G$-Higgs bundles  (see \cite{barajas-garcia-prada1,barajas-basu-garcia-prada}). Non-connected structure groups appear also in the study of fixed points under the action of $\CC^\ast$ on the moduli space of $G$-Higgs bundles, the so-called Hodge bundles \cite{BCGT}.

In forthcoming papers \cite{GGM1,GGM2}, the theory developed here is used to give a  Hitchin--Kobayashi correspondence for Higgs pairs and a non-abelian Hodge correspondence on compact Riemann surfaces
when the structure group is non-connected.
An important ingredient that allows to reduce the necessary existence theorems to the connected group case is the fact that the bundles associated to a twisted
$\Gamma$-equivariant principal bundle relevant for these
correspondences acquire actually a true $\Gamma$-equivariant structure
and the analysis and geometry reduce basically to a $\Gamma$-invariant
version of those appearing in the connected case
\cite{garcia-prada-gothen-mundet:2009}.

We thank the referee for useful comments and corrections.

\newpage

\section{Non-connected Lie groups and their actions}
\label{lie-theory}

\subsection{Extensions of Lie groups}
\label{sec:extensions-lie}

A convenient reference for the material in this section is Hilgert and
Neeb \cite[Chap.~18]{hilgert-neeb:2012}.

Let $G$ be Lie group (not necessarily connected) and let $\Gamma$ be
a discrete group. Consider an extension
\begin{equation}
  \label{eq:extension-Gamma-G0}
  1 \to G \to \hat{G} \xra{q} \Gamma \to 1.
\end{equation}
The adjoint action of $\hat{G}$ on the normal subgroup $G$ defines the
\textbf{characteristic homomorphism}
$\bar{\theta}\colon\Gamma \to \Out(G)$ of the extension
\eqref{eq:extension-Gamma-G0} to the outer automorphism group
$\Out(G)=\Aut(G)/\Int(G)$. We will assume that there is a lift of
$\bar{\theta}$ to a homomorphism $\theta\colon \Gamma \to \Aut(G)$
making the diagram
\begin{equation}\label{lift}
  \xymatrix{
    \Gamma \ar[r]^(.35){\theta} \ar[dr]_{\bar\theta} & \Aut(G) \ar[d] \\
    & \Out(G)
  }
\end{equation}
commutative. We often write
\begin{math}
  \theta_\gamma
\end{math}
for $\theta(\gamma)$ and
\begin{math}
  g^\gamma=\theta_\gamma(g)
\end{math}
for the action of $\gamma\in\Gamma$ on $g\in G$.

Under the assumption of the existence of the lift $\theta$  in (\ref{lift}), equivalence
classes of extensions \eqref{eq:extension-Gamma-G0} with
characteristic
homomorphism $\bar{\theta}$ are classified by the second
cohomology group $H^2_{\theta}(\Gamma,Z(G))$, where we recall that a \textbf{$2$-cochain} $c\in
C^2_{\theta}(\Gamma,Z(G))$ is a map
\begin{displaymath}
  c\colon\Gamma\times\Gamma\to Z(G)
\end{displaymath}
which satisfies $c(\gamma,1)=c(1,\gamma)=1$ for all $\gamma\in\Gamma$
and that the subgroup $Z^2_{\theta}(\Gamma,Z(G))$ of
\textbf{$2$-cocycles} consists of those $c$ which satisfy the \textbf{cocycle
condition}
\begin{equation}\label{cocycle-condition}
  c(\gamma_1,\gamma_2)^{\gamma_0}c(\gamma_0,\gamma_1\gamma_2)
  =c(\gamma_0,\gamma_1)c(\gamma_0\gamma_1,\gamma_2),
\end{equation}
for $\gamma_0,\gamma_1,\gamma_2\in\Gamma$. The extension corresponding to
$c$ is $G\times\Gamma$ as a set, with the product given by
\begin{equation}
  \label{eq:cocycle-product}
  (g_1,\gamma_1)\cdot(g_2,\gamma_2)
  =(g_1{g_2}^{\gamma_1} c(\gamma_1,\gamma_2),\gamma_1\gamma_2).
\end{equation}
We write $G\times_{(\theta,c)} \Gamma$ for the Cartesian product
with the group structure \eqref{eq:cocycle-product}, and the extension
\begin{displaymath}
  1 \to G \to G\times_{(\theta,c)} \Gamma \to \Gamma \to 1
\end{displaymath}
is given by the obvious maps $g\mapsto(g,1)$ and
$(g,\gamma)\mapsto\gamma$.

In order to realise a given extension \eqref{eq:extension-Gamma-G0} in
this way,
note that for $\gamma\in\Gamma$, the image of the map
$q^{-1}(\gamma)\to\Aut(G)$ given by conjugation corresponds to the whole
class $\ot(\gamma)\in\Out(G)=\Aut(G)/\Int(G)$. Therefore, given a lift $\theta$ of $\ot$, we may
choose a section $\Gamma\to\hat G$ of $q$ whose composition with
$\hat G\to \Aut(G)$ is equal to $\theta$. We call such a section a
\textbf{normalised section}. Thus the action of $\theta_\gamma$ on $G$
can be written
  \begin{equation}
      \label{eq:Gamma-acts-conj}
    \theta_\gamma(g)=s(\gamma)gs(\gamma)^{-1}
  \end{equation}
  for $\gamma\in\Gamma$ and $g\in G$. In other words
  $\theta_\gamma = \Ad_{s(\gamma)}\colon G\to G$.


Then the cocycle $c$ is given by
\begin{displaymath}
  s(\gamma_1)s(\gamma_2) = c(\gamma_1,\gamma_2)s(\gamma_1\gamma_2)
\end{displaymath}
and there is an isomorphism
\begin{equation}
  \label{eq:1}
  \begin{aligned}
    G\times_{(\theta,c)} \Gamma &\xra{\cong}\hat{G}\\
    (g,\gamma) &\mapsto gs(\gamma).
  \end{aligned}
\end{equation}
Under this isomorphism, the normalised section $s$ is
\begin{equation}
\label{eq:normalised-section}
\begin{aligned}
  s\colon \Gamma &\to \hat{G}, \\
  \gamma&\mapsto (1,\gamma)
\end{aligned}
\end{equation}
Note also that the cocycle $c$ measures the failure of $s$ to be a group
homomorphism.
In particular, the trivial cocycle corresponds to the
split extension, i.e., the semidirect product
$\hat{G} = G\rtimes_{\theta}\Gamma$ defined by $\theta$.

It is useful to note that for $g\in G$ and
$\gamma\in\Gamma$ we have
\begin{equation}\label{inverse}
(1,\gamma)^{-1}=((c(\gamma,\gamma^{-1})^{-1})^{\gamma^{-1}},\gamma^{-1})=(c(\gamma^{-1},\gamma)^{-1},\gamma^{-1})
\end{equation}
A second useful observation is the following. Since $\Gamma$ acts
on $Z(G)$ via ${\theta}$ and the abelian group $Z(G)$ is its
own centre, we may use the same construction to obtain an extension
\begin{displaymath}
  1 \to Z(G) \to \hat{\Gamma}_{\theta,c} \to \Gamma \to 1,
\end{displaymath}
where
\begin{equation}
  \label{eq:Gamma-Z-extension}
  \hat{\Gamma}_{\theta,c} = Z(G)\times_{({\theta},c)}\Gamma \subset \hat{G}
\end{equation}
and we have the
commutative diagram
\begin{equation}
  \label{eq:ZG-G-Gamma-extension}
  \begin{CD}
    1 @>>> Z(G) @>>> \hat{\Gamma}_{\theta,c} @>>> \Gamma @>>> 1 \\
    &&@VVV @VVV @VVV\\
    1 @>>> G @>>> \hat{G} @>>> \Gamma @>>> 1 \ .
  \end{CD}
\end{equation}


Finally, for completeness, we recall that a
\textbf{1-cochain}
$a\in B^1_{\theta}(\Gamma,Z(G))$ is a
map $a\colon\Gamma\to Z(G)$ and its coboundary
is $\delta a\in Z^2_{\theta}(\Gamma,Z(G))$ given by
\begin{displaymath}
  \delta a(\gamma_0,\gamma_1) =
  a(\gamma_1)^{\gamma_0} a(\gamma_0\gamma_1)^{-1}a(\gamma_0);
\end{displaymath}
note that the group of 1-cocycles is precisely the kernel of $\delta$.
Two extensions defined by cocycles which differ by a coboundary
$\delta a$ are isomorphic, the isomorphism being given explicitly by
\begin{displaymath}
  (g,\gamma) \mapsto (ga(\gamma),\gamma).
\end{displaymath}
We define $H^2_{\theta}(\Gamma,Z(G)):=Z^2_{\theta}(\Gamma,Z(G))/\delta B^1_{\theta}(\Gamma,Z(G))$, the \textbf{second Galois cohomology group of $\Gamma$ with values in $Z(G)$}. See \cite{hilgert-neeb:2012, serre-galois} for more information on this object.

\begin{remark}
  We note the obvios fact that the action of
$\Gamma$ on the centre $Z(G)$ of $G$ does not depend on the choice
of the lift $\theta$, in other words, $\bar\theta$ defines a canonical
action of $\Gamma$ on $Z(G)$. Thus objects such as
$\hat{\Gamma}_{\theta,c}$ and $H^2_{\theta}(\Gamma,Z(G))$ defined in
terms of this action really only depend on $\bar\theta$.
\end{remark}

\subsection{Non-connected compact and complex reductive Lie groups}
\label{non-connected reductive}

We will be specially interested in the case in which $G$ is a compact or complex reductive Lie group
with identity component $G_0$ and $\Gamma=\pi_0(G)$ is the group
of components. In particular, this means that $G$ has a maximal compact
subgroup $K$ which meets all its connected components, and thus $\Gamma$ is finite.
By a theorem of
de Siebenthal \cite{siebenthal:1956}, the exact sequence
\begin{displaymath}
  1 \to \Int(K_0) \to \Aut(K_0) \to \Out(K_0) \to 1
\end{displaymath}
splits. Hence, by composing the characteristic homomorphism $\bar{\theta}\colon\Gamma \to \Out(K_0)$  with a splitting homomorphism
$\Out(K_0)\to \Aut(K_0)$, we obtain a lift of $\bar{\theta}$
to a homomorphism $\theta\colon \Gamma \to
\Aut(K_0)$,
and we can choose an associated normalised section giving
us a cocycle $c$.
Moreover, by the universal property of the
complexification of a compact Lie group, $\theta$ can also be viewed
as a lift of the characteristic homomorphism $\bar{\theta}\colon\Gamma
\to \Out(G_0)$, so that we have compatible commutative diagrams
(\ref{lift}) for the groups $K_0$ and $G_0$.
Note that $Z(K_0)\subset Z(G_0)$, again by the universal
property of the complexification and, hence, $c$ can also be viewed as
a cocycle for the $\Gamma$-action on $G_0$.
We thus have the following.

\begin{proposition}\label{group-structure}
Let $G$ be a complex reductive Lie group with identity component
$G_0$ and maximal compact subgroup $K$. Then $G$ is isomorphic to the group
$G_0\times_{(\theta,c)}\Gamma$ for $\theta$ and $c$ as above, and
$K$ can be identified with the subgroup
$K_0\times_{(\theta,c)}\Gamma$. \qed
\end{proposition}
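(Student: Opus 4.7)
The plan is to apply directly the general machinery of Section~\ref{sec:extensions-lie} to the extension
\begin{displaymath}
  1 \to G_0 \to G \xra{q} \Gamma \to 1,
\end{displaymath}
using the lift $\theta$ and the cocycle $c$ produced in the discussion preceding the statement (via de Siebenthal's splitting and the universal property of complexification). Once a normalised section $s\colon \Gamma \to G$ with $\Ad_{s(\gamma)}|_{G_0} = \theta_\gamma$ is fixed, the isomorphism $G \cong G_0 \times_{(\theta,c)} \Gamma$ follows immediately from \eqref{eq:1}. The real content of the proposition is the identification of $K$ with $K_0 \times_{(\theta,c)} \Gamma$, which requires arranging $s$ so that its image lies inside $K$.

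To build such an $s$, I would use that $K$ meets every connected component of $G$, so $q|_K\colon K \to \Gamma$ is surjective with kernel $K_0$. For each $\gamma \in \Gamma$, pick any $k_\gamma \in K$ with $q(k_\gamma) = \gamma$; then $\Ad_{k_\gamma}|_{K_0}$ and $\theta_\gamma$ are two automorphisms of $K_0$ with the same image $\bar\theta(\gamma)$ in $\Out(K_0)$, so they differ by an inner automorphism $\Ad_{k'_\gamma}$ with $k'_\gamma \in K_0$, and I would set $s(\gamma) := (k'_\gamma)^{-1} k_\gamma \in K$ (with $s(1) := 1$). The universal property of complexification extends the resulting equality $\Ad_{s(\gamma)}|_{K_0} = \theta_\gamma$ to $G_0$.

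With this $s$, the cocycle $c(\gamma_1,\gamma_2) = s(\gamma_1)s(\gamma_2)s(\gamma_1\gamma_2)^{-1}$ lies in $K \cap G_0 = K_0$, and since $\theta$ is a homomorphism it acts trivially on $G_0$ by conjugation, hence belongs to $Z(G_0) \cap K_0 = Z(K_0)$, in agreement with the preceding text. Applying \eqref{eq:1} yields the isomorphism $G_0 \times_{(\theta,c)} \Gamma \xra{\cong} G$ given by $(g,\gamma) \mapsto g\,s(\gamma)$. Because $s(\gamma) \in K$ and $K \cap G_0 = K_0$, an element $g\,s(\gamma)$ lies in $K$ if and only if $g \in K_0$, which identifies $K$ with $K_0 \times_{(\theta,c)} \Gamma$.

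The main obstacle I anticipate is precisely the refinement requiring $s$ to be valued in $K$: without it only the first isomorphism would follow from \eqref{eq:1}, and the subgroup $K$ would not automatically correspond to $K_0 \times_{(\theta,c)} \Gamma$. This refinement relies crucially on de Siebenthal's splitting being applied inside $\Aut(K_0)$ rather than $\Aut(G_0)$, together with the compatibility $Z(K_0) \subset Z(G_0)$ noted in the text; a splitting chosen directly for $\Out(G_0)$ would not in general produce a section landing in $K$.
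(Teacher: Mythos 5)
Your proof is correct and follows essentially the same route as the paper: de Siebenthal's splitting applied to $\Out(K_0)$, a normalised section of $K\to\Gamma$ taking values in $K$, the resulting cocycle $c$ with values in $Z(K_0)\subset Z(G_0)$, and the general isomorphism \eqref{eq:1}. The paper gives no separate proof beyond the discussion preceding the statement, where the $K$-valued normalised section is left implicit; you have simply spelled out that construction and the verification that $c$ is central.
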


\begin{remark}
  \label{rem:K0-Gamma-invariant}
Note, in particular, that the subgroup $K_0\subset G_0$ is
$\Gamma$-invariant. This is because the lift
$\theta$ of the characteristic homomorphism for the extension $G_0\to G\to\Gamma$
comes from a lift for the extension $K_0\to K\to\Gamma$, and is therefore compatible
with it.
\end{remark}

\subsection{Actions of extensions}
\label{sec:actions-extensions-lie}

Our objective in this section is to describe actions of a Lie group
$\hat{G}$ given as an extension as in Section~\ref{sec:extensions-lie}
in terms of what will be called \emph{$(\theta,c)$-twisted
  $(\Gamma,G)$-actions}.  For simplicity we shall state definitions
and results simply in terms of actions on sets. We leave to the reader
to make the obvious modifications for smooth actions on
manifolds, holomorphic actions on complex manifolds, linear actions
on vector spaces, etc.

Let $\Gamma$ be a group which acts (on the left) on a group $G$ via
$\theta\colon\Gamma\to\Aut(G)$; we write $g^\gamma =
\theta_\gamma(g)$ where
$\theta_\gamma=\theta(\gamma)$. Let $Z=Z(G)$ be the centre of $G$
and let $c\colon\Gamma\times\Gamma\to Z$
be a cocycle for the $\Gamma$-action on $Z$ induced by $\theta$.

\begin{definition}
  \label{def:G-Gamma-twisted-action}
  A \textbf{$(\theta,c)$-twisted left (respectively, right) $(G,\Gamma)$-action} on a
  set $M$ is given by the following data:
\begin{itemize}
\item a left (respectively, right) action of $G$ on $M$, for
  $g\in G$ and $m\in M$ written $m\mapsto g\cdot m$ in the case of a
  left action, and $m\mapsto m\cdot g$ in the case of a right action.
  \end{itemize}
  Moreover, in the case of a \emph{left action},
  \begin{itemize}
  \item a map
    \begin{math}
      \Gamma\times M\to M, \; (\gamma,m) \mapsto \gamma\cdot m
    \end{math}
    satisfying
    \begin{align*}
      1\cdot m&=m, &m\in M, \tag{\textit{i}}\\
      \gamma\cdot(g\cdot m) &= g^\gamma\cdot(\gamma\cdot m),
                   & g\in G,\;\gamma \in \Gamma, \tag{\textit{ii}}\\
      \gamma_1\cdot(\gamma_2\cdot m)
              &=c(\gamma_1,\gamma_2)
                \cdot((\gamma_1\gamma_2)\cdot m),
                   & m\in M,\;\gamma_1,\gamma_2\in\Gamma, \tag{\textit{iii}}
    \end{align*}
  \end{itemize}
  while in the case of a \emph{right action},
  \begin{itemize}
  \item a map
    \begin{math}
      M\times \Gamma \to M, \; (m,\gamma) \mapsto m\cdot \gamma
    \end{math}
    satisfying
    \begin{align*}
      m\cdot 1&=m, &m\in M,\tag{\textit{i}}\\
      (m\cdot g^\gamma)\cdot \gamma &= (m\cdot \gamma)\cdot g,
                   &g\in G,\;\gamma \in \Gamma,\tag{\textit{ii}}\\
      (m\cdot \gamma_1)\cdot \gamma_2
              &=(m\cdot c(\gamma_1,\gamma_2))\cdot(\gamma_1\gamma_2),
                   &m\in M,\;\gamma_1,\gamma_2\in\Gamma. \tag{\textit{iii}}
    \end{align*}
  \end{itemize}
\end{definition}

\begin{remark}
  Sometimes we shall be interested in the case when $G=Z$ is
  abelian. In that case we usually speak simply of a \textbf{$(\theta,c)$-twisted
    $\Gamma$-action}. In particular, given a $(\theta,c)$-twisted
    $(G,\Gamma)$-action, there is an associated $(\theta,c)$-twisted
  $\Gamma$-action, obtained by restricting the action of $G$ to its
  centre $Z$. In this situation, notice that if the subgroup of $Z$ generated by
  the image of $c$ acts trivially on $M$, then  a $(\theta,c)$-twisted
  $\Gamma$-action is  a true  $\Gamma$-action.
\end{remark}

\begin{remark}
  There are several equivalent ways of formulating the conditions of
  Definition~\ref{def:G-Gamma-twisted-action}. For example, using
  condition (\textit{ii}) for a twisted left action, condition (\textit{iii}) can be
  written as
  \begin{displaymath}
    \gamma_1\cdot(\gamma_2\cdot m)=
    (\gamma_1\gamma_2)\cdot(c(\gamma_1,\gamma_2)^{(\gamma_1\gamma_2)^{-1}}
    \cdot m).
  \end{displaymath}
  Similarly, the condition (\textit{iii}) for a right action can written as
  \begin{displaymath}
      (m\cdot \gamma_1)\cdot \gamma_2
        =(m\cdot(\gamma_1\gamma_2))\cdot
        c(\gamma_1,\gamma_2)^{(\gamma_1\gamma_2)^{-1}}.
  \end{displaymath}
\end{remark}


Assume now that $\Gamma$ is discrete, and let $\hat{G} = G\times_{(\theta,c)} \Gamma$
be the extension of $\Gamma$ by $G$ determined by $c$ and $\theta$ as
in the preceding section.


\begin{proposition}
\label{prop:-G-Gamma-twisted-actions}
There is a bijective correspondence between
$\hat{G}$-actions on a set $M$ and
$(\theta,c)$-twisted $(G,\Gamma)$-actions on $M$,
given as follows:
\begin{itemize}
\item Given an action of $\hat{G}$ on $M$, the action of the subgroup
  $G\subseteq \hat{G}$ on $M$ is defined by restriction, and the action of
  $\gamma\in \Gamma$ on $M$ is defined to be the action of
  $(1,\gamma)\in \hat{G}$.
\item Given a $(\theta,c)$-twisted $(G,\Gamma)$-action on $M$, in
  the case of a left action, we define the $\hat{G}$-action on $M$ by
  \begin{displaymath}
    (g,\gamma)\cdot m = g\cdot(\gamma\cdot m)
  \end{displaymath}
  while, in the case of a right action, we define
  \begin{displaymath}
    m\cdot(g,\gamma) = (m\cdot g)\cdot\gamma.
  \end{displaymath}
\end{itemize}
\end{proposition}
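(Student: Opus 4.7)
The plan is to verify that the two constructions described in the statement are well-defined and mutually inverse, handling the left action case explicitly (the right action case is strictly analogous). Throughout, the key is to translate between the group law \eqref{eq:cocycle-product} in $\hat G$ and the three axioms of Definition \ref{def:G-Gamma-twisted-action}.

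First, starting from a $\hat G$-action on $M$, define the $G$-action by restriction along $g\mapsto(g,1)$ and set $\gamma\cdot m := (1,\gamma)\cdot m$. Axiom (\textit{i}) is immediate since $(1,1)$ is the identity of $\hat G$. For (\textit{ii}), use the identity
\begin{displaymath}
(1,\gamma)\cdot(g,1) = (g^\gamma c(\gamma,1),\gamma) = (g^\gamma,\gamma) = (g^\gamma,1)\cdot(1,\gamma),
\end{displaymath}
where both equalities follow from \eqref{eq:cocycle-product} together with the normalization $c(\gamma,1)=c(1,\gamma)=1$; applying both sides to $m\in M$ yields axiom (\textit{ii}). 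For (\textit{iii}), compute
\begin{displaymath}
(1,\gamma_1)\cdot(1,\gamma_2)=(c(\gamma_1,\gamma_2),\gamma_1\gamma_2)=(c(\gamma_1,\gamma_2),1)\cdot(1,\gamma_1\gamma_2),
\end{displaymath}
and again apply to $m$.

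Second, given a $(\theta,c)$-twisted $(G,\Gamma)$-action on $M$, define $(g,\gamma)\cdot m := g\cdot(\gamma\cdot m)$. The identity element $(1,1)$ acts trivially by (\textit{i}). To verify associativity, i.e.\ that $((g_1,\gamma_1)(g_2,\gamma_2))\cdot m = (g_1,\gamma_1)\cdot((g_2,\gamma_2)\cdot m)$, expand the right-hand side:
\begin{displaymath}
(g_1,\gamma_1)\cdot(g_2\cdot(\gamma_2\cdot m)) = g_1\cdot\bigl(\gamma_1\cdot(g_2\cdot(\gamma_2\cdot m))\bigr),
\end{displaymath}
then apply (\textit{ii}) to move $g_2$ past $\gamma_1$, yielding $g_1 g_2^{\gamma_1}\cdot(\gamma_1\cdot(\gamma_2\cdot m))$, and finally apply (\textit{iii}) to collapse $\gamma_1\cdot(\gamma_2\cdot\ )$ into $c(\gamma_1,\gamma_2)\cdot((\gamma_1\gamma_2)\cdot\ )$. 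Since $c(\gamma_1,\gamma_2)\in Z(G)$ commutes with $g_1g_2^{\gamma_1}$ in $G$, the result equals $g_1 g_2^{\gamma_1}c(\gamma_1,\gamma_2)\cdot((\gamma_1\gamma_2)\cdot m)$, which by definition is $((g_1,\gamma_1)(g_2,\gamma_2))\cdot m$ in view of \eqref{eq:cocycle-product}.

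Finally, the two constructions are mutually inverse by direct inspection: starting from a $\hat G$-action, reconstructing via $(g,\gamma)\cdot m = g\cdot(\gamma\cdot m)$ recovers the original action because $(g,\gamma)=(g,1)\cdot(1,\gamma)$ in $\hat G$; conversely, restricting the resulting $\hat G$-action to $G$ and to the section $\gamma\mapsto(1,\gamma)$ returns the given $G$- and $\Gamma$-data. The right action case proceeds identically, using the analogous factorization $(g,\gamma)=(g,1)\cdot(1,\gamma)$ on the right. The only subtlety worth flagging is the use of centrality of $c$ in the associativity check; everything else is formal bookkeeping with the normalization conditions on $c$.
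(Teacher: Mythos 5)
Your proposal is correct and follows essentially the same route as the paper: the same two identities in $\hat G$ for one direction, and the same associativity expansion via axioms (\textit{ii}) and (\textit{iii}) for the other, with the right-action case left as an analogous check. The only differences are cosmetic — you make the mutual-inverse verification explicit (the paper only remarks that the formula is forced by $(g,\gamma)=(g,1)(1,\gamma)$), and your appeal to centrality of $c(\gamma_1,\gamma_2)$ in the associativity step is actually unnecessary, since the product formula already yields the factors in the required order.
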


\begin{proof}
  Suppose we have an action of $\hat{G}$ on $M$. Let us check the
  conditions of Definition~\ref{def:G-Gamma-twisted-action} with the
  actions of $G$ and $\Gamma$ defined as in the statement of the
  proposition. Let $g\in \hat{G}$ and
  $\gamma,\gamma_1,\gamma_2\in\Gamma$. Then we have identities in
  $\hat{G}$,
  \begin{align*}
    &(1,\gamma)(g,1) = (g^\gamma,\gamma)
      = (g^\gamma,1)(1,\gamma)\quad\text{and}\\
    &(1,\gamma_1)(1,\gamma_2) = (c(\gamma_1,\gamma_2),\gamma_1\gamma_2)
      = (c(\gamma_1,\gamma_2),1) (1,\gamma_1\gamma_2),
  \end{align*}
  which show that the conditions for a $(G,\Gamma)$-twisted left
  action are satisfied when $\hat{G}$ acts on the left.
    The same identities show that the conditions for a
    $(G,\Gamma)$-twisted right action are satisfied when $\hat{G}$ acts
    on the right.

  Conversely, suppose we have a $(G,\Gamma)$-twisted
  action on $M$, and define the action of $\hat{G}$ as in the statement of
  the proposition (it is worth noting that this definition is forced
  upon us by the identity
  \begin{math}
    (g,\gamma) = (g,1)(1,\gamma)
  \end{math}).
  We must check that this in
  fact defines an honest $\hat{G}$-action. In the case of a
  left action we have, for $g_1,g_2\in G$ and $\gamma_1,\gamma_2\in\Gamma$:
  \begin{align*}
    (g_1,\gamma_1)\cdot((g_2,\gamma_2)\cdot m)
      &=(g_1,\gamma_1)\cdot (g_2\cdot(\gamma_2\cdot m))\\
      &=g_1\cdot(\gamma_1\cdot(g_2\cdot(\gamma_2\cdot m)))\\
      &=g_1\cdot(g_2^{\gamma_1}\cdot(\gamma_1\cdot(\gamma_2\cdot m)))\\
      &=g_1\cdot(g_2^{\gamma_1}\cdot(c(\gamma_1,\gamma_2)
        \cdot ((\gamma_1\gamma_2)\cdot m)))\\
      &=(g_1g_2^{\gamma_1}c(\gamma_1,\gamma_2))\cdot
        ((\gamma_1\gamma_2)\cdot m)\\
      &=((g_1,\gamma_1)(g_2,\gamma_2))\cdot m.
  \end{align*}
  We leave to the reader the analogous calculation in the case of a
  right action.
\end{proof}

\begin{remark}\label{Gamma-hat-action}
  Proposition \ref{prop:-G-Gamma-twisted-actions} implies in particular that there is a bijective correspondence between $\hat{\Gamma}$-actions on $M$ and $(\theta,c)$-twisted $\Gamma$-actions on $M$, where $\hat{\Gamma}$ is given by
(\ref{eq:Gamma-Z-extension}).
\end{remark}

A right $\hat{G}$-action on $M$ can be converted into a left
$\hat{G}$-action and vice-versa in the standard way. In the next
proposition we interpret this in terms of twisted
$(G,\Gamma)$-actions. Note the twist in the conversion of the twisted $\Gamma$-action.

\begin{proposition}
  Let $\hat{G}=G\times_{(\theta,c)}\Gamma$ be as above. Suppose
  $\hat{G}$ acts on $M$ on the left and define the corresponding right
  $\hat{G}$-action in the standard way by
  \begin{displaymath}
    m\cdot(g,\gamma) = (g,\gamma)^{-1}\cdot m.
  \end{displaymath}
  Then
  \begin{align*}
    m\cdot (g,1) &=(g^{-1},1)\cdot m,\\
    m\cdot(1,\gamma)&=(c(\gamma^{-1},\gamma)^{-1},1)
                  \cdot((1,\gamma^{-1})\cdot m).
  \end{align*}
  Similarly, if $\hat{G}$ acts on $M$ on the right, then the left
  $\hat{G}$-action $(g,\gamma)\cdot m=m\cdot (g,\gamma)^{-1}$ satisfies
  \begin{align*}
    (g,1)\cdot m &= m \cdot(g^{-1},1),\\
    (1,\gamma)\cdot m &= (m\cdot
      (c(\gamma^{-1},\gamma)^{-1},1))\cdot(1,\gamma^{-1}).
  \end{align*}
\end{proposition}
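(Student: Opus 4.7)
The plan is to reduce everything to an explicit computation inside $\hat G=G\times_{(\theta,c)}\Gamma$, using the product formula \eqref{eq:cocycle-product} and the inverse formula \eqref{inverse}, and then apply Proposition~\ref{prop:-G-Gamma-twisted-actions} to translate from $\hat G$-actions to twisted $(G,\Gamma)$-actions.

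First I would compute the two relevant inverses. From \eqref{eq:cocycle-product} and $c(1,1)=1$, the subgroup $\{(g,1)\suchthat g\in G\}$ is just $G$, so $(g,1)^{-1}=(g^{-1},1)$; and from \eqref{inverse} we have $(1,\gamma)^{-1}=(c(\gamma^{-1},\gamma)^{-1},\gamma^{-1})$. Plugging in $m\cdot(g,\gamma):=(g,\gamma)^{-1}\cdot m$ with $\gamma=1$ immediately gives the first identity $m\cdot(g,1)=(g^{-1},1)\cdot m$.

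Next I would split the element $(c(\gamma^{-1},\gamma)^{-1},\gamma^{-1})$ as a product inside $\hat G$. Using \eqref{eq:cocycle-product} together with $c(1,\gamma^{-1})=1$, one checks
\begin{equation*}
(c(\gamma^{-1},\gamma)^{-1},1)\cdot(1,\gamma^{-1})=(c(\gamma^{-1},\gamma)^{-1},\gamma^{-1}).
\end{equation*}
Applying this to $m$ and using associativity of the $\hat G$-action yields
\begin{equation*}
m\cdot(1,\gamma)=(1,\gamma)^{-1}\cdot m=(c(\gamma^{-1},\gamma)^{-1},1)\cdot\bigl((1,\gamma^{-1})\cdot m\bigr),
\end{equation*}
which is the second identity. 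Under the bijection of Proposition~\ref{prop:-G-Gamma-twisted-actions}, the left-hand side is the twisted right action of $\gamma\in\Gamma$ on $m$, the factor $(1,\gamma^{-1})$ acts as the twisted left $\Gamma$-action of $\gamma^{-1}$, and the factor $(c(\gamma^{-1},\gamma)^{-1},1)$ acts via the $G$-action, so the formulae in the statement are exactly the translations of these identities back into the twisted $(G,\Gamma)$-language.

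The second half of the statement (starting from a right $\hat G$-action and producing a left one by $(g,\gamma)\cdot m:=m\cdot(g,\gamma)^{-1}$) is proved by the same two computations read in the other direction; no new input is required. No step looks like a genuine obstacle: the only subtlety is keeping track of where the factor $c(\gamma^{-1},\gamma)^{-1}$ must sit, which is the reason the $\Gamma$-part of the conversion picks up a twist rather than being the naive $\gamma\mapsto\gamma^{-1}$, and this is precisely what the explicit inverse \eqref{inverse} records.
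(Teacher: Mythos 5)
Your proof is correct and follows essentially the same route as the paper: both arguments reduce to the inverse formula \eqref{inverse} and the factorisation $(1,\gamma)^{-1}=(c(\gamma^{-1},\gamma)^{-1},1)(1,\gamma^{-1})$ inside $\hat G$, with the right-action case handled symmetrically. No gaps.
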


\begin{proof}
  Consider the case of a left $\hat{G}$-action (the case of a right
  action is analogous). The formula for the right action of $(g,1)$
  follows from $(g,1)^{-1}=(g^{-1},1)$ and the formula for the right
  action of $(1,\gamma)$ follows from
  \begin{displaymath}
    (1,\gamma)^{-1} = (c(\gamma^{-1},\gamma)^{-1},\gamma^{-1})
      = (c(\gamma^{-1},\gamma)^{-1},1)(1,\gamma^{-1}),
  \end{displaymath}
  where we have used \eqref{inverse}.
\end{proof}

This proposition motivates the following definition.

\begin{definition}
  Suppose we have a $(\theta,c)$-twisted $(G,\Gamma)$-left action on
  $M$. Then the \textbf{corresponding $(\theta,c)$-twisted
    $(G,\Gamma)$-right action on $M$} is defined by
  \begin{align*}
    m\cdot g &=g^{-1}\cdot m,\\
    m\cdot \gamma&=c(\gamma^{-1},\gamma)^{-1}
                  \cdot(\gamma^{-1}\cdot m).
  \end{align*}
  Similarly, given a $(\theta,c)$-twisted $(G,\Gamma)$-right action on
  $M$, the \textbf{corresponding $(\theta,c)$-twisted
    $(G,\Gamma)$-left action on $M$} is defined by
  \begin{align*}
    g\cdot m &= m \cdot g^{-1},\\
    \gamma\cdot m &= (m\cdot
      c(\gamma^{-1},\gamma)^{-1})\cdot\gamma^{-1}.
  \end{align*}
\end{definition}

\begin{definition}\label{equivarian-maps}
  A map $f\colon M\to N$ between sets with $(\theta,c)$-twisted
  $(G,\Gamma)$-actions is said to be \textbf{$(\theta,c)$-twisted
  $(G,\Gamma)$-equivariant} if it satisfies
\begin{align*}
  &f(g\cdot m) = g\cdot f(m)\quad\text{and}\quad f(\gamma\cdot m) =
  \gamma\cdot f(m), &g\in G,\;\gamma\in\Gamma, m\in M,
\end{align*}
in the case of a left action, with the obvious modification in the
case of a right action.
\end{definition}

The following proposition
shows that the correspondence of
Proposition~\ref{prop:-G-Gamma-twisted-actions} is natural.

\begin{proposition}
  \label{prop:twisted-equivariant-hat}
  A map $f\colon M\to N$ between sets with $(\theta,c)$-twisted
  $(G,\Gamma)$-actions is $(\theta,c)$-twisted
  $(G,\Gamma)$-equivariant if and only if it is $\hat{G}$-equivariant for
  $\hat{G}=G\times_{(\theta,c)}\Gamma$.
\end{proposition}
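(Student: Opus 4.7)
The plan is to reduce the proof to the definitions, using the explicit bijection between $\hat{G}$-actions and twisted $(G,\Gamma)$-actions given in Proposition~\ref{prop:-G-Gamma-twisted-actions}. Since that correspondence is given by two very simple formulas — restriction on the one hand, and $(g,\gamma)\cdot m=g\cdot(\gamma\cdot m)$ on the other — the equivariance condition on each side transports almost verbatim to the other. So the argument will just be a direct chase through the definitions; there is no real obstacle, and certainly no need for any cohomological input.

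I would treat the left-action case in detail and leave the right-action case to the reader. For the forward implication, assume $f$ is $(\theta,c)$-twisted $(G,\Gamma)$-equivariant in the sense of Definition~\ref{equivarian-maps}. Then for any $(g,\gamma)\in\hat{G}$ and $m\in M$ I compute
\begin{align*}
  f\bigl((g,\gamma)\cdot m\bigr)
  &= f\bigl(g\cdot(\gamma\cdot m)\bigr)
   = g\cdot f(\gamma\cdot m)
   = g\cdot(\gamma\cdot f(m))
   = (g,\gamma)\cdot f(m),
\end{align*}
where the first and last equalities use the definition of the $\hat{G}$-action from Proposition~\ref{prop:-G-Gamma-twisted-actions}, and the two middle equalities use $G$- and $\Gamma$-equivariance of $f$ respectively.

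For the converse, assume $f$ is $\hat{G}$-equivariant. Since the $G$-action on $M$ (and $N$) is by definition the restriction of the $\hat{G}$-action along $g\mapsto(g,1)$, the map $f$ is automatically $G$-equivariant. Similarly the twisted $\Gamma$-action is by definition $\gamma\cdot m=(1,\gamma)\cdot m$, and so
\begin{displaymath}
  f(\gamma\cdot m) = f\bigl((1,\gamma)\cdot m\bigr) = (1,\gamma)\cdot f(m) = \gamma\cdot f(m),
\end{displaymath}
giving the twisted $\Gamma$-equivariance. Together these give $(\theta,c)$-twisted $(G,\Gamma)$-equivariance in the sense of Definition~\ref{equivarian-maps}. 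The right-action case is handled by the same computations with the obvious modifications from Definition~\ref{def:G-Gamma-twisted-action}, which completes the proof.
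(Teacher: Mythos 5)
Your proof is correct and is essentially the paper's argument made explicit: the paper notes that the ``only if'' direction follows because $\hat{G}$ is generated by elements $(g,1)$ and $(1,\gamma)$, which is exactly your computation via $(g,\gamma)\cdot m=g\cdot(\gamma\cdot m)$, and the other direction is immediate by restriction in both treatments. No issues.
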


\begin{proof}
  The ``if'' part is clear. The ``only if'' part follows because
  the group $\hat{G}$ is generated by elements of the form $(g,1)$ and
  $(1,\gamma)$.
\end{proof}

We recall the following elementary fact about group
actions.

\begin{proposition}
Let $\hat{G}$ be a group given as an extension
\begin{displaymath}
  1 \to G \to \hat{G} \to \Gamma \to 1.
\end{displaymath}
Suppose $\hat{G}$ acts on a set $M$. Then there is an induced
action of $\Gamma$ on the orbit space $M/G$ and the quotient map
  $M\to M/G$ is equivariant for the homomorphism
  $\hat{G}\to\Gamma$. \qed
\end{proposition}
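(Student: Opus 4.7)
The plan is to construct the $\Gamma$-action on $M/G$ by lifting along the surjection $q\colon\hat G\to\Gamma$ and then to check that every choice in the construction is harmless.

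First I would fix notation: let $\pi\colon M\to M/G$ denote the quotient map, and for $m\in M$ write $[m]=\pi(m)=G\cdot m$. Given $\gamma\in\Gamma$ and $[m]\in M/G$, choose any $\hat g\in\hat G$ with $q(\hat g)=\gamma$ and set
\begin{equation*}
  \gamma\cdot[m]:=[\hat g\cdot m].
\end{equation*}
The heart of the argument is showing this is independent of the two choices. For the choice of representative $m$: any other representative has the form $g\cdot m$ with $g\in G$, and since $G\trianglelefteq\hat G$ we may write $\hat g g = g'\hat g$ for some $g'\in G$, giving $\hat g\cdot(g\cdot m)=g'\cdot(\hat g\cdot m)\in G\cdot(\hat g\cdot m)$, so the orbit $[\hat g\cdot m]$ is unchanged. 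For the choice of lift: any other lift is of the form $\hat g\cdot g_0$ with $g_0\in G$, and then $(\hat g g_0)\cdot m=\hat g\cdot(g_0\cdot m)$, whose orbit equals $[\hat g\cdot m]$ by the previous argument (or directly, since $g_0\cdot m\in[m]$).

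Next I would verify the two axioms of a group action. The identity axiom $1\cdot[m]=[m]$ is immediate by taking $\hat g=1\in\hat G$. For compatibility with composition, given $\gamma_1,\gamma_2\in\Gamma$ pick lifts $\hat g_1,\hat g_2$; then $\hat g_1\hat g_2$ is a lift of $\gamma_1\gamma_2$, so
\begin{equation*}
  (\gamma_1\gamma_2)\cdot[m]=[(\hat g_1\hat g_2)\cdot m]=[\hat g_1\cdot(\hat g_2\cdot m)]=\gamma_1\cdot[\hat g_2\cdot m]=\gamma_1\cdot(\gamma_2\cdot[m]).
\end{equation*}

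Finally, equivariance of $\pi$ along $q$ is tautological from the definition: for any $\hat g\in\hat G$ and $m\in M$,
\begin{equation*}
  \pi(\hat g\cdot m)=[\hat g\cdot m]=q(\hat g)\cdot[m]=q(\hat g)\cdot\pi(m).
\end{equation*}
The only substantive point is the well-definedness step, which uses nothing beyond the normality of $G$ in $\hat G$; I do not anticipate any genuine obstacle.
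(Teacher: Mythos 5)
Your proof is correct and complete; the paper states this proposition as an elementary fact with no written proof, and your argument (define the action via an arbitrary lift along $\hat G\to\Gamma$, check independence of choices using normality of $G$, then verify the axioms) is exactly the standard argument the paper implicitly relies on.
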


As an immediate consequence of this and
Proposition~\ref{prop:-G-Gamma-twisted-actions} we get the following result.

\begin{proposition}
  \label{prop:quotient-Gamma-action}
  Let $\hat{G}=G\times_{(\theta,c)}\Gamma$ be as above. Suppose we
  have a $(\theta,c)$-twisted $(G,\Gamma)$-action on $M$. Then the quotient
  of $M$ by $G$ has a natural $\Gamma$-action and the quotient map
  $M\to M/G$ is equivariant for the homomorphism
  $\hat{G}\to\Gamma$. \qed
\end{proposition}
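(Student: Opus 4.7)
The plan is to deduce this proposition directly from the two results immediately preceding it, avoiding any direct manipulation of the twisted action data. The first of these is Proposition \ref{prop:-G-Gamma-twisted-actions}, which provides a bijective correspondence between $(\theta,c)$-twisted $(G,\Gamma)$-actions on $M$ and honest $\hat{G}$-actions on $M$. The second is the elementary fact, stated just before, that whenever a group $\hat{G}$ arising as an extension $1\to G\to\hat{G}\to\Gamma\to 1$ acts on a set $M$, the orbit space $M/G$ inherits a natural $\Gamma$-action for which the quotient map $M\to M/G$ is equivariant with respect to $\hat{G}\to\Gamma$.

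Concretely, I would proceed as follows. First, given the $(\theta,c)$-twisted $(G,\Gamma)$-action on $M$, invoke Proposition \ref{prop:-G-Gamma-twisted-actions} to obtain the corresponding $\hat{G}$-action on $M$, where $\hat{G}=G\times_{(\theta,c)}\Gamma$. Second, apply the preceding proposition about extensions to this $\hat{G}$-action: since $G\subseteq\hat{G}$ is a normal subgroup with quotient $\Gamma$, the set $M/G$ of $G$-orbits carries a canonical $\Gamma$-action, and the quotient map $M\to M/G$ intertwines the $\hat{G}$-action on $M$ with the $\Gamma$-action on $M/G$ via the surjection $q\colon\hat{G}\to\Gamma$. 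Third, observe that under the bijection of Proposition \ref{prop:-G-Gamma-twisted-actions}, the restriction of the $\hat{G}$-action to $G\subseteq\hat{G}$ is precisely the original $G$-action on $M$, so the orbit space $M/G$ is the same whether we view $M$ as carrying the twisted action or the $\hat{G}$-action. This gives the claimed $\Gamma$-action on $M/G$ and the equivariance of $M\to M/G$.

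There is really no obstacle of substance here; the content of the proposition is entirely packaged into the two results cited. The only thing worth spelling out explicitly, if desired, is that the induced $\Gamma$-action on $M/G$ is given on representatives by $\gamma\cdot [m]=[\gamma\cdot m]$, which is well-defined because condition (\textit{ii}) of Definition \ref{def:G-Gamma-twisted-action} implies $\gamma\cdot(g\cdot m)=g^\gamma\cdot(\gamma\cdot m)$ lies in the same $G$-orbit as $\gamma\cdot m$, and that the cocycle $c$ takes values in $Z(G)\subseteq G$ so that condition (\textit{iii}) reduces modulo $G$ to the genuine group law $\gamma_1\cdot(\gamma_2\cdot[m])=(\gamma_1\gamma_2)\cdot[m]$. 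Thus the proof is a one-line invocation of the two preceding results, and I would present it as such.
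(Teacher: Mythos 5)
Your proposal is correct and follows exactly the paper's own route: the paper states this proposition as an immediate consequence of the elementary fact about extensions acting on sets together with Proposition~\ref{prop:-G-Gamma-twisted-actions}, which is precisely your two-step argument. The extra explicit check of well-definedness via condition (\textit{ii}) is a harmless (and reasonable) addition.
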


The following lemma will be crucial later, to prove
Proposition~\ref{prop:EM-Gamma-euivariant-sections}. 

\begin{lemma}
  \label{lem:twisted-equivariant-sections-equivalence}
Let $\hat{G}$ be a group given as an extension
\begin{displaymath}
  1 \to G \to \hat{G} \to \Gamma \to 1.
\end{displaymath}
Suppose that $\hat{G}$ acts on sets $E$ and $M$ on the right and that
the induced $G$-action on $E$ is free. Let $\tilde{s}\colon E\to M$ be
a $G$-equivariant map and let $s\colon E/G\to (E\times M)/G$ be the
map induced by the $G$-equivariant map
$(\Id,\tilde{s})\colon E\to E\times M$. Then the map $\tilde{s}$ is
$\hat{G}$-equivariant if and only if the map $s$ is
$\Gamma$-equivariant.
\end{lemma}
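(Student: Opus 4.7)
The plan is to unpack both $\Gamma$-actions explicitly, observe that the forward direction is a direct substitution, and then exploit freeness of the $G$-action on $E$ in the reverse direction.

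First I would describe the actions appearing in the statement concretely. By the proposition immediately preceding the lemma, for any lift $\hat\gamma\in\hat G$ of $\gamma\in\Gamma$ the formula $[e]\cdot\gamma := [e\cdot\hat\gamma]$ gives a well-defined $\Gamma$-action on $E/G$, and likewise $[e,m]\cdot\gamma := [e\cdot\hat\gamma, m\cdot\hat\gamma]$ gives a $\Gamma$-action on $(E\times M)/G$ (with the diagonal $\hat G$-action on $E\times M$). The map $s$ is $[e]\mapsto[(e,\tilde s(e))]$, which is well-defined precisely because $\tilde s$ is $G$-equivariant.

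For the ``only if'' direction, I would compute directly: if $\tilde s$ is $\hat G$-equivariant and $\hat\gamma\in\hat G$ lifts $\gamma$, then
\begin{equation*}
  s([e]\cdot\gamma)=s([e\cdot\hat\gamma])
  =[(e\cdot\hat\gamma,\tilde s(e\cdot\hat\gamma))]
  =[(e\cdot\hat\gamma,\tilde s(e)\cdot\hat\gamma)]
  =[(e,\tilde s(e))]\cdot\gamma
  =s([e])\cdot\gamma.
\end{equation*}

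For the ``if'' direction, which I expect to be the only step with any subtlety, I would reverse this chain. Since $\tilde s$ is already $G$-equivariant, it suffices to verify $\tilde s(e\cdot\hat\gamma)=\tilde s(e)\cdot\hat\gamma$ for every $e\in E$ and every $\hat\gamma\in\hat G$. From $\Gamma$-equivariance of $s$ we have
\begin{equation*}
  [(e\cdot\hat\gamma,\,\tilde s(e\cdot\hat\gamma))]
  = s([e]\cdot\gamma)
  = [(e\cdot\hat\gamma,\,\tilde s(e)\cdot\hat\gamma)]
\end{equation*}
in $(E\times M)/G$. Hence there exists $g\in G$ with $(e\cdot\hat\gamma\cdot g,\tilde s(e\cdot\hat\gamma)\cdot g)=(e\cdot\hat\gamma,\tilde s(e)\cdot\hat\gamma)$. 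The first coordinate equation, together with the hypothesis that $G$ acts freely on $E$, forces $g=1$, and then the second coordinate gives the required identity $\tilde s(e\cdot\hat\gamma)=\tilde s(e)\cdot\hat\gamma$.

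The main (very mild) obstacle is the last step: without the freeness assumption, the equality in $(E\times M)/G$ would only force the two representatives to lie in the same $G$-orbit, not to agree on the nose, so we could not recover honest $\hat G$-equivariance of $\tilde s$. Apart from that caveat the argument is purely formal, and combining $G$-equivariance with the identity just derived for an arbitrary lift $\hat\gamma$ yields $\hat G$-equivariance of $\tilde s$, since $G$ and any set of lifts of $\Gamma$ generate $\hat G$.
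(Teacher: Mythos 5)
Your proof is correct and takes essentially the same approach as the paper's: compute $s([e]\cdot\gamma)$ and $s([e])\cdot\gamma$ via a lift $\hat\gamma\in\hat G$ and use freeness of the $G$-action on $E$ to identify the two representatives in $(E\times M)/G$. You merely spell out the freeness step (forcing $g=1$) a bit more explicitly than the paper does.
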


\begin{proof}
  Let $\hat{g}\in\hat{G}$ and write
  $\gamma\in\Gamma$ for the image of $\hat{g}$ under the map $\hat{G} \to \Gamma$. Then, for $e\in E$, we have
  \begin{displaymath}
    s([e]\cdot\gamma) = s([e\cdot\hat{g}])
      =[(e\cdot\hat{g},\tilde{s}(e\cdot\hat{g}))].
  \end{displaymath}
  On the other hand
  \begin{displaymath}
    s([e]) \cdot\gamma
    =[(e,\tilde{s}(e))]\cdot\gamma
    =[(e\cdot\hat{g},\tilde{s}(e)\cdot\hat{g})].
  \end{displaymath}
  Now, since $G$ acts freely on $E$, we see that
  $s([e]\cdot\gamma) = s([e]) \cdot\gamma$ if and only if
  \begin{math}
    \tilde{s}(e\cdot\hat{g}) = \tilde{s}(e)\cdot\hat{g}.
  \end{math}
\end{proof}

\subsection{Twisted actions for different
  $\theta$.}\label{section-different-theta}

Up to this point we have worked with a fixed choice of lift $\theta\colon\Gamma\to\Aut(G)$
of the characteristic homomorphism
$\bar{\theta}\colon\Gamma\to\Out(G)=\Aut(G)/\Int(G)$ (see \eqref{lift}).
In this section we show that a different choice of lift leads to essentially
the same theory (see Proposition \ref{prop-different-theta} below).

Given another homomorphism $\theta':\Gamma\to\Aut(G)$ lifting $\bar{\theta}$, there exists a map $s:\Gamma\to G$ such that $\theta'=\Int_s\theta$; we may assume that $s(1)=1$. Here $\Int_s$ is the homomorphism of $\Gamma$ to $\Int(G)$ defined by
$\Int_s(\gamma)=\Int_{s(\gamma)}$. Since $\Int(G)$ acts trivially on $Z$, there is a natural identification $Z^2_{\theta}(\Gamma,Z)\cong Z^2_{\theta'}(\Gamma,Z)$ and so we talk about 2-cocycles as elements in any of these sets indistinctly. Since $\theta'$ is a homomorphism we have
\begin{align*}
    \Int_{s(\gamma)}\Int_{\tg(s(\gamma'))}\theta_{\gamma\gamma'}=\Int_{s(\gamma)}\tg\Int_{s(\gamma')}\theta_{\gamma'}=
    \theta'_{\gamma}\theta'_{\gamma'}=\theta'_{\gamma\gamma'}=\Int_{s(\gamma\gamma')}\theta_{\gamma\gamma'},
\end{align*}
i.e.
\begin{equation}\label{eq-ints-cocycle}
    \Int_{s(\gamma)}\Int_{\tg(s(\gamma'))}=\Int_{s(\gamma\gamma')}
\end{equation}
for every $\gamma$ and $\gamma'\in\Gamma$. In terms of Galois cohomology this means that $\Int_s$ is a 1-cocycle in $Z^1_{\theta}(\Gamma,\Int(G))$. Thus we may define a map
\begin{equation*}
    c_s:\Gamma\to\Gamma\to Z;\,(\gamma,\gamma')\mapsto s(\gamma)\tg(s(\gamma'))s(\gamma\gamma')^{-1}.
\end{equation*}
This is in fact a 2-cocycle, since
\begin{align*}
    \theta_{\gamma_0}(c_s(\gamma_1,\gamma_2))c_s(\gamma_0,\gamma_1\gamma_2)=
    \theta_{\gamma_0}( s(\gamma_1)\theta_{\gamma_1}(s(\gamma_2))s(\gamma_1\gamma_2)^{-1}) s(\gamma_0)\theta_{\gamma_0}(s(\gamma_1\gamma_2))s(\gamma_0\gamma_1\gamma_2)^{-1}=\\
    \theta_{\gamma_0}( s(\gamma_1))\theta_{\gamma_0\gamma_1}(s(\gamma_2)) s(\gamma_0)s(\gamma_0\gamma_1\gamma_2)^{-1}=\\
    s(\gamma_0)\theta_{\gamma_0}(s(\gamma_1))s(\gamma_0\gamma_1)^{-1}s(\gamma_0\gamma_1)\theta_{\gamma_0\gamma_1}(s(\gamma_2))s(\gamma_0\gamma_1\gamma_2)^{-1}
  =c_s(\gamma_0,\gamma_1)c_s(\gamma_0\gamma_1,\gamma_2)
\end{align*}
and
$$c_s(\gamma,1)=s(\gamma)\tg(s(1))s(\gamma)^{-1}=1=s(1)\theta_1(s(\gamma))s(\gamma)^{-1}=c_s(1,\gamma).$$
Thus the product $cc_s$ is also a 2-cocycle.

We have the following.

\begin{proposition}\label{prop-different-theta}
Let $s:\Gamma\to\Int(G)$ be a map satisfying (\ref{eq-ints-cocycle}). Let $\cc{}(\theta,c)$ be the category of pairs $(M,\cdot)$ consisting of a set $M$ and a $(\theta,c)$-twisted $(G,\Gamma)$-action on $M$, whose morphisms are $(\theta,c)$-twisted $(G,\Gamma)$-equivariant maps. Then the categories $\cc{}(\theta,c)$ and $\cc{}(\Int_s\theta,cc_s)$ are equivalent.
\end{proposition}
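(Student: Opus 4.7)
The plan is to reduce Proposition~\ref{prop-different-theta} to an isomorphism of the associated group extensions and then invoke Propositions~\ref{prop:-G-Gamma-twisted-actions} and~\ref{prop:twisted-equivariant-hat} to transfer this isomorphism into an equivalence of categories of twisted actions. Set $\theta':=\Int_s\theta$, $\hat{G}:=G\times_{(\theta,c)}\Gamma$, and $\hat{G}':=G\times_{(\theta',cc_s)}\Gamma$. I will exhibit a group isomorphism $\varphi:\hat{G}'\xra{\cong}\hat{G}$ defined by $\varphi(g,\gamma)=(gs(\gamma),\gamma)$; this map restricts to the identity on the subgroup $G$ and covers the identity on $\Gamma$, so it will automatically be an isomorphism of the two extensions once it is shown to be a group homomorphism.

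Checking that $\varphi$ is a homomorphism is the computational core of the argument. Expanding $(g_1,\gamma_1)\cdot(g_2,\gamma_2)$ in $\hat{G}'$ according to the product rule determined by $\theta'$ and $cc_s$, then applying $\varphi$, and comparing with $(g_1 s(\gamma_1),\gamma_1)\cdot(g_2 s(\gamma_2),\gamma_2)$ computed in $\hat{G}$, the required equality reduces to substituting the identities $\theta'_{\gamma_1}(g_2)=s(\gamma_1)\theta_{\gamma_1}(g_2)s(\gamma_1)^{-1}$ and $c_s(\gamma_1,\gamma_2)=s(\gamma_1)\theta_{\gamma_1}(s(\gamma_2))s(\gamma_1\gamma_2)^{-1}$, and then commuting the central factor $c(\gamma_1,\gamma_2)\in Z(G)$ past the $s(\gamma_1)^{\pm 1}$ that surround it. Bijectivity of $\varphi$ is immediate from the explicit inverse $(h,\gamma)\mapsto(h s(\gamma)^{-1},\gamma)$.

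With the isomorphism in hand, the equivalence of categories is essentially formal. By Proposition~\ref{prop:-G-Gamma-twisted-actions}, the objects of $\cc{}(\theta,c)$ and $\cc{}(\theta',cc_s)$ are respectively $\hat{G}$-sets and $\hat{G}'$-sets, and by Proposition~\ref{prop:twisted-equivariant-hat} their morphisms are the corresponding equivariant maps. Pulling back along $\varphi$ therefore defines a functor $F_s:\cc{}(\theta,c)\to\cc{}(\theta',cc_s)$ which, on a left action, keeps the $G$-action unchanged and replaces the $\Gamma$-action by $\gamma\star m:=s(\gamma)\cdot(\gamma\cdot m)$, with the analogous modification for right actions, and which is the identity on underlying maps of morphisms. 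Since $\varphi$ is invertible as a group isomorphism, $F_s$ is invertible as a functor, and in particular an equivalence.

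The main obstacle I expect is the bookkeeping in the homomorphism verification for $\varphi$: the inner twist $\Int_s$ absorbed into $\theta'$ and the cocycle correction $c_s$ upgrading $c$ to $cc_s$ must conspire to balance, and it is precisely at this point that the defining formula for $c_s$ and the centrality of the values of $c$ are used in an essential way. Once that compatibility is checked, the passage from an isomorphism of extensions to an equivalence of twisted-action categories is purely formal via the two cited propositions. As a sanity check, condition~(iii) in Definition~\ref{def:G-Gamma-twisted-action} for the transported action $\star$ reads $\gamma_1\star(\gamma_2\star m)=(cc_s)(\gamma_1,\gamma_2)\cdot((\gamma_1\gamma_2)\star m)$, and expanding both sides using (ii)--(iii) for the original action and the centrality of $c(\gamma_1,\gamma_2)$ leads to the same expression $c(\gamma_1,\gamma_2)s(\gamma_1)\theta_{\gamma_1}(s(\gamma_2))\cdot((\gamma_1\gamma_2)\cdot m)$, confirming the definition of $c_s$.
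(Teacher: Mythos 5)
Your proof is correct, but it takes a different route from the paper's. The paper proves the proposition by bare hands: it writes down the transported action $m*\gamma:=ms(\gamma)\cdot\gamma$ on each object and directly verifies conditions (\textit{ii}) and (\textit{iii}) of Definition~\ref{def:G-Gamma-twisted-action} for $(\Int_s\theta,cc_s)$, then checks that every $(\theta,c)$-equivariant map is automatically $(\Int_s\theta,cc_s)$-equivariant, and observes the functor is invertible by multiplying by $s(\bullet)^{-1}$. You instead package the same computation one level up: you show that $(g,\gamma)\mapsto(gs(\gamma),\gamma)$ is an isomorphism of extensions $G\times_{(\Int_s\theta,\,cc_s)}\Gamma\xra{\cong}G\times_{(\theta,c)}\Gamma$ (the verification uses exactly the defining formula for $c_s$, the relation $s(\gamma_1)\theta_{\gamma_1}(g)=\theta'_{\gamma_1}(g)s(\gamma_1)$, and centrality of $c$ and $c_s$ --- note that $c_s$ lands in $Z(G)$ precisely because of \eqref{eq-ints-cocycle}), and then transport the equivalence through Propositions~\ref{prop:-G-Gamma-twisted-actions} and \ref{prop:twisted-equivariant-hat}, which identify each category of twisted actions with the category of sets acted on by the corresponding extension. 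The two arguments are essentially the same calculation in different clothing --- your transported action $\gamma\star m=s(\gamma)\cdot(\gamma\cdot m)$ (resp.\ $m\star\gamma=(ms(\gamma))\cdot\gamma$) coincides with the paper's --- but your version is more structural: it isolates the real content (cohomologous data give isomorphic extensions, hence isomorphic action categories) and reuses machinery the paper has already established, whereas the paper's direct verification is self-contained and does not depend on the discreteness hypothesis under which Proposition~\ref{prop:-G-Gamma-twisted-actions} is stated (immaterial here, since only set-level actions are involved).
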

\begin{proof}
Set $\theta':=\Int_s\theta$ as above. Let $M$ be a set equipped with a $(\theta,c)$-twisted $(G,\Gamma)$-action, which we denote with a dot. We have a natural choice of $(\theta',cc_s)$-twisted $\Gamma$-equivariant action on $M$, namely
\begin{equation}
    M\times\Gamma\to M;\,(m,\gamma)\mapsto m*\gamma:=ms(\gamma)\cdot\gamma.
\end{equation}
This satisfies Definition \ref{def:G-Gamma-twisted-action}:
\begin{equation*}
    (mg)*\gamma=(ps(\gamma)s(\gamma)^{-1}gs(\gamma))\cdot\gamma=(ms(\gamma))\cdot\gamma\theta_{\gamma}^{-1}(s(\gamma)^{-1}gs(\gamma))=m*\gamma\theta'^{-1}_{\gamma}(g)
\end{equation*}
and
\begin{align*}
    (m*\gamma)*\gamma'=(ms(\gamma)\cdot\gamma)s(\gamma')\cdot\gamma'=((m\cdot\gamma)\cdot\gamma')\theta_{\gamma\gamma'}^{-1}(s(\gamma))\theta_{\gamma'}^{-1}(s(\gamma'))=\\
    ((mc(\gamma,\gamma'))\cdot\gamma\gamma')\theta_{\gamma\gamma'}^{-1}(s(\gamma)\theta_{\gamma}(s(\gamma')))=
    ((mc(\gamma,\gamma'))\cdot\gamma\gamma')\theta_{\gamma\gamma'}^{-1}(c_s(\gamma,\gamma')s(\gamma\gamma'))=\\
    (mc(\gamma,\gamma')c_s(\gamma,\gamma')s(\gamma\gamma'))\cdot\gamma\gamma'=
    (mc_s(\gamma,\gamma'))*\gamma\gamma'
\end{align*}
for each $g\in G$, $\gamma,\gamma'\in\Gamma$ and $m\in M$.

A $(\theta,c)$-twisted $(G,\Gamma)$-equivariant morphism of sets $f:M\to M'$ is also $(\theta',cc_s)$-twisted $\Gamma$-equivariant. Indeed, we have
\begin{equation*}
    f(m*\gamma)=f(ms(\gamma)\cdot\gamma)=f(m\cdot\gamma)\tg^{-1}(s(\gamma))=f(m)\cdot\gamma\tg^{-1}(s(\gamma))=f(m)s(\gamma)\cdot\gamma=f(m)*\gamma
\end{equation*}
for each $m\in M$ and $\gamma\in\Gamma$.

Finally it is clear that the functor is invertible, namely we get the $\cdot$ action by composing the $*$ action with multiplication by $s(\bullet)^{-1}$.
\end{proof}
\color{black}

\section{Twisted equivariant structures on bundles}
\label{sec:twist}

In this section  we review and further develop the theory of twisted equivariant
bundles introduced in  \cite{garcia-prada-wilkin,basu-garcia-prada}
(see also \cite{BG,BGH1,BGH2,BCFG,BCG}).
We work in the smooth category
for definiteness. However, everything could equally well be done in the topological and the
holomorphic categories. Let  $X$ be a smooth manifold (a topological or
complex manifold if we work in the topological or holomorphic category) and $G$ be a Lie
group (topological or complex if we work in the topological or holomorphic category).

\subsection{Twisted equivariant bundles}
\label{sec:twisted-bundles}

Let $\Gamma$ be a discrete group, and
let $G$ be a Lie group with centre $Z(G)=Z$. As in
Sections~\ref{sec:extensions-lie} and \ref{sec:actions-extensions-lie}
assume that $\Gamma$ acts on $G$ via $\theta\colon\Gamma\to\Aut(G)$
and let
\begin{math}
  c\in Z^2_{\theta}(\Gamma,Z)
\end{math}
be a $2$-cocycle for this action. Let $\hat{G}=G\times_{(\theta,c)}\Gamma$
be the extension of $\Gamma$ by $G$ defined by $\theta$ and $c$, and
write $\hat{\Gamma}_{\theta,c}$ for the corresponding central
extension of $\Gamma$ by $Z$.

\begin{definition}
  \label{twisted-G-action}
  A \textbf{$(\theta,c)$-twisted $(G,\Gamma)$-manifold} is a smooth
  manifold $M$ with a $(\theta,c)$-twisted $(G,\Gamma)$-action such
  that the maps defined by each $g\in G$ and $\gamma\in\Gamma$ are
  diffeomorphisms of $M$.
\end{definition}

In the case when $G=Z$, we simply use the expression
\textbf{$(\theta,c)$-twisted $\Gamma$-manifold} for a
$(\theta,c)$-twisted $(Z,\Gamma)$-manifold. If we want to specify
whether the action is on the right or on the left, we qualify the word
``manifold'' with the corresponding adjective.

\begin{remark}
  \label{rem-GGamma-Ghat-manif-eq}
  From Propositions~\ref{prop:-G-Gamma-twisted-actions} and
  \ref{prop:twisted-equivariant-hat} we see that the category of  $(\theta,c)$-twisted
  $(G,\Gamma)$-manifolds is equivalent to the category of
  $\hat{G}$-manifolds (with the obvious notions of morphisms).
\end{remark}

Let $X$ be a $\Gamma$-manifold.

\begin{definition}
  \label{def:gamma-twisted-bundle}
  A \textbf{$(\theta,c)$-twisted $\Gamma$-equivariant bundle on $X$}
  is a fibre bundle $F\to X$ with a $(\theta,c)$-twisted action of
  $\Gamma$ on the total space $F$, such that $Z$ acts fibrewise and
  the projection $\pi\colon F\to X$ is \textbf{$\Gamma$-equivariant},
  in the sense that, in the case of a right action,
  \begin{math}
    \pi(f\cdot\gamma) = \pi(f)\cdot\gamma,
  \end{math}
  and, in the case of a left action,
  \begin{math}
    \pi(\gamma\cdot f) = \gamma\cdot\pi(f)
  \end{math}
  for $\gamma\in\Gamma$ and $f\in F$.
\end{definition}

\begin{remark}
  In view of Proposition~\ref{prop:-G-Gamma-twisted-actions} we see
  that a $(\theta,c)$-twisted $\Gamma$-equivariant bundle on $X$ is
  the same thing as a bundle $F\to X$ with a
  $\hat{\Gamma}_{\theta,c}$-action on $F$, such that the projection is
  equivariant for the homomorphism $\hat{\Gamma}_{\theta,c}\to\Gamma$.
\end{remark}

Let $X$ be a right $\Gamma$-manifold.

\begin{definition}\label{def:gamma-twisted-principal-bundle}
  A \textbf{$(\theta,c)$-twisted $\Gamma$-equivariant principal
    $G$-bundle on $X$} is a right $(\theta,c)$-twisted
  $\Gamma$-equivariant bundle $E\to X$ with a right $G$-action such that
  \begin{enumerate}
  \item the actions of $G$ and $\Gamma$ make $E$ into a
    $(\theta,c)$-twisted $(G,\Gamma)$-manifold, and
  \item the action of $G$ makes $E\to X$ into a principal $G$-bundle.
  \end{enumerate}
\end{definition}

Explicitly, the preceding definition means that there is smooth a
$(\theta,c)$-twisted right $\Gamma$-action on the principal $G$-bundle
$E$, covering the $\Gamma$-action on $X$, and such that
\begin{align}\label{eq:twist-gamma-1}
  (eg^{\gamma})\cdot\gamma &= (e\cdot\gamma)g, \\
  (e\cdot \gamma_1)\cdot \gamma_2 &=
  (ec(\gamma_1,\gamma_2))\cdot(\gamma_1\gamma_2).
\label{eq:twist-gamma-2}
\end{align}
Note that, in accordance with custom, we usually omit the ``$\cdot$''
in the notation for the right action of $G$ on $E$.

\begin{remark}
  In view of Proposition~\ref{prop:-G-Gamma-twisted-actions} we have
  the following alternative definition: a $(\theta,c)$-twisted
  $\Gamma$-equivariant principal $G$-bundle on a right
  $\Gamma$-manifold $X$ is a right $\hat{G}$-manifold $E$ together
  with a smooth map $\pi\colon E\to X$ which is equivariant for the
  homomorphism $\hat{G}\to\Gamma$, and defines a structure of
  principal $G$-bundle on $E$ over $X$.
\end{remark}

\begin{remark}
  If $\Gamma$ acts trivially on $X$, a $(\theta,c)$-twisted
  $\Gamma$-equivariant principal $G$-bundle on $X$ is the same thing
  as a principal $\hat{G}$-bundle on $X$.
\end{remark}

The next definition completes the description of the \textbf{category
  of $(\theta,c)$-twisted $\Gamma$-equivariant principal $G$-bundles}.

\begin{definition}
  \label{def:morphism-twq}
  A \textbf{morphism} between $(\theta,c)$-twisted
  $\Gamma$-equivariant principal $G$-bundles is a commutative diagram
\begin{displaymath}
  \xymatrix{
    E \ar[r]^{\tilde{f}} \ar[d] & F \ar[d] \\
    X \ar[r]^f & Y
  }
\end{displaymath}
where $f\colon X\to Y$ is $\Gamma$-equivariant, and
$\tilde{f}\colon E\to F$ is $(\theta,c)$-twisted
$(G,\Gamma)$-equivariant, i.e.,
$\tilde{f}(e\cdot\gamma)= \tilde{f}(e)\cdot\gamma$ and
$\tilde{f}(eg)=\tilde{f}(e)g$ for $e\in E$, $\gamma\in\Gamma$ and
$g\in G$.
\end{definition}

\begin{remark}
In the preceding definition, in view of
Proposition~\ref{prop:twisted-equivariant-hat}, we may equivalently
require $\tilde{f}$ to be $\hat{G}$-equivariant.
\end{remark}

\begin{remark}
  According to Definition~\ref{def:morphism-twq}, two
  $(\theta,c)$-twisted $\Gamma$-equivariant principal $G$-bundles on
  the same base $X$ are isomorphic if there is an isomorphism between
  them covering a $\Gamma$-equivariant automorphism of $X$. Below we
  shall use notions of isomorphism covering more restricted classes of
  $\Gamma$-equivariant automorphisms of $X$, e.g., the identity, or
  the action of $Z(\Gamma)$.
\end{remark}

Proposition \ref{prop-different-theta} implies the following.
\begin{proposition}
Let $s:\Gamma\to G$ be a map making $\Int_s\theta:\Gamma\to \Aut(G)$ a homomorphism. Let $c_s\in Z^2_{\theta}(\Gamma,Z)$ be the 2-cocycle defined in Section \ref{section-different-theta}. Then the category of $(\theta,c)$-twisted $\Gamma$-equivariant $G$-bundles on is equivalent to the category of $(\Int_s\theta,cc_s)$-twisted $\Gamma$-equivariant $G$-bundles on $X$.
\end{proposition}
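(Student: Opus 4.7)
The plan is to reduce the claim directly to Proposition \ref{prop-different-theta}, which already supplies the needed set-theoretic equivalence of $(G,\Gamma)$-actions. What remains is to verify that this functor lifts to the bundle setting, i.e.\ that the reassigned $\Gamma$-action is smooth, covers the (unchanged) $\Gamma$-action on $X$, and is compatible with the principal $G$-bundle structure.

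First I would define the functor on objects. Given a $(\theta,c)$-twisted $\Gamma$-equivariant principal $G$-bundle $\pi\colon E\to X$, with twisted action denoted by a dot as in Proposition \ref{prop-different-theta}, I define a new right $\Gamma$-action on $E$ by
\begin{equation*}
 e*\gamma := (e\,s(\gamma))\cdot\gamma.
\end{equation*}
Proposition \ref{prop-different-theta}, applied to the total space $E$ viewed as a set equipped with its right $G$-action and its $(\theta,c)$-twisted right $\Gamma$-action, says that $*$ is a $(\Int_s\theta,cc_s)$-twisted $(G,\Gamma)$-action. Smoothness is immediate: the map $e\mapsto (e\,s(\gamma))\cdot\gamma$ is the composition of right multiplication by the fixed element $s(\gamma)\in G$ (smooth because the principal $G$-action is smooth) with the original smooth action of $\gamma$.

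Second I would check compatibility with the projection. Since $G$ acts fibrewise and $\pi$ is $\Gamma$-equivariant for the old action,
\begin{equation*}
 \pi(e*\gamma)=\pi\bigl((e\,s(\gamma))\cdot\gamma\bigr)=\pi(e\,s(\gamma))\cdot\gamma=\pi(e)\cdot\gamma,
\end{equation*}
so $\pi$ is $\Gamma$-equivariant for the $\Gamma$-action on $X$, which is untouched. Because the $G$-action itself is unchanged, $\pi\colon E\to X$ is still a principal $G$-bundle, and the verification of \eqref{eq:twist-gamma-1}--\eqref{eq:twist-gamma-2} (with $\theta$, $c$ replaced by $\Int_s\theta$, $cc_s$) is exactly the content of the two displayed identities in the proof of Proposition \ref{prop-different-theta}. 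Hence the starred data promote $E$ to a $(\Int_s\theta,cc_s)$-twisted $\Gamma$-equivariant principal $G$-bundle over $X$.

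Third, for morphisms, I would observe that a morphism $\widetilde f\colon E\to F$ in the source category covering $f\colon X\to Y$ is $G$-equivariant and $(\theta,c)$-twisted $\Gamma$-equivariant; the second equivariance property, by the morphism-level statement inside the proof of Proposition \ref{prop-different-theta}, automatically passes to the starred actions, so $\widetilde f$ is a morphism in the target category covering the same map $f$. Finally, the inverse functor is obtained by the symmetric construction in which $s$ is replaced by the pointwise inverse, as flagged at the end of the proof of Proposition \ref{prop-different-theta}; both round trips yield the identity on total spaces and on the $G$-action, and agree on $\Gamma$-actions by a direct computation. I do not expect any serious obstacle: the only content beyond Proposition \ref{prop-different-theta} is the smoothness of $e\mapsto e\,s(\gamma)$ and the equivariance of $\pi$ under the new action, both of which are immediate.
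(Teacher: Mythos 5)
Your proposal is correct and takes essentially the same route as the paper, which simply asserts that the statement follows from Proposition~\ref{prop-different-theta}; you apply that proposition to the total space and fill in the only genuinely bundle-theoretic points (smoothness of $e\mapsto (e\,s(\gamma))\cdot\gamma$, fibrewise compatibility of $\pi$, and the passage of morphisms), all of which check out. The inverse functor via $s(\bullet)^{-1}$ is exactly the one flagged at the end of the paper's proof of Proposition~\ref{prop-different-theta}, and indeed $c_{s^{-1}}=c_s^{-1}$, so the round trips are identities.
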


\color{black}
We can define the pull-back of a $(\theta,c)$-twisted $\Gamma$-equivariant $G$-bundle
$E\xra{\pi} Y$ under a $\Gamma$-equivariant map $f\colon X\to Y$ in the usual way
and obtain a $(\theta,c)$-twisted $\Gamma$-equivariant $G$-bundle $f^*E\to
X$, as follows. Recall that the pull-back can be defined as
\begin{displaymath}
  f^*E=\{(x,e)\in X\times E \suchthat f(x)=\pi(e)\},
\end{displaymath}
and this is principal $G$-bundle in a natural way.
We define the twisted $\Gamma$-action by coordinate-wise multiplication
\begin{displaymath}
  (x,e)\cdot\gamma = (x\cdot\gamma,e\cdot\gamma)
\end{displaymath}
and one checks that $f^*E\to X$ is indeed a $(\theta,c)$-twisted $\Gamma$-equivariant
bundle. Moreover, the natural bundle map $f^*E\to E$
covering $f\colon X\to Y$ is a morphism of $(\theta,c)$-twisted
$\Gamma$-equivariant $G$-bundles.


\begin{remark}
  If $f=\eta_\gamma\colon X\to X,\; x\mapsto x\cdot \gamma^{-1}$
  corresponds to the action of $\gamma\in\Gamma$, then we have a
  natural isomorphism of $(\theta,c)$-twisted $\Gamma$-equivariant $G$-bundles
  \begin{displaymath}
    \eta_\gamma^*E\xra{\cong}E
  \end{displaymath}
  covering $\eta_\gamma\colon X\to X$.
  Note that the action of $\Gamma$ on $X$ making
  $\eta_\gamma^*E$ into a twisted $\Gamma$-equivariant bundle is given
  as follows: an element $\lambda\in\Gamma$ acts by $x\mapsto
  x\cdot(\gamma^{-1}\lambda\gamma)$. Composing with the inverse of
  this isomorphism we see that
  $\tilde{\eta}_\gamma\colon E\to E,\;e\mapsto e\cdot\gamma^{-1}$
  covering $\eta_\gamma\colon X\to X$ corresponds to an isomorphism
  \begin{displaymath}
    E \xra{\cong} \eta_\gamma^*E
  \end{displaymath}
  of $(\theta,c)$-twisted $\Gamma$-equivariant $G$-bundles covering the identity.
\end{remark}

\begin{definition}\label{twisted-equivariant-section}
  A smooth section $s\colon X\to F$ of a twisted $\Gamma$-equivariant
  fibre bundle $F\to X$ is said to be \textbf{twisted
    $\Gamma$-equivariant} if it satisfies
  \begin{displaymath}
    s(x\cdot\gamma) = s(x)\cdot\gamma, \quad x\in X,\;\gamma\in\Gamma
  \end{displaymath}
  in the case of a right action,
  with the obvious modification in the case of a left action.
\end{definition}

\begin{remark}
 We can make $\hat{\Gamma}_{\theta,c}$ act on the
  space of sections of $F\to X$.
  For definiteness assume that the twisted
  $\Gamma$-action on $F\to X$ is on the right. Then we can define
\begin{align*}
  (s\cdot\gamma)(x) &= s(x\cdot \gamma^{-1})\cdot\gamma,\\
  (s\cdot z)(x) &= s(x)\cdot z
\end{align*}
for $\gamma\in\Gamma$, $z\in Z$, $x\in X$ and $s\in C^\infty(X,F)$.
It is simple to check
that this defines a twisted right
$\Gamma$-action on $C^\infty(X,F)$. In other words, we have a
$\hat\Gamma_{\theta,c}$-action on $C^\infty(X,F)$.
Moreover, $s$ is twisted $\Gamma$-equivariant if and only if $s\cdot\gamma
= s$ for all $\gamma\in\Gamma$.
\end{remark}

We denote the space of twisted $\Gamma$-equivariant smooth sections of
a twisted $\Gamma$-equivariant fibre bundle $F\to X$ by
$C^\infty(X,F)^{\Gamma}$. Of course this includes the case of a  $\Gamma$-twisted principal
$G$-bundle $E\xra{\pi}X$.

\subsection{Twisted equivariant structures and associated bundles}
\label{associated-bundles}

Let $E$ be a principal $G$-bundle over $X$ and $M$ be
any set on which $G$ acts. In the (most common) case, when the $G$-action on
$M$ is on the left, we convert it into a right action in the standard
way, by defining
\begin{displaymath}
  m\cdot g = g^{-1}\cdot m.
\end{displaymath}
We denote by $E(M)$ the orbit space $(E\times M)/G$. In the case
of a left action of $G$ on $M$, we thus have the twisted product
$E(M)=E\times_GM$, which can be viewed as the quotient of
$E\times M$ by the equivalence relation
\begin{equation}\label{equiv-associated}
  (eg, m)\sim(e,g\cdot m)
\end{equation}
for any
$e\in E$, $g\in G$ and $m\in M$.

\begin{remark}
  \label{rem:sections-equivariant-correspondence}
  The smooth sections $s$ of $E(M)$ are in natural bijection with the
  smooth maps $\tilde{s}:E\to M$ satisfying the $G$-equivariance
  condition
  \begin{equation}\label{antiequivariant}
    \tilde{s}(eg)=\tilde{s}(e)\cdot g \;\; \mbox{for any}\;\; e\in E\;\; \mbox{and} \;\; g\in G.
  \end{equation}
  To see this, let $\tilde{s}:E\to M$ satisfy
  \eqref{antiequivariant}. Then $(\Id,\tilde{s})\colon E\to E\times M$
  is $G$-equivariant and, therefore, descends to the quotient so that
  we have a commutative diagram
  \begin{displaymath}
    \begin{CD}
      E @>{(\Id,\tilde{s})}>> E\times M \\
      @VVV @VVV \\
      E/G @>{s}>> (E\times M)/G\ ,
    \end{CD}
  \end{displaymath}
  and $s\colon E/G = X \to E(M) = (E\times M)/G$ is the section
  corresponding to $\tilde{s}$. Conversely, given a section $s$, we
  can recover $\tilde{s}$ by setting $\tilde{s}(e) = m$, where
  $s([e])=[(e,m)]$; this is well-defined because the fibres of
  $E\to X$ are $G$-torsors.
\end{remark}

We view $E$ as the $G$-frame bundle of $E(M)$ in the usual way: an
element $e\in E$ with $\pi(e)=x$ defines the frame
\begin{equation} \label{frame}
\begin{aligned}
  M &\xra{\cong} E(M)_x, \\
  m &\mapsto [e,m].
\end{aligned}
\end{equation}

\begin{proposition}
  \label{prop:EM-Gamma-equivariant}
  Let $M$ be a $(\theta,c)$-twisted right $(G,\Gamma)$-manifold and
  let $\pi\colon E\to X$ be a  $(\theta,c)$-twisted $\Gamma$-equivariant principal
  $G$-bundle. Then the associated fibre
  bundle with typical fibre $M$,
\begin{align*}
  E(M):=E\times_{G}M &\to X,\\
  [e,m] &\mapsto \pi(e),
\end{align*}
defined by the above construction is a $\Gamma$-equivariant fibre bundle.
\end{proposition}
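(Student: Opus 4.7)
My plan is to reduce the proof to the results already established for $\hat G$-actions, and to exploit Proposition~\ref{prop:quotient-Gamma-action} which turns $\hat G$-actions into genuine (untwisted) $\Gamma$-actions upon quotienting by $G$.

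First, by Proposition~\ref{prop:-G-Gamma-twisted-actions} (together with Remark~\ref{rem-GGamma-Ghat-manif-eq}), both $M$ and $E$ carry right $\hat G$-actions for $\hat G=G\times_{(\theta,c)}\Gamma$, and these actions are smooth. Form the product manifold $E\times M$ with the diagonal right $\hat G$-action. The restriction of this action to the subgroup $G\subset\hat G$ is the standard diagonal $G$-action, whose orbit space is by definition $E(M)=E\times_G M$. Applying Proposition~\ref{prop:quotient-Gamma-action} to the $\hat G$-action on $E\times M$ then produces a genuine (i.e.\ untwisted) $\Gamma$-action on $E(M)$, given on representatives by $[e,m]\cdot\gamma=[e\cdot\gamma,m\cdot\gamma]$, and such that the quotient map $E\times M\to E(M)$ is equivariant for $\hat G\to\Gamma$.

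Next I would verify the compatibility with the bundle structure. The projection $E(M)\to X$ factors through $E\to X=E/G$, so it is obtained from the first projection $E\times M\to E$ (which is $\hat G$-equivariant) followed by the $\Gamma$-equivariant quotient $E\to X$ (which is precisely how the $\Gamma$-action on $X$ is induced by the twisted $\Gamma$-action on $E$, via Proposition~\ref{prop:quotient-Gamma-action} again). Hence the projection $E(M)\to X$ is $\Gamma$-equivariant, as required by Definition~\ref{def:gamma-twisted-bundle}. Smoothness of the $\Gamma$-action on $E(M)$ follows from smoothness of the $\hat G$-action on $E\times M$ together with the fact that $E\times M\to E(M)$ is a smooth principal $G$-bundle projection, so the induced diffeomorphism of $E(M)$ associated with each $\gamma\in\Gamma$ is smooth. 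The fact that $E(M)\to X$ is a fibre bundle with typical fibre $M$ is standard from the theory of associated bundles.

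The only step requiring any care is checking that the $\Gamma$-action one obtains on $E(M)$ is honest rather than twisted; this is precisely the content of Proposition~\ref{prop:quotient-Gamma-action}, and can be seen concretely from the identity $(1,\gamma_1)(1,\gamma_2)=(c(\gamma_1,\gamma_2),\gamma_1\gamma_2)$ in $\hat G$: the element $c(\gamma_1,\gamma_2)\in Z\subset G$ acts trivially on the $G$-quotient, so the compositions $\gamma_1\gamma_2$ on $E(M)$ behave strictly associatively. Since the rest is formal transport through the equivalences already set up, I do not expect any genuine obstacle beyond this bookkeeping.
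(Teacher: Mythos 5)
Your proof is correct and follows essentially the same route as the paper: the paper's proof likewise applies Proposition~\ref{prop:quotient-Gamma-action} to the diagonal twisted $(G,\Gamma)$-action on $E\times M$ to obtain the genuine $\Gamma$-action $[e,m]\cdot\gamma=[e\cdot\gamma,m\cdot\gamma]$ on $E(M)$, with $\Gamma$-equivariance of the projection immediate from that of $\pi\colon E\to X$. Your additional remarks on smoothness and on why the cocycle disappears in the quotient are accurate elaborations of what the paper leaves implicit.
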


\begin{proof}
  Proposition~\ref{prop:quotient-Gamma-action} shows that
\begin{displaymath}
  [e,m]\cdot\gamma = [e\cdot\gamma,m\cdot\gamma]
\end{displaymath}
defines a $\Gamma$-action on $E(M)$, and $\Gamma$-equivariance of the
projection is immediate from (twisted) $\Gamma$-equivariance of $\pi\colon E\to X$.
\end{proof}


\begin{remark}
  Note that the map (\ref{frame}) has no equivariance properties with
  respect to $\Gamma$.
\end{remark}

Let $C^\infty(E,M)^{G,\Gamma}$ be the space of $(\theta,c)$-twisted
$(G,\Gamma)$-equivariant smooth maps $\tilde{s}\colon E\to M$ in the
sense of Definition \ref{equivarian-maps}, and let
$C^{\infty}(X,E(M))^{\Gamma}$ be the space of $\Gamma$-equivariant
sections of $E(M)\to X$.

\begin{proposition}
\label{prop:EM-Gamma-euivariant-sections}
With the above notation, there is a bijection
\begin{displaymath}
  C^\infty(E,M)^{G,\Gamma} \xra{\cong} C^{\infty}(X,E(M))^{\Gamma}.
\end{displaymath}
\end{proposition}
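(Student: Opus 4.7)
The plan is to realize this as a direct application of Lemma~\ref{lem:twisted-equivariant-sections-equivalence}, which was essentially tailored to this purpose, combined with the $G$-equivariance correspondence already recorded in Remark~\ref{rem:sections-equivariant-correspondence}.

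First I would note that, by Remark~\ref{rem:sections-equivariant-correspondence}, the map
\begin{displaymath}
  \tilde{s} \longmapsto s, \qquad s([e]) = [(e,\tilde{s}(e))],
\end{displaymath}
gives a bijection between smooth $G$-equivariant maps $\tilde{s}\colon E \to M$ (in the right-action sense, i.e.\ $\tilde{s}(eg) = \tilde{s}(e)\cdot g$) and smooth sections $s\colon X \to E(M)$. The claim to prove is that under this bijection, the subset of $(\theta,c)$-twisted $(G,\Gamma)$-equivariant maps corresponds exactly to the subset of $\Gamma$-equivariant sections.

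Next I would translate the twisted equivariance conditions on both sides into plain $\hat{G}$- and $\Gamma$-equivariance. By Propositions~\ref{prop:-G-Gamma-twisted-actions} and \ref{prop:twisted-equivariant-hat}, the $(\theta,c)$-twisted $(G,\Gamma)$-actions on $E$ and $M$ are genuine right $\hat{G}$-actions, and $\tilde{s}$ is $(\theta,c)$-twisted $(G,\Gamma)$-equivariant if and only if $\tilde{s}$ is $\hat{G}$-equivariant. On the base, the induced $\Gamma$-action on $E(M)$ from Proposition~\ref{prop:EM-Gamma-equivariant} is exactly the one for which $\Gamma$-equivariance of $s$ is to be tested, and by construction $E/G = X$ with the given $\Gamma$-action.

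The core step is then to invoke Lemma~\ref{lem:twisted-equivariant-sections-equivalence}: since $G$ acts freely on the principal bundle $E$, and since $\tilde{s}$ is $G$-equivariant by hypothesis, the lemma asserts precisely that $\tilde{s}$ is $\hat{G}$-equivariant if and only if the induced map $s\colon X \to E(M)$ is $\Gamma$-equivariant. Combining this equivalence with the bijection of Remark~\ref{rem:sections-equivariant-correspondence} yields the desired bijection between $C^\infty(E,M)^{G,\Gamma}$ and $C^\infty(X,E(M))^{\Gamma}$.

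The smoothness aspects are routine: $\tilde{s}$ is smooth iff the induced section $s$ is smooth, since $E\to X$ is a principal $G$-bundle and admits local sections, so no serious obstacle arises there. The real content is entirely carried by Lemma~\ref{lem:twisted-equivariant-sections-equivalence}, and the proof amounts to assembling the previously established pieces in order.
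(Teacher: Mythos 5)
Your proof is correct and follows essentially the same route as the paper: the paper's own proof likewise cites Remark~\ref{rem:sections-equivariant-correspondence} for the $G$-equivariance correspondence and Lemma~\ref{lem:twisted-equivariant-sections-equivalence} for the $\Gamma$-equivariance statement, with your translation to $\hat{G}$-equivariance via Propositions~\ref{prop:-G-Gamma-twisted-actions} and \ref{prop:twisted-equivariant-hat} merely making explicit what the paper leaves implicit.
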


\begin{proof}
  The correspondence between $G$-equivariant maps $E\to M$ and
  sections of $E(M)\to X$ is given by Remark~\ref{rem:sections-equivariant-correspondence}
and the $\Gamma$-equivariance statement follows from Lemma~\ref{lem:twisted-equivariant-sections-equivalence}.
\end{proof}

\begin{remark}
  Since usually the twisted $(G,\Gamma)$-action on $M$ is on the left,
  we spell out the correspondence in the case: there is a bijective
  correspondence between $\Gamma$-equivariant smooth sections of
  $E(M)\to X$ and smooth maps $\tilde{s}\colon E\to M$ satisfying
  \begin{align*}
    \tilde{s}(eg) &= g^{-1}\cdot\tilde{s}(e),\\
    \tilde{s}(e\cdot\gamma) &=
      c(\gamma^{-1},\gamma)^{-1}\cdot(\gamma^{-1}\cdot\tilde{s}(e)).
  \end{align*}
\end{remark}

\subsection{Extension and reduction of structure group of twisted equivariant bundles}
\label{reductions-twist}

Let $H$ be a Lie group equipped with a homomorphism $\tau:\Gamma\to\Aut(H)$. Let $\psi\colon G\to H$ be a $\Gamma$-equivariant Lie group homomorphism such that $\psi(c(\Gamma\times\Gamma))\subseteq Z(H)$, the centre of $H$. Then $\psi\circ c:\Gamma\times\Gamma\to Z(H)$ is an element of $Z^2_{\tau}(\Gamma,Z(H))$.
 This setting allows us to
include extensions of structure group in our framework. To do this, recall that
the principal $H$-bundle obtained by extension of structure group of
the principal $G$-bundle $E$ via
$\psi\colon G\to H$ is $F=E\times_{G}H$, where the equivalence
relation is $(e,h)\sim (eg^{-1},\psi(g)h)$. Then,
if $E$ is a
twisted $\Gamma$-equivariant bundle, we can define a twisted $\Gamma$-action on $F=E\times_{G}H$ by
\begin{displaymath}
   [e,h]\cdot\gamma = [e\cdot\gamma,h\cdot\gamma].
\end{displaymath}
It is easy to check that this is well defined and makes $F\to X$ into
a $(\tau,\psi\circ c)$-twisted
$\Gamma$-equivariant $H$-bundle such that the natural map $E\to F$ is a
morphism of twisted $\Gamma$-equivariant principal bundles.

Let $E$ be a $G$-bundle over $X$ and let $H\subseteq G$ be a subgroup. A {\bf reduction of structure
  group} of $E$ to $H$ is a section $\sigma$  of $E(G/H)$.
If we
view $E$ as the bundle of $G$-frames of itself, then $\sigma$ is a section
of the bundle of $H$-equivalence classes of $G$-frames.
Since $E(G/H)\cong E/H$ canonically and the quotient $E\to
E/H$ has the structure of a principal $H$-bundle, the pullback
$E_{\sigma}:=\sigma^*E$ is a principal $H$-bundle over $X$, and
we can identify canonically $E\cong E_{\sigma}\times_{H}G$ as
principal $G$-bundles (this justifies saying that $\sigma$ gives a
reduction of the
structure group of $E$ to $H$).

Now assume that $H\subseteq G$ is invariant under $\theta$ and that
$E$ is a $(\theta,c)$-twisted $\Gamma$-equivariant principal $G$-bundle.
We want to obtain sufficient and necessary conditions
for the $\Gamma$-action on $E$ to preserve $E_{\sigma}$, regarded as
an $H$-invariant submanifold of $E$. Note that the section of
$E(G/H)$ determining the reduction of structure group may be
regarded as a $G$-equivariant map $s:E\to G/H$, where $G$ acts on
$G/H$ by inverse left multiplication. This map is determined by
$E_{\sigma}$, namely $e\in E$ is sent to $g^{-1}H$, where $g\in G$
is such that $e\in E_{\sigma}g$. For every $\gamma\in\Gamma$, from (\ref{eq:twist-gamma-1}), we have
$$
e\cdot\gamma\in (E_{\sigma}g)\cdot\gamma = (E_{\sigma}\cdot\gamma)\tg^{-1}(g)
$$
which shows that $E_{\sigma}=E_{\sigma}\cdot\gamma$ if and only if $s$ is $\Gamma$-equivariant, where $\Gamma$ acts on $G/H$ on the right via the inverse of $\theta$. This is a $\theta$-twisted right $\Gamma$-equivariant action:

\begin{proposition}
  \label{prop:GammaGmodHaction}
  Let $H\subseteq G$
be a Lie subgroup which is preserved by the
$\Gamma$-action, i.e., such that $\gamma\cdot h\in H$ for all $h\in
H$. Consider the usual $G$-action on the homogeneous space $G/H$ given by
$g_1\cdot (gH) = (g_1g)H$. Then
\begin{displaymath}
  \gamma\cdot (gH) = (\gamma\cdot g)H.
\end{displaymath}
defines a $\theta$-twisted $(G,\Gamma)$-action on
$G/H$, in other words, an action of the semidirect product
$G\rtimes_{\theta}\Gamma$ on $G/H$ defined by $\theta$.
\end{proposition}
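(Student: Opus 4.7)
The plan is to verify the three axioms of Definition~\ref{def:G-Gamma-twisted-action} for a left action, using the trivial cocycle (this is what is meant by a $\theta$-twisted action, equivalently an honest semidirect product action in view of Proposition~\ref{prop:-G-Gamma-twisted-actions}).

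First I would check that the formula $\gamma\cdot(gH)=(\gamma\cdot g)H$ is well-defined on cosets. If $gH=g'H$, then $g'=gh$ for some $h\in H$, and
\begin{displaymath}
\gamma\cdot g' = \theta_{\gamma}(gh) = \theta_{\gamma}(g)\theta_{\gamma}(h) = (\gamma\cdot g)(\gamma\cdot h),
\end{displaymath}
which lies in $(\gamma\cdot g)H$ because $H$ is $\Gamma$-invariant by hypothesis. The fact that $\theta_\gamma$ is a diffeomorphism of $G$ descending to a diffeomorphism of $G/H$ follows at once.

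Next I would verify the three conditions of Definition~\ref{def:G-Gamma-twisted-action}. Axiom~(\textit{i}) is immediate, since $\theta_1=\Id_G$ gives $1\cdot(gH)=gH$. For axiom~(\textit{ii}), a direct computation yields
\begin{displaymath}
\gamma\cdot(g_1\cdot(gH)) = (\theta_{\gamma}(g_1g))H = (\theta_{\gamma}(g_1)\theta_{\gamma}(g))H = g_1^{\gamma}\cdot(\gamma\cdot(gH)).
\end{displaymath}
For axiom~(\textit{iii}), since $\theta\colon\Gamma\to\Aut(G)$ is a homomorphism we have $\theta_{\gamma_1}\theta_{\gamma_2}=\theta_{\gamma_1\gamma_2}$, hence
\begin{displaymath}
\gamma_1\cdot(\gamma_2\cdot(gH)) = \theta_{\gamma_1}(\theta_{\gamma_2}(g))H = \theta_{\gamma_1\gamma_2}(g)H = (\gamma_1\gamma_2)\cdot(gH),
\end{displaymath}
so the cocycle is trivial, and we indeed obtain a $(\theta,1)$-twisted action.

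There is no real obstacle here — the statement is essentially a bookkeeping check — but the one point worth stressing in the write-up is that the $\Gamma$-invariance hypothesis on $H$ is used exactly once, namely to make the action well-defined on cosets. The final conclusion about the semidirect product $G\rtimes_{\theta}\Gamma$ then follows by appealing to Proposition~\ref{prop:-G-Gamma-twisted-actions}, which identifies $(\theta,1)$-twisted $(G,\Gamma)$-actions with genuine actions of $G\times_{(\theta,1)}\Gamma=G\rtimes_{\theta}\Gamma$.
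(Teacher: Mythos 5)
Your proof is correct and follows essentially the same route as the paper: a direct verification of the axioms of Definition~\ref{def:G-Gamma-twisted-action} with trivial cocycle, using that $\theta$ is a homomorphism. The only difference is that you also spell out the well-definedness on cosets (where the $\Gamma$-invariance of $H$ enters), a point the paper's proof leaves implicit.
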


\begin{proof}
For every $g,g'\in G$ and $\gamma,\gamma'\in\Gamma$ we have
\begin{equation*}
  \gamma\cdot (g'gH)=\tg(g'g)H=\tg(g')\tg(g)H=\tg(g')(\gamma\cdot gH)
\end{equation*}
and
\begin{equation*}
  \gamma\cdot(\gamma'\cdot gH)=\tg\theta_{\gamma'}(g)H=\theta_{\gamma\gamma'}(g)H=\gamma\gamma'\cdot gH,
\end{equation*}
as required.
\end{proof}

By Section \ref{associated-bundles} we have a right $\Gamma$-action on the space of sections of $E(G/H)$, and we thus have the following.
\begin{proposition}
Given a reduction of structure group $\sigma\in E(G/H)$, the corresponding $H$-bundle $E_{\sigma}\subset E$ is $\Gamma$-invariant if and only if $\sigma$ is $\Gamma$-invariant. If any of these two equivalent conditions is met then there is an induced $(\theta,c)$-twisted $\Gamma$-equivariant structure on $E_{\sigma}$.
\end{proposition}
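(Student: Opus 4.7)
The plan is to assemble the proposition from three ingredients already established in the paper: the reduction-to-map correspondence (Remark~\ref{rem:sections-equivariant-correspondence}), the $\theta$-twisted $(G,\Gamma)$-structure on $G/H$ of Proposition~\ref{prop:GammaGmodHaction}, and the bijection between twisted $(G,\Gamma)$-equivariant maps and $\Gamma$-equivariant sections (Proposition~\ref{prop:EM-Gamma-euivariant-sections}). The central computation linking $\Gamma$-invariance of $E_\sigma$ to $\Gamma$-equivariance of the classifying map is in fact already carried out in the paragraph immediately preceding Proposition~\ref{prop:GammaGmodHaction}.

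First I would recall that $\sigma$ corresponds under Remark~\ref{rem:sections-equivariant-correspondence} to a $G$-equivariant map $s\colon E\to G/H$ (with $s(eg) = g^{-1}s(e)$) satisfying $E_\sigma = s^{-1}(H)$. Using \eqref{eq:twist-gamma-1} with $h=\theta_\gamma^{-1}(g)$ gives $(e_0 g)\cdot\gamma = (e_0\cdot\gamma)\,\theta_\gamma^{-1}(g)$; applied to $e_0\in E_\sigma$ and tracked through cosets, this yields $s(e\cdot\gamma) = \theta_\gamma^{-1}(g^{-1})\,s(e_0\cdot\gamma)$ while $s(e)\cdot\gamma = \theta_\gamma^{-1}(g^{-1})H$ (for $e = e_0 g$). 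Hence $E_\sigma\cdot\gamma = E_\sigma$ for every $\gamma$ is equivalent to $s$ being $\Gamma$-equivariant with respect to the right action $gH\cdot\gamma = \theta_\gamma^{-1}(g)H$ on $G/H$. This right action is exactly the one obtained, via the left--right conversion of Section~\ref{sec:actions-extensions-lie}, from the $\theta$-twisted $(G,\Gamma)$-left action of Proposition~\ref{prop:GammaGmodHaction}, so Proposition~\ref{prop:EM-Gamma-euivariant-sections} transports $\Gamma$-equivariance of $s$ to $\Gamma$-equivariance (``$\Gamma$-invariance'') of $\sigma$ as a section, completing the first assertion.

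For the induced structure, assume $E_\sigma$ is $\Gamma$-invariant. Then the $(\theta,c)$-twisted right $\Gamma$-action on $E$ restricts to $E_\sigma$, and the restriction of the right $G$-action to the $\theta$-invariant subgroup $H$ makes $E_\sigma\to X$ into a principal $H$-bundle. Relation \eqref{eq:twist-gamma-1} stays inside $E_\sigma$ for $h\in H$ because $h^\gamma\in H$, and the cocycle identity \eqref{eq:twist-gamma-2} restricts because both of its sides lie in the $\Gamma$-invariant set $E_\sigma$; since each map $\cdot\gamma\colon E_\sigma\to E_\sigma$ is a bijection, this forces $e\,c(\gamma_1,\gamma_2)\in E_\sigma$ and hence $c(\gamma_1,\gamma_2)\in H$, so the cocycle automatically takes values in $Z(G)\cap H\subseteq Z(H)$. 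The restricted data then define a $(\theta,c)$-twisted $\Gamma$-equivariant principal $H$-bundle structure on $E_\sigma$.

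I do not expect a substantive obstacle: the proof is in essence bookkeeping on top of the already-proved propositions. The one point worth double-checking carefully is the consistency of the right-action conventions on $G/H$ between the map picture (where $G/H$ carries the right action dual to inverse left multiplication) and the section picture coming from Proposition~\ref{prop:EM-Gamma-euivariant-sections}; once that alignment is made explicit via Proposition~\ref{prop:GammaGmodHaction}, the rest is a matter of restricting identities that already hold in $E$.
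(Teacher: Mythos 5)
Your proposal is correct and follows essentially the same route as the paper, which proves this proposition by the discussion immediately preceding it: the identity $(E_{\sigma}g)\cdot\gamma=(E_{\sigma}\cdot\gamma)\theta_\gamma^{-1}(g)$ identifies $\Gamma$-invariance of $E_\sigma$ with $\Gamma$-equivariance of the classifying map $E\to G/H$, and Propositions~\ref{prop:GammaGmodHaction} and \ref{prop:EM-Gamma-euivariant-sections} transport this to $\Gamma$-invariance of the section $\sigma$. Your extra check that the restricted action stays twisted by the same cocycle (forcing $c$ to take values in $Z(G)\cap H\subseteq Z(H)$) is a detail the paper leaves implicit, but it is consistent with its argument.
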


This motivates the following:
\begin{definition}
  Given a $(\theta,c)$-twisted $\Gamma$-equivariant $G$-bundle $E$ over $Y$ and a $\Gamma$-invariant subgroup $H\subset G$, a \textbf{$(\theta,c)$-twisted $\Gamma$-equivariant reduction of
structure group of $E$ to $H$} is a $\Gamma$-invariant section of $E(G/H)$.
\end{definition}


\begin{remark}
In view of
  Proposition~\ref{prop:EM-Gamma-euivariant-sections}, the map
  $\tilde{\sigma}$ corresponds to a $\Gamma$-equivariant section
  $\sigma$ of the $\Gamma$-equivariant bundle $E(G/H)\to X$.
\end{remark}

\section{Twisted equivariant bundles and coverings}
\label{coverings-twist}


Let $G$ be a connected Lie group and let $\Gamma$ be a discrete group.
Consider a group $\hat{G}$ given as an extension
\begin{equation}
  \label{eq:extension-Gamma-G}
  1 \to G \to \hat{G} \to \Gamma \to 1.
\end{equation}
As in Section \ref{sec:extensions-lie}, we will assume that there is a
lift of the characteristic homomorphism $\bar{\theta}\colon\Gamma \to \Out(G)$
of the extension \eqref{eq:extension-Gamma-G}
to a homomorphism $\theta\colon \Gamma \to \Aut(G)$ making the diagram
\begin{displaymath}
  \xymatrix{
    \Gamma \ar[r]^(.35){\theta} \ar[dr]_{\bar\theta} & \Aut(G) \ar[d] \\
    & \Out(G)
  }
  \end{displaymath}
commutative and identify $\hat{G}=G\times_{\theta,c}\Gamma$.

Let $X$ be a connected smooth manifold.  In this section we relate principal
$\hat{G}$-bundles on $X$ with twisted $\Gamma$-equivariant principal
$G$-bundles over a covering $p\colon Y\to X$ with covering group
$\Gamma$. We work in the smooth category for definiteness, but
everything could equally well be done in the holomorphic category.

\subsection{An equivalence of categories}
\label{equivalence-of-categories}

The goal of this section is to relate principal $\hat{G}$-bundles on a
manifold $X$ to $(\theta,c)$-twisted $\Gamma$-equivariant principal
$G$-bundles on a suitable covering $Y\to X$.
Since we wish to understand isomorphism classes, we shall
make categorical statements. In order to find the correct notion of
morphism we start upstairs.

Let $Y$ be a connected smooth manifold equipped with a smooth $\Gamma$-action $\Gamma\to\Aut(Y)$ and let $E\to Y$ be a
$(\theta,c)$-twisted $\Gamma$-equivariant principal $G$-bundle. We
have a canonical isomorphism $E/G\cong Y$ and a right
$\Gamma$-action on $Y$ induced by the $G$-action on $E$. Moreover, if
the $\Gamma$-action on $Y$ is free and properly discontinuous, then
$E\to X=Y/\Gamma$ is a principal $\hat{G}$-bundle (cf.\ Remark~\ref{rem-GGamma-Ghat-manif-eq}).

\begin{proposition}
  \label{prop:morhisms-fixed-covering}
  Let $E\to Y$ and $E'\to Y$ be $(\theta,c)$-twisted $\Gamma$-equivariant
  principal $G$-bundles. Assume that the induced $\Gamma$-action on
  $E/G \cong Y \cong E'/G$ is free and write $Y\to X=Y/\Gamma$ for
  the corresponding (right) $\Gamma$-covering.  Let
  $\phi\colon E\to E'$ be a morphism of the corresponding principal
  $\hat{G}$-bundles on $X$, i.e., a
  $\hat{G}$-equivariant map covering the
  identity on $X$. Then there is a unique covering transformation
  $\bar{\phi}\colon Y\to Y$ making the diagram
  \begin{displaymath}
    \begin{CD}
      E @>{\phi}>> E'\\
      @VVV @VVV \\
      Y @>{\bar{\phi}}>> Y \\
      @VVV @VVV \\
      X @>{\mathrm{Id}}>> X
    \end{CD}
  \end{displaymath}
  commutative. Moreover $\bar{\phi}\in Z(\Gamma)\subset\Gamma=\Aut(Y/X)$.
\end{proposition}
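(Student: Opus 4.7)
The plan is to construct $\bar\phi$ by taking the quotient of $\phi$ by the (fibrewise) $G$-action, then use the $(\theta,c)$-twisted $\Gamma$-equivariance of $\phi$ to force $\bar\phi$ to commute with the $\Gamma$-action on $Y$, which will pin it down as an element of $Z(\Gamma)$.

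First, since $G\subseteq\hat G$, the map $\phi$ is in particular $G$-equivariant, so it descends to a map $\bar\phi\colon E/G\to E'/G$, i.e.\ a map $Y\to Y$ after identifying both quotients with $Y$. The fact that $\phi$ covers the identity on $X=Y/\Gamma$ means that $\bar\phi$ also covers the identity on $X$, so $\bar\phi$ is a deck transformation of the $\Gamma$-covering $Y\to X$. Since $Y\to X$ is a Galois $\Gamma$-covering, $\bar\phi$ is uniquely determined by its image at a single point; in particular it equals right multiplication by a unique element $\gamma_0\in\Gamma$, i.e.\ $\bar\phi(y)=y\cdot\gamma_0$ for all $y\in Y$. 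Uniqueness of $\bar\phi$ making the diagram commute is then automatic.

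For the centrality claim, I will exploit that $\phi$, being $\hat G$-equivariant, is in particular $(\theta,c)$-twisted $\Gamma$-equivariant in the sense of Proposition~\ref{prop:twisted-equivariant-hat}. Concretely, $\phi(e\cdot\gamma)=\phi(e)\cdot\gamma$ for all $\gamma\in\Gamma$ and $e\in E$. Passing to the quotient $E/G=Y=E'/G$, and recalling that the $\Gamma$-action on $Y$ is the one induced from the twisted action on $E$ (respectively $E'$), this gives
\begin{displaymath}
\bar\phi(y\cdot\gamma) \;=\; \bar\phi(y)\cdot\gamma \quad\text{for every } \gamma\in\Gamma,\; y\in Y.
\end{displaymath}
Substituting $\bar\phi(y)=y\cdot\gamma_0$ yields $y\cdot(\gamma\gamma_0)=y\cdot(\gamma_0\gamma)$ for all $y\in Y$ and $\gamma\in\Gamma$. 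Since the $\Gamma$-action on $Y$ is free, we conclude $\gamma_0\gamma=\gamma\gamma_0$ for all $\gamma$, i.e.\ $\gamma_0\in Z(\Gamma)$.

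The only mildly delicate point is to check that the $\Gamma$-action on $Y$ obtained as the quotient of the twisted action on $E$ agrees with the $\Gamma$-action obtained from $E'$, so that the equality $\bar\phi(y\cdot\gamma)=\bar\phi(y)\cdot\gamma$ makes sense and uses a single action on $Y$; this follows from the fact that both actions descend from the same right $\hat G$-action via the quotient by $G$, combined with Proposition~\ref{prop:quotient-Gamma-action}. Everything else is essentially formal once one has set up the descent of $\phi$ to $Y$.
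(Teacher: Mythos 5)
Your proof is correct and follows essentially the same route as the paper's: descend $\phi$ through the $G$-quotient to get a deck transformation $\bar\phi(y)=y\cdot\gamma_0$, then use the $\Gamma$-equivariance inherited from $\hat G$-equivariance together with freeness of the $\Gamma$-action to conclude $\gamma_0\in Z(\Gamma)$. Your extra remark verifying that the $\Gamma$-actions on $Y$ induced from $E$ and from $E'$ coincide is a point the paper leaves implicit (it is built into the identification $E/G\cong Y\cong E'/G$ in the hypotheses), so it is a harmless addition rather than a divergence.
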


\begin{proof}
  Since $\phi$ is $\hat{G}$-equivariant, it is also $G$-equivariant.
  Hence $\phi$ descends to a unique $\bar{\phi}$ which, being a
  covering transformation, is of the form
  $\bar{\phi}(y)=y\cdot\bar{\gamma}$ for a fixed
  $\bar{\gamma}\in\Gamma$
  Moreover, $\hat{G}$-equivariance of $\phi$
  implies $\Gamma$-equivariance of $\bar{\phi}$, so that for any
  $\gamma\in\Gamma$ and $y\in Y$,
  \begin{displaymath}
    y\cdot\gamma\bar{\gamma}
    =(y\cdot\gamma)\cdot\bar{\gamma}
    =\bar{\phi}(y\cdot\gamma)
    =\bar{\phi}(y)\cdot\gamma
    =(y\cdot\bar{\gamma})\cdot\gamma
    =y\cdot\bar{\gamma}\gamma.
  \end{displaymath}
  We conclude that $\gamma\bar{\gamma} = \bar{\gamma}\gamma$ because
  the $\Gamma$-action is free.
\end{proof}

Let $Y\to X$ be a fixed connected $\Gamma$-covering, where the
discrete group $\Gamma$ acts on the right.

\begin{definition}
  We denote by $\mathcal{C}_1$ the category whose
  \begin{itemize}
  \item \textbf{objects} are $(\theta,c)$-twisted $\Gamma$-equivariant
    principal $G$-bundles $E\to Y$ such that the twisted
    $\Gamma$-action descends to the action of $\Gamma$ as covering
    transformations of the fixed $\Gamma$-covering $E/G\cong Y\to X$,
    and whose
  \item \textbf{morphisms} are $(\theta,c)$-twisted $(G,\Gamma)$-equivariant
    maps $\phi\colon E\to E'$ such that the diagram
    \begin{displaymath}
      \begin{CD}
      E @>{\phi}>> E'\\
      @VVV @VVV \\
      Y @>{\bar{\phi}}>> Y
      \end{CD}
    \end{displaymath}
    commutes and the induced map $\bar{\phi}\colon Y\to Y$  is
    a covering transformation which belongs to $Z(\Gamma)$.
  \end{itemize}
\end{definition}

\begin{proposition}
  \label{prop:principal-Ghat-fixed-cover}
  The category $\mathcal{C}_1$ is isomorphic to the category whose
  objects are principal $\hat{G}$-bundles $E\to X$ together with an
  identification $E/G\xra{\cong}Y\to X$ of $E/G$ with the fixed
  $\Gamma$-covering $Y\to X$, and whose morphisms are morphisms of
  principal $\hat{G}$-bundles.
\end{proposition}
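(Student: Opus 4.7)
The plan is to construct explicit functors in both directions and verify they are mutually inverse, using the equivalence between $(\theta,c)$-twisted $(G,\Gamma)$-actions and $\hat{G}$-actions (Proposition~\ref{prop:-G-Gamma-twisted-actions}) together with Proposition~\ref{prop:morhisms-fixed-covering}.

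First I would define a functor $F\colon\mathcal{C}_1\to\{\hat{G}\text{-bundles on }X\text{ with fixed }E/G\cong Y\}$. Given an object $E\to Y$ in $\mathcal{C}_1$, Remark~\ref{rem-GGamma-Ghat-manif-eq} turns the twisted $(G,\Gamma)$-action on $E$ into an $\hat{G}$-action. By hypothesis, the induced $\Gamma$-action on $E/G\cong Y$ is the given free, properly discontinuous covering action, so the composition $E\to Y\to X=Y/\Gamma$ is equivariant for the projection $\hat{G}\to\Gamma$ and makes $E\to X$ into a principal $\hat{G}$-bundle. The canonical identification $E/G\cong Y$ is part of the data. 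On a morphism $\phi\colon E\to E'$ in $\mathcal{C}_1$, Proposition~\ref{prop:twisted-equivariant-hat} says $\phi$ is $\hat{G}$-equivariant; since the induced $\bar{\phi}\colon Y\to Y$ is a covering transformation of $Y\to X$, it descends to the identity on $X$, so $F(\phi)$ is a morphism of principal $\hat{G}$-bundles over $X$.

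Next I would define the inverse functor $F'$. Given a principal $\hat{G}$-bundle $E\to X$ with an identification $E/G\xrightarrow{\cong}Y$, restriction to $G$ makes $E\to Y$ a principal $G$-bundle, and Proposition~\ref{prop:-G-Gamma-twisted-actions} (applied to the $\hat{G}$-action on $E$) supplies a $(\theta,c)$-twisted $(G,\Gamma)$-action on $E$ whose induced $\Gamma$-action on $E/G\cong Y$ is the covering action of $\Gamma$ on $Y$; hence $E\to Y$ lies in $\mathcal{C}_1$. On morphisms $\phi\colon E\to E'$ covering $\mathrm{Id}_X$, Proposition~\ref{prop:morhisms-fixed-covering} guarantees that $\phi$ descends to a covering transformation $\bar{\phi}\in Z(\Gamma)$, so $F'(\phi)$ is a morphism in $\mathcal{C}_1$.

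Finally, I would check that $F$ and $F'$ are mutually inverse, which is essentially tautological once the above steps are in place: on objects, the underlying set of $E$ and the $G$-action are unchanged, and the twisted $(G,\Gamma)$-action and the $\hat{G}$-action determine each other by Proposition~\ref{prop:-G-Gamma-twisted-actions}; on morphisms, the $\hat{G}$-equivariant maps and the $(\theta,c)$-twisted $(G,\Gamma)$-equivariant maps coincide by Proposition~\ref{prop:twisted-equivariant-hat}. The only subtle point, and what I would regard as the main obstacle, is matching the notions of morphism correctly: one must verify that morphisms in $\mathcal{C}_1$ (which are allowed to cover a nontrivial element $\bar{\phi}\in Z(\Gamma)$ of the deck group) correspond precisely to $\hat{G}$-bundle morphisms over $\mathrm{Id}_X$. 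This is exactly the content of Proposition~\ref{prop:morhisms-fixed-covering}, which says in one direction that such a $\hat{\phi}$ must descend to a deck transformation lying in $Z(\Gamma)$, while in the other direction any deck transformation of $Y\to X$ induces the identity downstairs. Thus the hom-sets match and $F$ is an isomorphism of categories.
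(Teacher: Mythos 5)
Your proposal is correct and follows essentially the same route as the paper: the paper's proof simply cites the paragraph preceding Proposition~\ref{prop:morhisms-fixed-covering} for the correspondence on objects and Proposition~\ref{prop:morhisms-fixed-covering} itself for the correspondence on morphisms, which is exactly what you spell out via your functors $F$ and $F'$. Your expanded verification (including the observation that the subtle point is matching morphisms covering elements of $Z(\Gamma)$ with $\hat{G}$-bundle morphisms over $\mathrm{Id}_X$) is a faithful elaboration of the paper's argument.
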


\begin{proof}
  As explained in the paragraph preceding
  Proposition~\ref{prop:morhisms-fixed-covering} objects of one
  category can be viewed as objects of the other, and in view of
  Proposition~\ref{prop:morhisms-fixed-covering} morphisms are also
  the same.
\end{proof}

\begin{definition}
  We denote by $\mathcal{C}_2$ the category whose
  \begin{itemize}
  \item \textbf{objects} are principal $\hat{G}$-bundles on $X$ such
    that there is an isomorphism $E/G\cong Y$ covering the identity on
    $X$
    and whose
  \item \textbf{morphisms} are morphisms of principal
    $\hat{G}$-bundles.
  \end{itemize}
\end{definition}

In view of Proposition~\ref{prop:principal-Ghat-fixed-cover}, the
only difference between $\mathcal{C}_1$ and $\mathcal{C}_2$ is that in
the latter category we do not specify the identification $E/G\cong Y$.

Recall that a functor is an equivalence of categories if it is fully
faithful and essentially surjective. In particular, it induces a
bijection on isomorphism classes of objects.

\begin{proposition}
  \label{prop:G-Ghat-equivalence}
  The functor from $\mathcal{C}_1$ to $\mathcal{C}_2$ which forgets
  the identification $E/G\cong Y$ is an equivalence of categories.
\end{proposition}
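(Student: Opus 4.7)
The plan is to show the functor is essentially surjective, full, and faithful, each time reducing to a result that has already been proved.

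For \emph{essential surjectivity}, let $E\to X$ be any object of $\mathcal{C}_2$. By definition there exists at least one isomorphism $\iota\colon E/G\xra{\cong} Y$ covering the identity on $X$. Using $\iota$, the paragraph preceding Proposition~\ref{prop:morhisms-fixed-covering} (together with Remark~\ref{rem-GGamma-Ghat-manif-eq}, which translates the $\hat{G}$-action on $E$ into a $(\theta,c)$-twisted $(G,\Gamma)$-action) exhibits $E\to Y$ as an object of $\mathcal{C}_1$ whose image under the forgetful functor is $E\to X$ itself. Hence the functor hits every isomorphism class on the nose.

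For \emph{fullness}, consider two objects of $\mathcal{C}_1$ sitting over the same fixed covering $Y\to X$, giving $\hat{G}$-bundles $E\to X$ and $E'\to X$, and let $\phi\colon E\to E'$ be any morphism in $\mathcal{C}_2$, i.e., a morphism of principal $\hat{G}$-bundles over $X$. By Proposition~\ref{prop:twisted-equivariant-hat} (or the remark after Definition~\ref{def:morphism-twq}), $\hat{G}$-equivariance of $\phi$ is equivalent to $(\theta,c)$-twisted $(G,\Gamma)$-equivariance. Moreover Proposition~\ref{prop:morhisms-fixed-covering} guarantees that the induced map $\bar\phi\colon Y\to Y$ is a covering transformation lying in $Z(\Gamma)$. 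Thus $\phi$ satisfies all the requirements to be a morphism in $\mathcal{C}_1$.

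For \emph{faithfulness}, observe that the forgetful functor is literally the identity on underlying maps (it only discards the datum of the identification $E/G\cong Y$, which is not part of the morphism). Hence two morphisms of $\mathcal{C}_1$ having the same image in $\mathcal{C}_2$ are equal.

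The only place where a genuine argument is needed is fullness, and the work has already been done in Proposition~\ref{prop:morhisms-fixed-covering}; the rest is bookkeeping about what the forgetful functor does to objects and morphisms. I do not expect any serious obstacle.
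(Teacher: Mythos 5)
Your proposal is correct and follows essentially the same route as the paper: the paper likewise derives fullness and faithfulness from Proposition~\ref{prop:morhisms-fixed-covering} and obtains essential surjectivity by simply choosing one of the isomorphisms $E/G\cong Y$ guaranteed by the definition of $\mathcal{C}_2$. Your slightly more explicit unpacking of faithfulness (the functor is the identity on underlying maps) and the citation of Proposition~\ref{prop:twisted-equivariant-hat} for the equivariance translation are consistent with, and implicit in, the paper's argument.
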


\begin{proof}
  Proposition~\ref{prop:morhisms-fixed-covering} says that the functor
  is full (surjective on hom-sets) and faithful (injective on
  hom-sets). Finally, given a principal $\hat{G}$-bundle on $X$ such
    that there is an isomorphism $E/G\cong Y$ covering the identity on
    $X$, we can of course choose such an isomorphism. Hence the
    functor is in fact surjective.
\end{proof}

\begin{remark}
  Even though an inverse functor from $\mathcal{C}_2$ to
  $\mathcal{C}_1$ can be constructed on abstract grounds, using the
  axiom of choice to specify identifications $E/G\cong Y$, there is no
  canonical way of doing this.
\end{remark}

Now let $\cc 3$ be the category of $\Gamma$-equivariant
$\hat G$-bundles over $Y$, with morphisms being $\Gamma$-equivariant
morphisms of $\hat G$-bundles (note that these induce the identity on $Y$). Recall that there is a natural
equivalence of categories between $\cc 2$ and $\cc 3$ given by
pullback, so that necessarily we must have a natural equivalence of
categories between $\cc1$ and $\cc 3$. A candidate map on objects that
makes this equivalence explicit is given as follows: given a twisted
equivariant $G$-bundle $E$ with $\Gamma$-action $\cdot$, consider its
extension of structure group $\hat E:=E\times_{G}\hat G$ to $\hat
G$. Note that $E$ may be regarded as a submanifold of $\hat E$, and
moreover
\begin{equation*}
\hat E=\bigsqcup_{\gamma\in\Gamma} E(1,\gamma).
\end{equation*}

We may define a $\Gamma$-action on $\hat E$ via its restriction to $E$: first define a product
\begin{equation}\label{eq-equivariant-from-twisted-equivariant}
    E\times\Gamma\to\hat E;\,(e,\gamma)\mapsto e*\gamma:= (e\cdot\gamma)(1,\gamma)^{-1}.
\end{equation}
This is $G$-equivariant: for every $g\in G$, $e\in E$ and $\gamma\in\Gamma$,
\begin{align*}
    (eg)* \gamma=((eg)\cdot\gamma)(1,\gamma)^{-1}=
    (e\cdot \gamma) \theta_{\gamma^{-1}}(g)(1,\gamma)^{-1}=(e\cdot\gamma)(1,\gamma)^{-1}g=
    (e*\gamma)g.
\end{align*}
Hence we may extend $*$ to a $\hat G$-equivariant $\Gamma$-action $*$ on $\hat E$. This is an honest group action since, for every $g\in G$, $e\in E$ and $\gamma$ and $\gamma'\in\Gamma$,
\begin{align*}
    (e*\gamma)*\gamma'=((e\cdot\gamma)(1,\gamma)^{-1})*\gamma'=((e\cdot\gamma)\cdot\gamma')(1,\gamma')^{-1}(1,\gamma)^{-1}=\\
    ((ec(\gamma,\gamma'))\cdot\gamma\gamma')((1,\gamma)(1,\gamma'))^{-1}=
    (e\cdot\gamma\gamma')\theta^{-1}_{\gamma\gamma'}(c(\gamma,\gamma'))(c(\gamma,\gamma')(1,\gamma\gamma'))^{-1}=\\
    (e\cdot\gamma\gamma')\theta^{-1}_{\gamma\gamma'}(c(\gamma,\gamma'))(1,\gamma\gamma')^{-1}c(\gamma,\gamma')^{-1}=(e\cdot\gamma\gamma')(1,\gamma\gamma')^{-1}c(\gamma,\gamma')c(\gamma,\gamma')^{-1}=e*\gamma\gamma'.
\end{align*}
Thus we have a map $\ff$ from objects of $\cc1$ to objects of $\cc 3$ mapping $(E,\cdot)$ to $(\hat E,*)$.


\begin{proposition}\label{prop-equivalence-twisted-equivariant-vs-equivariant}
The map $\ff:\ob(\cc1)\to\ob(\cc 3)$ induces an equivalence of categories $\cc1\to\cc3$ which we also call $\ff$. This fits in a commutative diagram
\begin{equation}
\label{eq-commutative-diagram-equivariant-twisted-equivariant-ghat}
   \xymatrix{
     \cc1\ar[r]^{\ff}\ar[dr] & \cc3\ar@<1ex>[d]^{/\Gamma} \\
     & \cc2,\ar[u]^{p^*}
   }
\end{equation}
where $p:Y\to X$ is the \'etale cover morphism and the diagonal functor is defined in Proposition \ref{prop:G-Ghat-equivalence}.

\end{proposition}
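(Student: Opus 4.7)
The plan is to extend $\ff$ to a functor, verify commutativity of \eqref{eq-commutative-diagram-equivariant-twisted-equivariant-ghat} up to natural isomorphism, and then conclude that $\ff$ is an equivalence by two-out-of-three, using that the diagonal functor $\cc 1\to\cc 2$ is an equivalence by Proposition~\ref{prop:G-Ghat-equivalence} and that $p^{*}\colon\cc 2\to\cc 3$ is an equivalence (with quasi-inverse $/\Gamma$) as recalled immediately above the statement.

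On morphisms, I would define $\ff(\phi)$ for $\phi\colon E\to E'$ in $\cc 1$ as the map $\hat\phi\colon\hat E\to\hat E'$, $[e,\hat g]\mapsto[\phi(e),\hat g]$, obtained from $\phi$ by extension of structure group. It is well defined and $\hat G$-equivariant for general reasons, and the required compatibility with the $*$-actions is a direct computation from \eqref{eq-equivariant-from-twisted-equivariant}:
$$\hat\phi(e*\gamma)=\phi(e\cdot\gamma)(1,\gamma)^{-1}=(\phi(e)\cdot\gamma)(1,\gamma)^{-1}=\hat\phi(e)*\gamma.$$
Functoriality of $\ff$ is then immediate.

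To verify commutativity of the triangle, I would construct a natural isomorphism $\iota\colon E\xra{\cong}\hat E/\Gamma$ of principal $\hat G$-bundles over $X$, where the left-hand side is $E$ regarded as a $\hat G$-bundle via Proposition~\ref{prop:-G-Gamma-twisted-actions} (so $e\cdot(g,\gamma)=(eg)\cdot\gamma$). Set $\iota(e)=[[e,1]]$, the class in $\hat E/\Gamma$ of $e\in E\subset\hat E$. Equivariance under the subgroup $G\subset\hat G$ is trivial. For the generators $(1,\gamma)$, applying $\cdot(1,\gamma)$ on the right to the relation $(e\cdot\gamma)(1,\gamma)^{-1}\sim e$ in $\hat E/\Gamma$ (which is precisely the defining identity of the $*$-action) gives $[[e\cdot\gamma,1]]=[[e,(1,\gamma)]]$, i.e.\ $\iota(e\cdot(1,\gamma))=\iota(e)\cdot(1,\gamma)$. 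Since $\iota$ covers the identity on $X$ and both source and target are principal $\hat G$-bundles over $X$, it is automatically an isomorphism; naturality in $\phi$ is clear from the explicit formulas.

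Granted the commutativity, the equivalence of $\ff$ follows by two-out-of-three. I expect no serious obstacle: the central computation is the $\hat G$-equivariance of $\iota$, which is a direct consequence of the defining formula of $*$. The cocycle $c$ enters only implicitly, through \eqref{inverse}, in checking that $*$ is a genuine right $\Gamma$-action and not merely a twisted one — but this verification is already carried out in the paragraph preceding the proposition, so no additional bookkeeping is needed here.
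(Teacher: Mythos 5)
Your object-level construction is sound and is essentially the paper's argument read in the opposite direction: the paper exhibits a $\hat G$-equivariant isomorphism $\hat E=E\times_G\hat G\xra{\cong}p^*E$, $[e,\hat g]\mapsto (e\hat g,q(e))$, and checks that it intertwines $*$ with the pullback action, whereas you exhibit $\iota\colon E\xra{\cong}\hat E/\Gamma$, $e\mapsto[[e,1]]$, and check its $\hat G$-equivariance directly; these verify the two triangles of the same diagram and either one suffices. Your computation for the generators $(1,\gamma)$ is correct, as is the two-out-of-three strategy in principle.

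The gap is in your explicit definition of $\ff$ on morphisms. A morphism $\phi\colon E\to E'$ of $\cc1$ is only required to cover a covering transformation $\bar\phi\colon y\mapsto y\cdot\bar\gamma$ with $\bar\gamma\in Z(\Gamma)$, not the identity of $Y$. Your map $\hat\phi\colon[e,\hat g]\mapsto[\phi(e),\hat g]$ then also covers $\bar\phi$ on $Y$, since $[\phi(e),\hat g]$ lies over $q'(\phi(e))=q(e)\cdot\bar\gamma$. But morphisms of $\cc3$ are required to induce the identity on $Y$ (the paper says so when defining $\cc3$, and this is forced if $p^*$ and $/\Gamma$ are to be quasi-inverse: enlarging $\cc3$ to admit morphisms covering central deck transformations destroys faithfulness of $/\Gamma$). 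So for $\bar\gamma\neq 1$ your $\ff(\phi)$ is not a morphism of $\cc3$, and the two-out-of-three argument is then applied to a functor that does not land in $\cc3$. Note that $\hat\phi/\Gamma$ does cover $\mathrm{Id}_X$, which is why your naturality square still closes; the defect is only in $\hat\phi$ itself. The repair is either to define $\ff$ on morphisms as $p^*$ of the diagonal functor transported along $\iota$ --- which is what the paper does, observing that commutativity on objects already determines $\ff$ --- or to correct your formula to $\ff(\phi)(\hat e)=\hat\phi(\hat e)*\bar\gamma^{-1}$, i.e.\ $[e,\hat g]\mapsto[\phi(e)\cdot\bar\gamma^{-1},(1,\bar\gamma^{-1})^{-1}\hat g]$, which covers $\mathrm{Id}_Y$ and descends to the same $\cc2$-morphism.
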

\begin{proof}
It is enough to prove the commutativity of the  diagram at the level of objects; indeed, this would determine $\ff$ as the composition of the diagonal functor and $p^*$.

Let $q:E_G\to Y$ be a $(\theta,c)$-twisted $\Gamma$-equivariant $G$-bundle over $Y$ and let $E_{\hat G}:=E_G\times_{G}\hat G$ be the extension of structure group of $E_G$ to $\hat G$. Consider the corresponding $\hat G$-bundle over $X$ defined by Proposition \ref{prop:G-Ghat-equivalence}, which we call $E$; recall that the total spaces of $E$ and $E_G$ are equal, the projection of $E_G$ on $X$ is just the composition $p\circ q$ and the $\hat G$-action on $E$ is determined by the original actions of $G$ and $\Gamma$ on $E$ following Proposition \ref{prop:-G-Gamma-twisted-actions}. Recall also that $p^*E:=E\times_{X}Y$ is equipped with the pullback $\Gamma$-equivariant action induced by the right action of $\Gamma$ on $Y$, which we denote with a dot.

We have a natural isomorphism of $\hat G$-bundles
\begin{equation}\label{eq-iso-extension-pullback}
    E_{\hat G}\to p^*E;\,(e,g)\mapsto (eg,q(e)).
\end{equation}
Recall that, if `$\cdot$' denotes the $\Gamma$-action on $E_G$, we have a $\Gamma$-equivariant action $*$ on $E_{\hat G}$ defined by (\ref{eq-equivariant-from-twisted-equivariant}). To finish the proof we need to check that the $\Gamma$-action on $p^*E$ induced by $*$ and (\ref{eq-iso-extension-pullback}) is precisely the pullback action. Indeed, for every $e\in E_G$ and $\gamma\in\Gamma$ we have
\begin{equation*}
    (e,1)*\gamma=(e\cdot\gamma,(1,\gamma)^{-1})\mapsto
    (e\cdot\gamma(1,\gamma)^{-1},q(e\cdot\gamma))=(e,q(e)\cdot\gamma)=\gamma^*(e,q(e)),
\end{equation*}
where the second equation follows from the fact that the action of $\gamma$ on $E_G$ is equal to the action of $(1,\gamma)$ on $E$, together with the $\Gamma$-equivariance of $q$. Thus by $\hat G$-equivariance of both $*$ and the pullback action we have, for every $(e,g)\in E_{\hat G}$ and $\gamma\in\Gamma$,
\begin{equation*}
    (e,g)*\gamma=((e,1)*\gamma) g\mapsto (\gamma^*(e,q(e)))g=\gamma^*(eg,q(e)),
\end{equation*}
so that $*$ induces the pullback action as required.
\end{proof}

Finally, we have the following important result.

\begin{proposition}
  Let $E\to Y$ be a $(\theta,c)$-twisted $\Gamma$-equivariant $G$
  bundle over $Y$ and $\hat{E}=E\to X$ be the corresponding
  $\hat{G}$-bundle over $X$. Let $M$ be a $\hat{G}$-manifold. Then
  $E(M)\to Y$ has the structure of a $\Gamma$-equivariant bundle, and
  there is a one-to-one correspondence between sections of
  $\hat{E}(M)\to X$ and $\Gamma$-equivariant sections of $E(M)\to Y$.
\end{proposition}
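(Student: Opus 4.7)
The plan is to chain together three bijective correspondences already available in the paper. First I would note that since $M$ is a $\hat{G}$-manifold and $\hat{G}=G\times_{(\theta,c)}\Gamma$, Remark \ref{rem-GGamma-Ghat-manif-eq} (equivalently, Proposition \ref{prop:-G-Gamma-twisted-actions}) converts the $\hat{G}$-action into a $(\theta,c)$-twisted $(G,\Gamma)$-action on $M$. Given the $(\theta,c)$-twisted $\Gamma$-equivariant $G$-bundle structure on $E\to Y$, Proposition \ref{prop:EM-Gamma-equivariant} then endows the associated bundle $E(M)=E\times_G M\to Y$ with a natural $\Gamma$-equivariant structure, defined on representatives by $[e,m]\cdot\gamma=[e\cdot\gamma,m\cdot\gamma]$. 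This takes care of the first assertion.

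For the section correspondence, the key observation is that the underlying manifold of $\hat E$ is literally $E$; the passage from $E$ to $\hat{E}$ only remembers how $G$ and $\Gamma$ combine into a single $\hat{G}$-action on the common total space. Consequently $\hat{E}(M)=E\times_{\hat{G}}M$. Now the three bijections I would chain are: (i) sections of $\hat{E}(M)\to X$ are in natural bijection with $\hat{G}$-equivariant maps $\tilde s\colon E\to M$, by Remark \ref{rem:sections-equivariant-correspondence} applied with structure group $\hat{G}$; (ii) Proposition \ref{prop:twisted-equivariant-hat} identifies $\hat{G}$-equivariant maps $E\to M$ with $(\theta,c)$-twisted $(G,\Gamma)$-equivariant maps $E\to M$; (iii) Proposition \ref{prop:EM-Gamma-euivariant-sections} identifies such twisted equivariant maps with $\Gamma$-equivariant sections of $E(M)\to Y$. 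Composing these three bijections yields the required correspondence.

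The only nontrivial point to verify, rather than an obstacle, is compatibility: one should check that the $\Gamma$-action on $E(M)$ produced in the first paragraph (from the twisted equivariant structure on $E$ and the twisted $(G,\Gamma)$-action on $M$) is the same one implicitly used by Proposition \ref{prop:EM-Gamma-euivariant-sections}. This is immediate by tracing definitions, since both use the formula $[e,m]\cdot\gamma=[e\cdot\gamma,m\cdot\gamma]$. Alternatively, one can invoke Proposition \ref{prop-equivalence-twisted-equivariant-vs-equivariant} to identify $\hat E(M)\to X$ pulled back to $Y$ with $E(M)\to Y$ as $\Gamma$-equivariant bundles, after which the section correspondence is the usual bijection between sections of a bundle on $X$ and $\Gamma$-invariant sections of its pullback to a Galois cover $Y\to X$.
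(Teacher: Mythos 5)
Your proposal is correct and follows essentially the same route as the paper: the paper's proof likewise invokes Proposition~\ref{prop:twisted-equivariant-hat} to identify sections of $\hat{E}(M)$ with twisted $(G,\Gamma)$-equivariant maps $\hat{E}\to M$, and then concludes via Propositions~\ref{prop:EM-Gamma-equivariant} and \ref{prop:EM-Gamma-euivariant-sections}. Your explicit compatibility check of the two $\Gamma$-actions on $E(M)$ is a reasonable addition that the paper leaves implicit.
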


\begin{proof}
  In view of Proposition~\ref{prop:twisted-equivariant-hat} there is a
  one-to-one correspondence between sections of $\hat{E}(M)$ and
  twisted $(G,\Gamma)$-equivariant maps $\hat{E}\to M$. The result now
  follows from Propositions~\ref{prop:EM-Gamma-equivariant} and
  \ref{prop:EM-Gamma-euivariant-sections}.
\end{proof}

\subsection{Coverings and monodromy}
\label{sec:coverings-monodromy}

Let $\hat{E}\to X$ be a principal $\hat{G}$-bundle (we do not assume
that $\hat{E}$ is connected). We obtain a principal $\Gamma$-bundle
\begin{displaymath}
  Y:=\hat{E}/G \xra{p} X,
\end{displaymath}
where the $\Gamma=\hat{G}/G$-action is induced by the $\hat{G}$-action.
Since $\Gamma$ is discrete, $p\colon Y\to X$ is a smooth covering of $X$
with covering group $\Gamma$. Notice that $\Gamma$ acts on $Y$ on the
right.  Write
\begin{equation}
  \label{eq:tilde-p}
  \tilde{p}\colon\hat{E}\to Y=\hat{E}/G
\end{equation}
for the quotient map, then, since $G$ is connected, $\tilde{p}$
induces an isomorphism $\pi_0\hat{E}\xra{\cong}\pi_0Y$.

Choose compatible base points in $X$ and $Y$. Fundamental groups will
be taken with respect to these base points and can be identified with
the corresponding covering groups of the (common) universal covering of $X$ and
$Y$. Let $Y'$ be the
connected component of $Y$ containing the base point.
The $\Gamma$-action on $Y$ induces a $\Gamma$-action on $\pi_0Y$. Let
$\Gamma'\subseteq\Gamma$ be the kernel of the corresponding
homomorphism $\Gamma\to\Aut(\pi_0Y)$. Then $Y'\to X$ is a connected
$\Gamma'$-covering. Moreover, we have the exact sequence
\begin{displaymath}
  1 \to \pi_1Y' \to \pi_1X \xra{w} \Gamma' \to 1.
\end{displaymath}
Identifying $\pi_1X$ with the covering group of the universal covering
$\tilde{X}\to X$, the \textbf{monodromy} $w\colon\pi_1X\to\Gamma'$ takes a
covering transformation of $\tilde{X}\to X$ to the induced covering
transformation of $Y'\to X$ which fixes the base point in
$Y'$. Equivalently, if $[\alpha]\in\pi_1X$, then
$w([\alpha])\in\Gamma'$ is the unique element relating the endpoints
of the lift of the loop $\alpha$ starting at the base point of
$Y'$. We shall sometimes refer to
$w\colon\pi_1X\to\Gamma'\subset\Gamma$ as the \textbf{monodromy of the
  $\hat{G}$-bundle $\hat{E}\to X$} and to $\Gamma'$ as the
\textbf{monodromy group of $\hat{E}$}.

\begin{proposition}
  \label{prop:connected-reduction}
  Let $\hat{E}\to X$ be a principal $\hat{G}$-bundle with monodromy
  $w\colon\pi_1X\to \Gamma'\subseteq\Gamma$. Then $\hat{E}$ admits a
  reduction of structure group to $\hat{G}'\subseteq \hat{G}$, where
  $\hat{G}':=G\times_{\theta,c}\Gamma'$ is defined by
  restricting $\theta$ and $c$ to $\Gamma'$. Moreover, the total space
  of the corresponding $\hat{G}'$-bundle $\hat{E}'\subseteq \hat{E}$ is
  connected, and $Y'=\hat{E}'/G\to X$ is a connected
  $\Gamma'$-covering with surjective monodromy
  $w\colon\pi_1X\to\Gamma'$.
\end{proposition}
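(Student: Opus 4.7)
The plan is to construct the reduction explicitly as $\hat{E}':=\tilde{p}^{-1}(Y')$, where $\tilde{p}\colon\hat{E}\to Y$ is the quotient map of \eqref{eq:tilde-p}. The $\hat{G}$-action on $\hat{E}$ descends via $\tilde{p}$ to the $\Gamma$-action on $Y$, so by the very definition of $\Gamma'$ as $\ker(\Gamma\to\Aut(\pi_0 Y))$, the subgroup $\hat{G}':=q^{-1}(\Gamma')$ preserves $\hat{E}'$. The resulting extension $1\to G\to\hat{G}'\to\Gamma'\to 1$ is defined by the restrictions of $\theta$ and $c$ to $\Gamma'$, which is exactly the isomorphism $\hat{G}'\cong G\times_{\theta,c}\Gamma'$ stated in the proposition.

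To check that $\hat{E}'\to X$ is a principal $\hat{G}'$-bundle I would work fibrewise. Since $X$ is connected, every component of $Y$ is open and closed in $Y$ and projects onto $X$, hence meets every fibre $Y_x$; this forces $\Gamma$ to act transitively on $\pi_0 Y$, yielding an isomorphism of $\Gamma$-sets $\pi_0 Y\cong\Gamma/\Gamma'$. Consequently $Y'\cap Y_x$ is a $\Gamma'$-torsor and, because the fibres of $\tilde{p}$ are $G$-torsors, each fibre $\hat{E}'|_x=\tilde{p}^{-1}(Y'\cap Y_x)$ is a $\hat{G}'$-torsor. To phrase this as a reduction of structure group in the sense of Section~\ref{reductions-twist} I would observe that the composition $\hat{E}\xra{\tilde{p}}Y\to\pi_0 Y\cong\hat{G}/\hat{G}'$ is $\hat{G}$-equivariant and therefore defines a section of $\hat{E}(\hat{G}/\hat{G}')$; the principal $\hat{G}'$-subbundle associated with this section is precisely $\hat{E}'$.

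For connectivity I would use that $G$ is connected. The fibres of $\tilde{p}$ are $G$-torsors and hence connected, so $\tilde{p}$ induces a bijection $\pi_0\hat{E}\xra{\cong}\pi_0 Y$. Under this bijection $\hat{E}'=\tilde{p}^{-1}(Y')$ corresponds to the single component $Y'$ of $Y$ and is therefore itself a single component of $\hat{E}$. By construction $\hat{E}'/G=Y'$, which is the connected $\Gamma'$-covering already produced in the paragraph preceding the proposition, and surjectivity of $w\colon\pi_1 X\to\Gamma'$ is exactly the right-hand map of the exact sequence exhibited there.

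The only point requiring attention — rather than a genuine obstacle — is the identification $\pi_0 Y\cong\Gamma/\Gamma'$, as this is what ensures $\hat{G}'$ is the \emph{full} stabiliser of $Y'\subset Y$ in $\hat{G}$ and hence that $\hat{E}'$ has the expected $\hat{G}'$-torsor fibres. Everything else reduces to standard principal-bundle manipulations together with the properties of $Y\to X$ already established before the proposition.
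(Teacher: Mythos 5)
Your proof is correct and takes essentially the same route as the paper's: the paper likewise sets $\hat{E}'=\tilde{p}^{-1}(Y')$ and simply observes that the $\hat{G}$-action restricts to a $\hat{G}'$-action making $\hat{E}'\to X$ a principal $\hat{G}'$-bundle, with connectedness coming from the fact that $\tilde{p}$ induces $\pi_0\hat{E}\cong\pi_0 Y$. You merely spell out the fibrewise torsor verification and the identification $\pi_0 Y\cong \Gamma/\Gamma'$ that the paper leaves implicit.
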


\begin{proof}
  Let $\hat{E}'=\tilde{p}^{-1}(Y')$, where $\tilde{p}$ was defined in
  \eqref{eq:tilde-p}. Then $\hat{E}'$ is connected and, by
  construction, the $\hat{G}$-action on $\hat{E}$ restricts to a
  $\hat{G}'$-action on $\hat{E}'$ which makes $\hat{E}'\to X$ into a
  principal $\hat{G}'$-bundle.
\end{proof}

We have the following immediate corollary.

\begin{corollary}
  \label{cor:connected-reduction}
  Let $\hat{E}\to X$ be a principal $\hat{G}$-bundle. Then $\hat{E}$
  admits a reduction to the connected component of the identity
  $G\subseteq\hat{G}$ if and only if its monodromy is trivial. \qed
\end{corollary}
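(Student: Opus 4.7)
The plan is to deduce the corollary directly from Proposition \ref{prop:connected-reduction}, using the elementary translation between ``trivial monodromy'' and ``$\Gamma'$ trivial''.

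For the backward implication, I would assume the monodromy $w\colon\pi_1X\to\Gamma'$ is trivial. From the exact sequence
\[
1\to\pi_1 Y'\to\pi_1 X\xra{w}\Gamma'\to 1
\]
displayed just above Proposition \ref{prop:connected-reduction}, the map $w$ is surjective, so its triviality forces $\Gamma'=\{1\}$. Proposition \ref{prop:connected-reduction} then immediately yields a reduction of $\hat{E}$ to $\hat{G}'=G\times_{(\theta,c)}\{1\}=G$, which is the desired reduction.

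For the forward implication, I would suppose $\hat{E}$ admits a reduction to a principal $G$-bundle $E\subseteq\hat{E}$, so that $\hat{E}\cong E\times_G\hat{G}$ as principal $\hat{G}$-bundles. Quotienting by $G$ gives
\[
Y=\hat{E}/G\cong (E\times_G\hat{G})/G\cong E\times_G(\hat{G}/G)=E\times_G\Gamma.
\]
Since $G$ is normal in $\hat{G}$, the induced left action of $G$ on $\hat{G}/G=\Gamma$ is trivial, so $Y\cong X\times\Gamma$ as $\Gamma$-coverings. Consequently $\pi_0 Y=\Gamma$ with $\Gamma$ acting on itself by right multiplication, which is faithful; hence $\Gamma'=\ker(\Gamma\to\Aut(\pi_0 Y))=\{1\}$, and the monodromy $w$ (taking values in $\Gamma'$) is trivial.

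There is no genuine obstacle: once Proposition \ref{prop:connected-reduction} is in hand, the corollary is formal. The only step requiring a moment's thought is the identification $E\times_G\Gamma\cong X\times\Gamma$, which relies on the elementary fact that a normal subgroup acts trivially by left multiplication on its quotient.
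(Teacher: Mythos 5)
Your proof is correct and takes the route the paper intends: the corollary is stated as an immediate consequence of Proposition~\ref{prop:connected-reduction} with no written proof, and your two implications supply exactly the missing details (the backward one verbatim from the proposition, the forward one via the observation that a reduction to $G$ trivialises the covering $Y=\hat{E}/G$). The only simplification available is that in the forward direction the inclusion $X=E/G\hookrightarrow\hat{E}/G=Y$ already gives a section of $Y\to X$, so $Y$ is the trivial $\Gamma$-covering without passing through $E\times_G\Gamma$; but your normality argument is equivalent and equally valid.
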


In the study of principal $\hat{G}$-bundles on $X$, the first
topological invariant to fix is the monodromy. Moreover,
in view of Proposition~\ref{prop:connected-reduction} we may reduce
to the case when the monodromy group is $\Gamma$ (and the total space
is connected). Putting together the preceding results we
then have the following, which transforms this study into the
study of twisted bundles with connected structure group.

\begin{theorem}\label{category-monodromy}
  Let $w\colon\pi_1X\to\Gamma$ be a fixed surjective homomorphism and
  let $Y\to X$ be the corresponding connected covering.
  The category of principal $\hat{G}$-bundles on $X$ with monodromy
  $w$ is equivalent to the category $\mathcal{C}_1$ of
  $(\theta,c)$-twisted $\Gamma$-equivariant principal $G$-bundles on
  $Y$, defined above.
\end{theorem}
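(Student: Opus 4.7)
My plan is to derive the theorem by combining the equivalence $\mathcal{C}_1 \simeq \mathcal{C}_2$ of Proposition~\ref{prop:G-Ghat-equivalence} with an identification of $\mathcal{C}_2$ as the category of principal $\hat{G}$-bundles on $X$ with monodromy $w$.

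For the identification, I would first recall that since $w\colon\pi_1X\to\Gamma$ is surjective, classical covering space theory guarantees that $Y\to X$ is, up to isomorphism of $\Gamma$-coverings, the unique connected $\Gamma$-covering of $X$ realising $w$ as its monodromy. Next, I would verify the correspondence at the level of objects: given a principal $\hat{G}$-bundle $\hat{E}\to X$ with monodromy $w$, its monodromy group is all of $\Gamma$, so by Proposition~\ref{prop:connected-reduction} the associated $\Gamma$-covering $\hat{E}/G\to X$ is connected and has monodromy $w$, hence is isomorphic to $Y$ as $\Gamma$-covering over $X$. This places $\hat{E}$ in $\mathcal{C}_2$. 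Conversely, any object of $\mathcal{C}_2$ admits an isomorphism $\hat{E}/G\cong Y$ covering the identity on $X$, from which the monodromy of $\hat{E}$ is computed as that of $Y$, namely $w$.

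It remains to check that the morphisms agree, but this is essentially automatic: in both categories, a morphism is simply a $\hat{G}$-equivariant bundle map covering the identity on $X$, and no auxiliary choice of identification $\hat{E}/G\cong Y$ enters the data. Chaining the resulting equivalence $\mathcal{C}_2\simeq\{\hat{G}\text{-bundles with monodromy }w\}$ with Proposition~\ref{prop:G-Ghat-equivalence} yields the claim. The main delicate point I anticipate is making precise how the ambiguity in the definition of ``monodromy $w$'' (due to the choice of base point in a fibre of $\hat{E}/G\to X$, which conjugates $w$ by an element of $\Gamma$) is matched by the ambiguity in choosing the isomorphism $\hat{E}/G\cong Y$ in the definition of $\mathcal{C}_2$; this corresponds exactly to the automorphism group $\Gamma=\Aut(Y/X)$ acting on the set of such isomorphisms, and the induced freedom in pullback along $\bar{\phi}\in Z(\Gamma)$ is already built into the morphisms of $\mathcal{C}_1$ via Proposition~\ref{prop:morhisms-fixed-covering}.
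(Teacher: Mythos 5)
Your proposal is correct and follows exactly the route the paper takes: the paper's proof is the one-line reduction to Proposition~\ref{prop:G-Ghat-equivalence}, implicitly leaving to the reader the identification (via Proposition~\ref{prop:connected-reduction} and covering space theory) of $\mathcal{C}_2$ with the category of principal $\hat{G}$-bundles on $X$ with monodromy $w$, which is precisely what you spell out. Your remarks on the base-point/conjugation ambiguity and on the matching of morphisms are sensible elaborations of details the paper suppresses.
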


\begin{proof}
  This follows from Proposition~\ref{prop:G-Ghat-equivalence}.
\end{proof}


Let $\nog$ be the normalizer of $\Gamma'$ in $\Gamma$. Every element
$\gamma\in\nog$ defines an element $(1,\gamma)\in\hatg$ which acts by
conjugation on $\hatgg$ since, for every $(g,\gamma')\in\hatgg$,
$(1,\gamma)(g,\gamma')(1,\gamma)^{-1}$ is equal to
$(x,\gamma\gamma'\gamma^{-1})$ for some $x\in G$; note that the
converse is also true, i.e. if conjugation by $(1,\gamma)$ preserves
$\hatgg$ then $\gamma\in\nog$. This defines an action of $\nog$ on the
set $\cM'$ of equivalence classes of $\hatgg$-bundles by extension of structure group. Moreover, this action preserves the set $\cMg$ of equivalence classes of $\hatgg$-bundles $E$ such that $E/G$ has monodromy equal to $\Gamma'$.

\begin{proposition}
Let $\cM$ be the set of equivalence classes of $\hatg$-bundles, and let $\cMg$ be the set of equivalence classes of $\hatgg$-bundles $E$ such that $E/G$ has monodromy $\Gamma'$. Assume that $G$ is connected. Then the extension of structure group morphism $\cMg\to\cM$ factors through an embedding $\cMg/\nog\hookrightarrow\cM$, where $\nog$ acts by extension of structure group.
\end{proposition}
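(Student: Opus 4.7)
The statement decomposes into two claims: first, the extension-of-structure-group map $\cMg\to\cM$ is constant on $\nog$-orbits, so factors through $\cMg/\nog$; second, the factored map $\cMg/\nog\to\cM$ is injective. The assumption that $G$ is connected will enter crucially in the injectivity part, where it ensures that connected components of the $\hatg$-bundles biject with connected components of the associated $\Gamma$-coverings.

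For the factorisation, I would fix $\gamma\in\nog$ and observe that the $\nog$-action sends a bundle $E$ to the bundle $E^{\gamma}$ with $\hatgg$-action twisted by the inner automorphism $\Ad_{(1,\gamma)}$ of $\hatgg$. Since $\Ad_{(1,\gamma)}$ on $\hatgg$ is the restriction of the inner automorphism $\Ad_{(1,\gamma)}$ of $\hatg$, the two extensions $E\times_{\hatgg}\hatg$ and $E^{\gamma}\times_{\hatgg}\hatg$ will be isomorphic as $\hatg$-bundles. An explicit candidate isomorphism is $[e,\hat g]\mapsto [e,(1,\gamma)^{-1}\hat g]$, whose well-definedness and $\hatg$-equivariance reduce to short manipulations with the defining equivalence relation of the associated bundle.

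For injectivity, suppose $E_1,E_2\in\cMg$ and $\phi\colon\hat E_1\xra{\cong}\hat E_2$ is a $\hatg$-bundle isomorphism of their extensions. The plan is to descend $\phi$ to a $\Gamma$-equivariant isomorphism $\bar\phi\colon\hat E_1/G\xra{\cong}\hat E_2/G$ of $\Gamma$-coverings of $X$. Under the monodromy hypothesis $E_i/G$ is the connected component of $\hat E_i/G$ containing a chosen basepoint, with stabiliser in $\Gamma$ precisely $\Gamma'$. Since $\hat E_2/G$ is the $\Gamma$-covering induced from the $\Gamma'$-covering $E_2/G$, its connected components are of the form $(E_2/G)\cdot\gamma$ for $\gamma$ running over $\Gamma'\backslash\Gamma$, so $\bar\phi(E_1/G) = (E_2/G)\cdot\gamma_0$ for some $\gamma_0\in\Gamma$; $\Gamma$-equivariance of $\bar\phi$ together with equality of stabilisers then forces $\gamma_0^{-1}\Gamma'\gamma_0=\Gamma'$, i.e.\ $\gamma_0\in\nog$.

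The final step will be to restrict $\phi$ to $\tilde{p}^{-1}(E_1/G)=E_1\subset\hat E_1$, obtaining a $\hatgg$-equivariant isomorphism of $E_1$ with $\tilde{p}^{-1}((E_2/G)\cdot\gamma_0)=\{[e,(1,\gamma_0)]:e\in E_2\}\subset\hat E_2$. Under the natural bijection $E_2\to\{[e,(1,\gamma_0)]\}$ given by $e\mapsto[e,(1,\gamma_0)]$, a short calculation with the equivalence relation (using $\gamma_0\in\nog$) shows that the ambient $\hatgg$-action corresponds to the $\hatgg$-action on $E_2$ twisted by $\Ad_{(1,\gamma_0)}$, which is the $\nog$-action of $\gamma_0$ (up to inversion) on $E_2$. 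This will yield $[E_1]=[E_2]$ in $\cMg/\nog$. The main technical obstacle I anticipate is keeping track of conventions (left versus right actions, and the direction in which the inner automorphism twists the action) so as to confirm that the geometric identification of the subbundle $\{[e,(1,\gamma_0)]\}$ with $E_2$ really does realise the $\nog$-action as defined prior to the proposition.
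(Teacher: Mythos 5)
Your proposal is correct and follows essentially the same route as the paper: both parts decompose identically, the factorisation rests on the same observation that $\Ad_{(1,\gamma)}$ becomes inner after extending to $\hatg$, and the injectivity argument uses connectedness of $G$ to decompose $\hat E$ into components indexed by $\Gamma/\Gamma'$ and a stabiliser computation to force $\gamma_0\in\nog$. The only cosmetic difference is that you compute the stabiliser on $\pi_0$ of the quotient $\Gamma$-coverings, whereas the paper computes the stabiliser of the subset $Es\subset\hat E$ directly under the $\hatg$-action; since $\pi_0\hat E\cong\pi_0(\hat E/G)$ these are the same computation.
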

\begin{proof}
Consider a $\hatgg$-bundle $E$ and $\gamma\in\nog$. The element $s:=(1,\gamma)\in \hatg$ determines an automorphism $\theta:=\Int_{s^{-1}}$ of $\hatgg$ which defines an extension of structure group $\theta(E)$. Let $\hat E$ be the extension of structure group of $E$ by the embedding $\hatgg\hookrightarrow\hatg$. Then the stabilizer of $E$ under the $\hatg$-bundle action is equal to $\hatgg$, which implies that the stabilizer of $Es\subset\hat E$ is equal to $s^{-1}\hatgg s=\hatgg$; in other words, $Es$ determines a reduction of structure group of $\hat E$ to $\hatgg$. Moreover, the map
$$E\to Es;\,e\mapsto es$$
induces an isomorphism of $G$-bundles $\theta(E)\cong Es$. Indeed,
recall that $\theta(E)$ may be regarded as the $\hatgg$-bundle which
has the same total space as $E$ and $G$-action determined by
$$E\times\hatgg\to E;\, (e,g)\mapsto e\theta^{-1}(g).$$
But we have
$$e\theta^{-1}(g)s=esgs^{-1}s=esg,$$
which shows that the induced map $\theta(E)\to Es$ is $\hatgg$-equivariant. This implies that $Es$, which is a reduction of structure group of $\hat E$ to $\hatgg$, is isomorphic to $\theta(E)$; in other words, $\hat E$ is the extension of structure group of $\theta(E)$ by the embedding $\hatgg\hookrightarrow\hatg$.

Conversely, let $E$ and $E'$ be two $\hatgg$-bundles such that the monodromies of $E/G$ and $E'/G$ are both equal to $\Gamma'$; since $G$ is connected, this implies that both $E$ and $E'$ are connected. Assume that they have the same extension of structure group $\hat E$ to $\hatg$. Note that $\hat E$ has an explicit decomposition into connected components, namely
\begin{equation*}
    E=\bigsqcup_{\gamma\Gamma\in \Gamma/\Gamma'}E(1,\gamma),
\end{equation*}
where each coset in $\Gamma/\Gamma'$ has one and only one representative component in the union. Thus $E'$ must be equal to one of these components, say $E(1,\gamma)$ for some $\gamma\in\Gamma$. Let $s:=(1,\gamma)$ and $\theta:=\Int_{s^{-1}}$. The first observation is that, since $\hat E$ is the extension of structure group of $E'$ by the prescribed embedding of $\hatgg$ in $\hatg$, the group $\hatgg$ is equal to the stabilizer of $Es$ by the $\hatg$-bundle action. But, on the other hand, the fact that the stabilizer of $E$ is $\hatgg$ implies that the stabilizer of $Es$ is equal to $s^{-1}\hatgg s=\theta(\hatgg)$; this implies that $\hatgg=\theta(\hatgg)$ or, equivalently, $\gamma\in\nog$. Finally, as in the previous paragraph, the map $E\to Es$ given by the action of $s$ induces an isomorphism of $G$-bundles $\theta(E)\cong Es=E'$, as required.
\end{proof}


\section{Non-abelian sheaf cohomology and group extensions}
\label{non-abelian-cohomology}

Let $X$ be  a topological space. We may assume  that $X$ is paracompact and Hausdorff, since  we will be mostly interested in the case in which $X$ is a differenciable real manifold or a complex manifold.
Let $G$ be a topological group, or a real Lie group (respectively a
complex Lie group) if we are working on the smooth category
(respectively holomorphic category).

In this section we briefly recall some basic facts on  non-abelian cohomology, and how this can be used to describe the set of equivalence classes of  $G$-bundles over $X$, since we will elaborate on this in the next section when we  add  twisted equivariant
structures.
A good reference for this material is the paper of
Grothendieck \cite{grothendieck} (see also Hirzebruch \cite{hirzebruch}).

\subsection{Non-abelian cohomology and $G$-bundles}
\label{nac-sheaves}

A {\bf sheaf} of (not necessarily abelian) groups over $X$ is defined as a triple $\SSS=(S,X,\pi)$, where $S$ is a topological space and $\pi: S\to X$ is a local homeomorphism, so that the stalk $S_x=\pi^{-1}(x)$ over every point $x\in X$ has the structure of a group, and for every  $\alpha$, $\beta$ in $S_x$, the element $\alpha\beta^{-1}$ depends continuously on $\alpha$ and $\beta$. Notice that the fact that $\pi$ is a local homeomorphism implies that the topology of $S$ induces the discrete topology on every stalk  $S_x$.

Recall that if $U$ is a open subset of $X$ we can define the group of sections $H^0(U,\SSS)$ and in particular the group $H^0(X,\SSS)$ of global sections of $\SSS$.

While higher cohomology groups cannot be defined in the case of non-abelian groups, it is nevertheless possible to define a {\bf cohomology set}  $H^1(X,\SSS)$ with a distinguished element. In the abelian case this coincides with the usual first cohomology group of $\SSS$.

To define $H^1(X,\SSS)$, let $\UUU=\{U_i\}_{i\in I}$  be an open covering of $X$. A $\UUU$-cocycle is a function $f$ which associates to each order pair $i,j$ of elements in $I$, an element $f_{ij}\in H^0(U_i\cap U_j,\SSS)$ such that
$$
f_{ij}f_{jk}=f_{ik}\;\;\mbox{in}\;\; U_i\cap U_j \cap U_k\;\;\mbox{for all}\;\;
i,j,k\in I.
$$
The set of $\UUU$-cocycles is denoted by $Z^1(\UUU,\SSS)$. Cocycles $f$ and $f'$ are said to be equivalent if for each $i\in I$ there exists an element $g_i\in H^0(U_i,\SSS)$ such that
$$
f'_{ij}=g_if_{ij}g_j^{-1}\;\;\mbox{in}\;\; U_i\cap U_j \;\;\mbox{for all}\;\; i,j\in I.
$$
The cohomology set $H^1(\UUU,\SSS)$ is the set of equivalence classes of $\UUU$-cocycles, and  the cohomology set $H^1(X,\SSS)$
is defined as a direct limit of the sets  $H^1(\UUU,\SSS)$ as $\UUU$ runs over all open covering of $X$
(see \S 3 of  \cite{hirzebruch} for details).

A  case of particular  interest to us is  that in which $G$ is a group and
$\SSS=(S,X,\pi)$ is the sheaf of germs of functions with values in $G$. This sheaf will be denoted by
$\underline{G}$. The set $S$ here   is the product $X\times G$ and
$\pi$ is the projection onto $X$. If $X$ is a topological space
(respectively  a differentiable or complex manifold)  and  $G$ is a
topological group (respectively real Lie group or complex Lie group),
then $\underline{G}$ is the sheaf for which
$H^0(U,\underline{G})$ is the group of continuous (respectively differentiable or holomorphic) functions.

With $\underline{G}$ as above, the cohomology set $H^1(X,\underline{G})$  parameterizes
equivalence classes of principal $G$-bundles over $X$, in the topological, differentiable or holomorphic category,  according to the structures considered on
$X$ and $G$. This can be easily  seen by considering  an open covering of $X$ and the trivializations and transition functions of a principal bundle for this covering. The transition functions satisfy precisely the cocycle condition. In this case the distinguished element in  $H^1(X,\underline{G})$ is of course the trivial principal $G$-bundle over $X$.

\subsection{Group extensions and induced exact non-abelian cohomology sequences}
\label{section-extensions-non-abelian-cohomology}

Let $\Gamma$ be  a finite group, and  $\theta:\Gamma\to\Aut(G)$ be a homomorphism, so that the action of $\Gamma$ on $G$ defined by $\theta$ is continuous.  Let $Z=Z(G)$ be the centre of $G$,  and  $c\in Z^2_{\theta}(\Gamma,Z)$ be a $2$-cocycle.
Consider the  extension of groups
\begin{equation}\label{extension-coh}
  1 \to G \to \hat{G} \to \Gamma \to 1
\end{equation}
where the group structure of $\hat{G}$ is given   by
(\ref{eq:cocycle-product}). Naturally, if $G$ is a real Lie group
(respectively  complex Lie group) we will require that the action of
$\Gamma$ on $G$ be differentiable (respectively  holomorphic).

The extension (\ref{extension-coh}) defines an extension of sheaves
\begin{equation}\label{sheaf-extension}
  1 \to \underline{G} \to \underline{\hat{G}} \to \Gamma \to 1,
\end{equation}
where the sheaves $\underline{G}$ and  $\underline{\hat{G}}$ are  defined as in Section \ref{nac-sheaves}, and we denote the sheaf
$\underline{\Gamma}$ simply by $\Gamma$ since $\Gamma$ is a finite group.

Associated to (\ref{sheaf-extension}) there is a long exact sequence of cohomology sets with distinguished elements
\begin{equation}\label{long-sequence}
H^0(X,\Gamma)\to H^1(X,\underline{G})\to H^1(X,\underline{\hat{G}})  \xra{\pi} H^1(X,\Gamma).
\end{equation}
In Proposition 5.6.2 and following Corollary of \cite{grothendieck}, Grothendieck gives a characterization of the inverse image by $\pi$ of an element of $ H^1(X,\Gamma)$ when this inverse image is nonempty.

Let  $E_\Gamma$ be a  principal $\Gamma$-bundle over $X$ and  $[E_\Gamma] \in H^1(X,\Gamma)$ be  its  corresponding  equivalence class. Assume that $E_\Gamma$ can be lifted to a principal $\hat{G}$-bundle $E_{\hat{G}}$, implying, in particular, that  the inverse image of $[E_\Gamma]$ under $\pi$  is nonempty.  Notice that this is indeed the case
if the cocycle $c$ is trivial and $\hat{G}$ is then the semidirect product.
The group $\hat{G}$ acts by conjugation on $G$, and we consider the bundle of groups $E_{\hat{G}}(G)$ associated to the principal bundle  $E_{\hat{G}}$ via this action. Notice that in the semidirect product case, if $E_{\hat{G}}$ is the extension of structure group of $E_{\Gamma}$ via the natural embedding $\Gamma\hookrightarrow\hat{G}$ then
$E_{\hat{G}}(G)=E_\Gamma(G)$, where $E_\Gamma(G)$ is the bundle associated to $E_\Gamma$ via the action of $\Gamma$ on $G$ given by $\theta$. We can also consider the  adjoint bundle of groups $E_\Gamma(\Gamma)$ associated to $E_\Gamma$ via the adjoint action of $\Gamma$ on itself.

With all this in place, the answer given  by Grothendieck is the following.
\begin{proposition}\label{prop-grothendieck}
With the above notation we have the identification
  $$
  \pi^{-1}([E_\Gamma])= H^1(X,E_{\hat{G}}(G))/H^0(X,E_\Gamma(\Gamma)).
  $$
\end{proposition}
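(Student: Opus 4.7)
The plan is to parametrize the lifts of $[E_\Gamma]$ explicitly by Čech cocycles, and then describe the quotient of that parametrization by the action of $\Gamma$-bundle automorphisms of $E_\Gamma$. Fix a trivializing open cover $\UUU=\{U_i\}$ of $X$ for both $E_\Gamma$ and the distinguished lift $E_{\hat G}$; let $g_{ij}\colon U_i\cap U_j\to\Gamma$ and $\hat g_{ij}\colon U_i\cap U_j\to\hat G$ be the corresponding transition cocycles, chosen so that the image of $\hat g_{ij}$ in $\Gamma$ is $g_{ij}$. Any other principal $\hat G$-bundle $E'_{\hat G}$ equipped with an identification of $E'_{\hat G}/G$ with $E_\Gamma$ (after refining $\UUU$ if necessary) must have transition functions of the form $\hat g'_{ij}=a_{ij}\hat g_{ij}$ with $a_{ij}\colon U_i\cap U_j\to G$, since projecting to $\Gamma$ must still recover $g_{ij}$. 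Writing out the $\hat G$-cocycle condition for $\hat g'_{ij}$ and simplifying using the cocycle condition for $\hat g_{ij}$ yields
$$a_{ij}\cdot\Ad_{\hat g_{ij}}(a_{jk})=a_{ik},$$
which is precisely the 1-cocycle condition with values in the sheaf of groups $E_{\hat G}(G)$, whose local trivializations on $U_i$ are $G$, glued by $\Ad_{\hat g_{ij}}$. An analogous calculation shows that a $\hat G$-isomorphism between two such lifts covering the identity on the quotient is locally given by sections of $G$, and it transforms the two $a$-cocycles into each other by a coboundary in $E_{\hat G}(G)$. Passing to the direct limit over refinements identifies the set of lifts equipped with a chosen identification of the quotient with $E_\Gamma$, up to equivalence, with $H^1(X,E_{\hat G}(G))$.

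Next I would forget the chosen identification. Two lifts $E'_{\hat G}, E''_{\hat G}$ map to the same element of $\pi^{-1}([E_\Gamma])$ iff they are isomorphic as $\hat G$-bundles, with no constraint on the induced map of quotients. Hence $\pi^{-1}([E_\Gamma])$ is obtained from the above pointed set by further quotienting by the group $\Aut_\Gamma(E_\Gamma)$ of $\Gamma$-bundle automorphisms of $E_\Gamma$, acting by composing the chosen identification with the automorphism. Standard facts identify $\Aut_\Gamma(E_\Gamma)$ with $H^0(X,E_\Gamma(\Gamma))$. Lifting a global section $(\alpha_i)\in H^0(X,E_\Gamma(\Gamma))$, with $\alpha_i=g_{ij}\alpha_j g_{ij}^{-1}$, to local elements $\hat\alpha_i\in\hat G$ and computing shows that the induced action on the cocycle $\{a_{ij}\}$ is the expected twisting action.

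The main obstacle is to check that this action of $H^0(X,E_\Gamma(\Gamma))$ on $H^1(X,E_{\hat G}(G))$ is well defined, that is, independent of the choice of local lifts $\hat\alpha_i$ of a given global section and of the trivializing cover. Two different local lifts of the same section differ by a local section of $G$, and the resulting discrepancy in the action is exactly a coboundary in $E_{\hat G}(G)$; verifying this compatibility is the technical heart of the argument. Once this is established, the orbits of the action on $H^1(X,E_{\hat G}(G))$ correspond bijectively to the equivalence classes in $\pi^{-1}([E_\Gamma])$, giving the claimed identification.
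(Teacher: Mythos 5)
Your argument is correct and follows essentially the same route as the paper's own (sketched) proof: both parametrize lifts with a fixed identification of the quotient with $E_\Gamma$ by \v{C}ech cocycles of the form $a_{ij}$ satisfying $a_{ij}\Ad_{\hat g_{ij}}(a_{jk})=a_{ik}$, identify these with $Z^1(\UUU,E_{\hat G}(G))$, and then quotient by $H^0(X,E_\Gamma(\Gamma))$ acting as the automorphisms of $E_\Gamma$ once the identification is forgotten. The well-definedness issue you flag at the end is likewise left implicit in the paper's sketch, so your level of detail is comparable.
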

\begin{proof}[Sketch of the proof]
  Throughout this proof we denote by $\pi$ the extension of structure
  group map from the set of isomorphism classes of $\hat G$-bundles to
  the set of isomorphism classes of $\Gamma$-bundles induced by
  (\ref{extension-coh}) by abuse of notation. Fix an isomorphism
  $\pi(E_{\hat G})\cong E_\Gamma$. Let $E'_{\hat G}$ be a
  $\hat G$-bundle over $X$ such that $\pi(E'_{\hat G})\cong E_\Gamma,$
  and fix such an isomorphism
  $f:E'_{\hat G}\xrightarrow{\sim} E_\Gamma$.  Let
  $\UUU=\{U_i\}_{i\in I}$ be a countable open cover of $X$
  trivializing both $E_{\hat G}$ and $E'_{\hat G}$. Let $(U_i,e_i)$
  and $(U_i,e'_i)$ be a system of trivializations of $E_{\hat G}$ and
  $E'_{\hat G}$ respectively whose images under $\pi$ (composed with
  the chosen isomorphisms) is the same trivialization
  $(U_i,\overline e_i)$ of $E_\Gamma$. Set $U_{ij}:=U_i\cap U_j$ and
  let $(U_{ij},g_{ij})$, $(U_{ij},g'_{ij})$ such that $e_j=e_ig_{ij}$
  and $e_j'=e_i'g_{ij}'$ for each $i$ and $j\in I$. Then we have that
  $g'_{ij}=h_{ij}g_{ij}$ for some set of functions
  $h_{ij}:U_{ij}\to G$. We claim that $(U_{ij},(e_i,h_{ij}))$ is a
  1-cocycle in $Z^1(\UUU,E_{\hat{G}}(G))$. Indeed, the fact that both
  $g_{ij}$ and $g'_{ij}$ are 1-cocycles implies:
    \begin{equation*}
        h_{ij}(g_{ij}h_{jk}g_{ij}^{-1})g_{ij}g_{jk}=h_{ij}g_{ij}h_{jk}g_{jk}=h_{ik}g_{ik}=h_{ik}g_{ij}g_{jk},
    \end{equation*}
    thus $h_{ij}g_{ij}h_{jk}g_{ij}^{-1}=h_{ik}$. Hence:
    \begin{align*}
        (e_i,h_{ij})(e_j,h_{jk})=(e_i,h_{ij})(e_ig_{ij},h_{jk})=(e_i,h_{ij})(e_i,g_{ij}h_{jk}g_{ij}^{-1})=(e_i,h_{ij}g_{ij}h_{jk}g_{ij}^{-1})=\\(e_i,h_{ik}),
    \end{align*}
    as required. This identifies the trivialization $(U_i,e_i')$ of $E'_{\hat G}$ with an element of $Z^1(\UUU,E_{\hat G}(G))$. 
    
    Another trivialization of $E'_{\hat G}$ whose image under $f\circ\pi$ is equal to $(U_i,\overline e_i)$ provides an isomorphic 1-cocycle.
    Similarly, given a $G$-bundle $E''_{\hat G}$, an isomorphism $\pi(E''_{\hat G})\cong E_{\Gamma}$ and an isomorphism $E'_{\hat G}\cong E''_{\hat G}$ whose induced automorphism of $E_{\Gamma}$ is the identity, $E''_{\hat G}$ yields an equivalent 1-cocycle. Thus we get a bijection between equivalence classes of pairs $(E'_{\hat G},f:\pi(E'_{\hat G})\xrightarrow{\sim}E_\Gamma)$, where the equivalence relation is an isomorphism of $\hat G$-bundles inducing the identity on $E_\Gamma$, and $H^1(X,E_{\hat{G}}(G))$. If we now consider any induced automorphism of $E_\Gamma$ and we forget the isomorphisms $f$ then we get a bijection between the set of isomorphism classes of $\hat G$-bundles $E'_{\hat G}$ and $H^1(X,E_{\hat{G}}(G))/H^0(X,E_{\Gamma}(\Gamma))$, where $H^0(X,E_{\Gamma}(\Gamma))$ is the group of automorphisms of the $\Gamma$-bundle $E_{\Gamma}$.
\end{proof}

  Notice that if   $E_\Gamma$ is the trivial $\Gamma$-bundle --- the distinguished
  element in  $H^1(X,\Gamma)$ --- then Proposition \ref{prop-grothendieck} follows simply from the exactness of  (\ref{long-sequence}) and
$$
  \pi^{-1}([E_\Gamma])= H^1(X,\underline{G})/H^0(X,\Gamma).
$$

    \begin{remark}
The characterization given by Grothendieck in \cite{grothendieck} applies  in fact  to  any extension of topological groups and actually to any extension of sheaves of groups. Obviously the answer in our situation is particularly simple, considering the special structure of the extension (\ref{extension-coh}).
 \end{remark}

The centre of $G$ is a normal subgroup of $\hat{G}$ and we can consider the quotient group $\check{G}= \hat{G}/Z$. The group $\check{G}$  fits into an extension
  $$
  1 \to G/Z \to \check{G} \to \Gamma \to 1
  $$
  where the group structure  of $\check{G}$ is the semidirect product one.
  If we consider the induced sequence of sheaves,  any
  $[E_\Gamma]\in H^1(X,\Gamma)$ can be lifted now  to an element $[E_{\check{G}}]
\in H^1(X,\underline{\check{G}})$, and the question of when this can be lifted to $H^1(X,\underline{\hat{G}})$ can be analysed in terms of the exact sequence of sheaves
$$
1 \to \underline{Z} \to \underline{\hat{G}} \to \underline{\check{G}} \to 1.
  $$
Since $Z$ is abelian one can define the cohomology group
$H^2(X,\underline{Z})$, and  this sequence induces a long exact sequence
\begin{equation}\label{eq-long-exact-seq-lifting-ghat}
     H^1(X,\underline{\hat{G}}) \to H^1(X,\underline{\check{G}})  \xra{\sigma}
 H^2(X,\underline{Z}).
\end{equation}
The element  $[E_{\check{G}}]$  can be lifted to an element $[E_{\hat{G}}]\in  H^1(X,\underline{\hat{G}})$ if and only if $\sigma([E_{\check{G}}])$ is the neutral element of the group $H^2(X,\underline{Z})$.

\begin{remark}
If $\sigma([E_{\check{G}}])$ is not the neutral element one can still lift
$[E_{\check{G}}]$ to a twisted principal $G$-bundle or equivalently, a torsor over a certain sheaf of groups (see \cite{karoubi} for example), but we will not consider these objects in this paper.
\end{remark}

A connected principal $\Gamma$-bundle $E_\Gamma$ over $X$ is of course the same as a
finite Galois covering $Y:=E_\Gamma\to X$ with Galois group $\Gamma$. In view of the results in Section \ref{coverings-twist}, and in particular Theorem
\ref{category-monodromy}, we conclude the following.

\begin{proposition}\label{grothendieck-twisted}
Let $E_{\hat{G}}$ be a connected $\hat G$-bundle over $X$.

(1) The cohomology set $H^1(X,E_{\hat{G}}(G))$ is in one-to-one correspondence with the set of equivalence classes
  of $(\theta,c)$-twisted $\Gamma$-equivariant $G$-bundles on $Y$, where equivalence means that the induced map on $Y$ is the identity.

(2) The quotient
$H^1(X,E_{\hat{G}}(G))/H^0(X,E_\Gamma(\Gamma))$ is in one-to-one correspondence with the set of equivalence classes of  $(\theta,c)$-twisted $\Gamma$-equivariant $G$-bundles on $Y$, where equivalence allows now  that the induced map on $Y$ be an element of the centre of $\Gamma$.
\end{proposition}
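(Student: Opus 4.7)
The proof plan is to combine Proposition~\ref{prop-grothendieck} (and the construction in its proof sketch) with the categorical equivalence in Theorem~\ref{category-monodromy}, by tracking carefully what it means to fix, or not fix, the identification $\hat E/G\cong E_\Gamma = Y$.

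\medskip

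\noindent\textbf{Part (1).} I would revisit the proof sketch of Proposition~\ref{prop-grothendieck}. That sketch shows that elements of $H^1(X,E_{\hat G}(G))$ are in bijection with equivalence classes of pairs $(E'_{\hat G},f)$, where $E'_{\hat G}$ is a $\hat G$-bundle on $X$ and $f\colon \pi(E'_{\hat G})\xrightarrow{\sim} E_\Gamma$ is a chosen $\Gamma$-equivariant isomorphism, the equivalence relation being isomorphisms of $\hat G$-bundles that induce the identity on $E_\Gamma$. Now I would invoke Proposition~\ref{prop:principal-Ghat-fixed-cover}, which identifies the category of such pairs with the subcategory of $\mathcal{C}_1$ whose morphisms induce the identity on $Y=E_\Gamma$. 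Since $E_{\hat G}$ is connected, its monodromy surjects onto $\Gamma$, so Theorem~\ref{category-monodromy} applies and identifies objects of $\mathcal{C}_1$ with $(\theta,c)$-twisted $\Gamma$-equivariant $G$-bundles on the connected covering $Y\to X$. Restricting morphisms to those inducing the identity on $Y$ produces exactly the equivalence relation stated in (1).

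\medskip

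\noindent\textbf{Part (2).} I would deduce this from part (1) together with the full statement of Proposition~\ref{prop-grothendieck}. Indeed, Grothendieck's formula gives
\[
\pi^{-1}([E_\Gamma]) \;=\; H^1(X,E_{\hat G}(G))/H^0(X,E_\Gamma(\Gamma)),
\]
and by Theorem~\ref{category-monodromy} the left-hand side is in bijection with isomorphism classes of objects in $\mathcal{C}_1$. The key observation to supply is the interpretation of the quotient on the right. Sections of the adjoint bundle $E_\Gamma(\Gamma)$ correspond to $\Gamma$-bundle automorphisms of the connected Galois covering $Y\to X$; because the $\Gamma$-action on $Y$ is free and transitive on fibers, such automorphisms are precisely the covering transformations that commute with the $\Gamma$-action, i.e.\ they form the group $Z(\Gamma)$. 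Chasing the action through the bijection of part (1), changing $f$ by such an automorphism $\bar\phi\in Z(\Gamma)$ corresponds exactly to replacing the identification $E/G\cong Y$ by its composition with $\bar\phi$. Thus taking the $H^0(X,E_\Gamma(\Gamma))$-quotient corresponds precisely to enlarging the morphisms in $\mathcal{C}_1$ from those covering the identity of $Y$ to those covering an arbitrary element of $Z(\Gamma)$, which is the equivalence relation defining $\mathcal{C}_1$ as in Proposition~\ref{prop:morhisms-fixed-covering}.

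\medskip

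\noindent\textbf{Main obstacle.} The routine steps are tracking cocycles; the delicate point is the identification $H^0(X,E_\Gamma(\Gamma))\cong Z(\Gamma)$ and verifying that the natural $H^0(X,E_\Gamma(\Gamma))$-action on $H^1(X,E_{\hat G}(G))$ described in the proof of Proposition~\ref{prop-grothendieck} translates, under the correspondence of part (1), into the action of $Z(\Gamma)$ by changing the trivialization $E/G\cong Y$. This compatibility is essentially guaranteed by Proposition~\ref{prop:morhisms-fixed-covering}, which already pins down $\bar\phi\in Z(\Gamma)$ as the unique ambiguity once $\hat G$-equivariance and freeness of the $\Gamma$-action on $Y$ are imposed, but I would write this verification out explicitly using a common trivializing cover as in the proof sketch of Proposition~\ref{prop-grothendieck}.
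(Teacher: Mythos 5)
Your proposal is correct and follows essentially the same route as the paper: part (1) is obtained by combining the bijection with pairs $(E'_{\hat G},f)$ from the proof of Proposition~\ref{prop-grothendieck} with Proposition~\ref{prop:morhisms-fixed-covering}, and part (2) follows from connectedness of $E_{\hat G}$ (hence surjective monodromy) together with Theorem~\ref{category-monodromy} and Proposition~\ref{prop-grothendieck}. Your explicit identification $H^0(X,E_\Gamma(\Gamma))\cong Z(\Gamma)$ and the tracking of its action through the bijection is a useful elaboration of what the paper leaves implicit, but it is not a different argument.
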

\begin{proof}
    Let $\pi$ be the extension of structure group map from the set of isomorphism classes of $\hat G$-bundles to the set of isomorphism classes of $\Gamma$-bundles induced by (\ref{extension-coh}) by abuse of notation. By the proof of Proposition \ref{prop-grothendieck}, $H^1(X,E_{\hat{G}}(G))$ is in bijection with the set of equivalence classes of pairs $(E'_{\hat G},f:\pi(E'_{\hat G})\xrightarrow{\sim}E_\Gamma)$, where an equivalence between pairs is defined to be an isomorphism of $\hat G$-bundles inducing the identity on $E_\Gamma$. By Proposition \ref{prop:morhisms-fixed-covering} this is in bijection with equivalence classes
    of $(\theta,c)$-twisted $\Gamma$-equivariant $G$-bundles on $Y$, where equivalence means that the induced map on $Y$ is the identity. This proves (1).
    
    Since $E_{\hat G}$ is connected, so is $E_\Gamma$. Thus the image of the monodromy of $E_\Gamma$ is surjective, and (2) follows from Theorem \ref{category-monodromy} and Proposition \ref{prop-grothendieck}.
\end{proof}

In the next section we will give a  characterization of the set of equivalence classes of  $(\theta,c)$-twisted $\Gamma$-equivariant $G$-bundles on $Y$ in terms of non-abelian cohomology from which we can also obtain Proposition \ref{grothendieck-twisted}.

\subsection{Twisted equivariant bundles as torsors}
\label{torsors}

Following \cite{damiolini} we offer a generalization of the analysis of Section \ref{section-extensions-non-abelian-cohomology}. We work in the topological category, even though our results are true in the smooth, holomorphic and algebraic categories (the last case is almost a direct consequence of \ref{section-extensions-non-abelian-cohomology}).

Let $X$ be a paracompact and Hausdorff topological space equipped with
a finite group of automorphisms $\Gamma$, and take the quotient space
$p:X\to Y:=X/\Gamma$. Note that we have notationally changed the roles
of $X$ and $Y$, because the basic objects of study are now twisted
equivariant bundles on the covering $X$.

Let $G$ be a topological group with centre $Z$ and consider a homomorphism $\theta:\Gamma\to\Aut(G)$ and a 2-cocycle $c\in Z^2_{\theta}(\Gamma,Z)$.

\begin{definition}
  Given two $(\theta,c)$-twisted $\Gamma$-equivariant $G$-bundles $E$ and $E'$ define the sheaf $\iso(E,E')$ whose local sections are local $G$-bundle isomorphisms from $E$ to $E'$. This inherits a $\Gamma$-equivariant structure, so we denote $\Gamma$-invariant sections over a $\Gamma$-invariant subset of $X$ with a superscript. We say that $E$ and $E'$ have the same \textbf{local type} if $\iso(E,E')\vert_{p^{-1}(y)}^{\Gamma}$ is nonempty for each $y\in Y$.
\end{definition}

\begin{remark}
In the situation where $\Gamma$ acts freely, local types are all the same since the fibre of a $(\theta,c)$-twisted $\Gamma$-equivariant $G$-bundle over $Y$ is isomorphic to $\hat G=G\times_{\theta,c}\Gamma$ by Proposition \ref{prop:principal-Ghat-fixed-cover}.
\end{remark}

See \cite{damiolini} for more properties of local types in the algebraic category.

Given a sheaf of groups $\g$ over $Y$ as defined in Section \ref{nac-sheaves}, we define a \textbf{$\g$-bundle} (sometimes also referred as a  \textbf{$\g$-torsor}) as a sheaf of sets $\mathcal S$ with nonempty stalks such that, for every open neighbourhood $V$ in $Y$, the group $\g(V)$ acts on $\mathcal S(V)$ transitively. This implies that $\mathcal S$ is locally isomorphism to $\g$. An \textbf{isomorphism} of $\g$-bundles is just a $\g$-equivariant morphism of sheaves. The set of isomorphism classes of $\g$-bundles is parametrized by $H^1(Y,\g)$ as defined in Section \ref{nac-sheaves}.

\begin{proposition}\label{prop-damiolini}
The set of isomorphism class of $(\theta,c)$-twisted $\Gamma$-equivariant $G$-bundles over $X$ with the same local type as a given twisted equivariant bundle $E$ is in bijection with the set of isomorphism classes of $\g_E:=(p_*\iso(E,E))^{\Gamma}$-bundles over $Y$. The map is given by $(p_*\iso(E,-))^{\Gamma}$, and the inverse is $p^*(-)\times_{p^*\g_E}E$
\end{proposition}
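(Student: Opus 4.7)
The plan is to verify that the two constructions in the statement define mutually inverse bijections between the two sets of isomorphism classes, by checking the torsor axioms locally and then gluing. First I would establish that $\g_E = (p_*\iso(E,E))^{\Gamma}$ is indeed a sheaf of groups on $Y$, using the fact that composition of $G$-bundle automorphisms is $\Gamma$-equivariant, and that for any $(\theta,c)$-twisted $\Gamma$-equivariant $G$-bundle $E'$ of the same local type as $E$, the sheaf $\SSS_{E'} := (p_*\iso(E,E'))^{\Gamma}$ is a $\g_E$-torsor on $Y$. The local type hypothesis is exactly what ensures that $\SSS_{E'}$ has nonempty stalks at every $y \in Y$, while the free and transitive action of $\g_E$ follows fibrewise from the fact that $\iso(E,E')|_{p^{-1}(y)}$ is an $\iso(E,E)|_{p^{-1}(y)}$-torsor in the ordinary (non-equivariant) sense, and passing to $\Gamma$-invariants preserves this torsor structure over the quotient since $\Gamma$ permutes the fibres over $y$ coherently.

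Next I would construct the inverse functor carefully. Given a $\g_E$-torsor $\mathcal{T}$ on $Y$, the pullback $p^*\mathcal{T}$ is naturally a $p^*\g_E$-torsor on $X$, and there is a canonical map $p^*\g_E \to \iso(E,E)$ obtained by unwinding the $\Gamma$-invariant condition (a $\Gamma$-invariant section of $p_*\iso(E,E)$ over a $\Gamma$-invariant open $V$ pulls back to a section of $\iso(E,E)$ over $p^{-1}(V)$). This makes $E$ into a left $p^*\g_E$-sheaf, so the contracted product
\begin{displaymath}
  E' := p^*\mathcal{T} \times_{p^*\g_E} E
\end{displaymath}
makes sense as a sheaf on $X$, and one verifies that it is locally isomorphic to $E$ as a twisted equivariant $G$-bundle (and so has the same local type), because locally $\mathcal{T}$ is trivial. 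Crucially, the twisted $\Gamma$-equivariant structure on $E'$ is induced from the one on $E$ via the second factor: elements of $p^*\g_E$ commute with the $\Gamma$-action up to the $\Gamma$-invariance condition built into $\g_E$, which is precisely what is needed for the quotient to inherit a well-defined $(\theta,c)$-twisted $\Gamma$-action.

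Finally I would check that the two constructions are mutually inverse. In one direction, for $E'$ of the same local type as $E$, there is a natural evaluation map
\begin{displaymath}
  p^*(p_*\iso(E,E'))^{\Gamma} \times_{p^*\g_E} E \longrightarrow E'
\end{displaymath}
sending $(\varphi, e)$ to $\varphi(e)$, which is well-defined on the contracted product and is an isomorphism of twisted equivariant $G$-bundles because it is so locally. In the other direction, for a $\g_E$-torsor $\mathcal{T}$ on $Y$, there is a natural map $\mathcal{T} \to (p_*\iso(E, p^*\mathcal{T}\times_{p^*\g_E}E))^{\Gamma}$ sending a local section $t$ of $\mathcal{T}$ to the isomorphism $e \mapsto [t,e]$, which is again a $\g_E$-equivariant isomorphism of torsors checked locally.

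The main obstacle I expect is the careful bookkeeping of how the $(\theta,c)$-twisted $\Gamma$-action on $E$ interacts with the $\g_E$-action in the contracted product construction: one must verify that the twisted equivariant structure descends correctly through the quotient by $p^*\g_E$, and that it agrees with the local type of $E$. All nontrivial content is local, so the proof reduces to the observation that over small enough $\Gamma$-invariant open sets $V \subset Y$, both $E|_{p^{-1}(V)}$ and $E'|_{p^{-1}(V)}$ are isomorphic as twisted equivariant bundles (by the local type assumption), reducing the statement to the tautological fact that $G$-torsors trivialize under $G$.
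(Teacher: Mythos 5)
Your proposal is correct and follows essentially the same route as the paper: the paper gives no independent argument but simply cites the proof of Theorem~3.2 of \cite{damiolini}, which is exactly the torsor/contracted-product descent argument you outline (local type giving nonemptiness of the stalks of $(p_*\iso(E,E'))^{\Gamma}$, simple transitivity of $\g_E$ on invariant sections, and the evaluation maps exhibiting the two constructions as mutually inverse). The only place to be slightly more careful is in justifying that ``passing to $\Gamma$-invariants preserves the torsor structure'': in general invariants of a torsor need not form a torsor, and here it is precisely the local type hypothesis (together with finiteness of $\Gamma$, so that invariants commute with the direct limits defining stalks) that supplies nonemptiness, while simple transitivity of the invariant automorphisms on invariant isomorphisms is automatic once two invariant sections exist.
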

\begin{proof}
This is identical to the proof of Theorem 3.2 in \cite{damiolini},
after replacing $E,p,\g_E$ and $\iso(E,-)$ by $\mathcal P,\pi,\mathcal
H_{\mathcal P}$ and $\mathscr{I}_{\mathcal P}(-)$, respectively.
\end{proof}

Now fix a $(\theta,c)$-twisted $\Gamma$-equivariant $G$-bundle $E$.
\begin{proposition}\label{prop-ge-iso-eghat}
When $\Gamma$ acts freely on $X$, the sheaf of groups $\g_E$ is isomorphic to the sheaf of groups $E_{\hat G}(G):=E_{\hat G}\times_{\hat G}G$ defined using the conjugacy action of $\hat G$ on $G$, where $E_{\hat G}$ is the $\hat G$-bundle obtained via Proposition  \ref{prop:principal-Ghat-fixed-cover}.
\end{proposition}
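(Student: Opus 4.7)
The plan is to identify both sheaves with the same concrete object, namely the sheaf on $Y$ whose sections over $V \subseteq Y$ are the $\hat{G}$-bundle automorphisms of $E_{\hat{G}}|_V$, equivalently the $\Gamma$-equivariant $G$-bundle automorphisms of $E|_{p^{-1}(V)}$ that cover the identity. The ingredients are (a) the standard identification of the sheaf of sections of an adjoint bundle with the sheaf of gauge transformations, (b) the fact that $c$ takes values in $Z(G)$, which makes certain twisted actions honest, and (c) the equivalence of categories from Section \ref{equivalence-of-categories}, which is available because $\Gamma$ acts freely on $X$.

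First I would record the local picture on $X$. For any principal $G$-bundle $E\to X$ the sheaf $\iso(E,E)$ is canonically the sheaf of sections of the adjoint bundle $E(G)=E\times_G G$ (with $G$ acting on itself by conjugation), via $\phi(e)=e\tilde\sigma(e)$, where $\tilde\sigma\colon E\to G$ is the $G$-antiequivariant map associated to a section by Remark \ref{rem:sections-equivariant-correspondence}. Next I would observe that, because conjugation by $Z(G)$ is trivial, the formula
\begin{equation*}
  [e,g]\cdot\gamma := [e\cdot\gamma,\theta_{\gamma^{-1}}(g)]
\end{equation*}
is well-defined and defines an \emph{honest} (non-twisted) right $\Gamma$-action on $E(G)$: the potentially obstructing factors $c(\gamma_1,\gamma_2)$ lie in $Z(G)$ and so disappear upon passing to the conjugation quotient. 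A direct computation under the section--automorphism dictionary shows that a section of $E(G)$ over $p^{-1}(V)\subseteq X$ is $\Gamma$-invariant if and only if the associated $G$-bundle automorphism of $E|_{p^{-1}(V)}$ is $(\theta,c)$-twisted $\Gamma$-equivariant in the sense of Definition \ref{equivarian-maps}. Consequently
\begin{equation*}
  \g_E(V)=(p_*\iso(E,E))^{\Gamma}(V)
\end{equation*}
is naturally the group of $(\theta,c)$-twisted $\Gamma$-equivariant $G$-bundle automorphisms of $E|_{p^{-1}(V)}$ covering the identity on $p^{-1}(V)$ (hence on $V$).

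On the other side, a section of $E_{\hat G}(G)$ over $V\subseteq Y$ is, again by the standard gauge-theoretic dictionary, a $\hat G$-bundle automorphism of $E_{\hat G}|_V$ covering the identity. At this point I would invoke Proposition \ref{prop:morhisms-fixed-covering} (equivalently, the equivalence of categories $\ff\colon\cc1\to\cc3$ of Proposition \ref{prop-equivalence-twisted-equivariant-vs-equivariant}), which in the free case we are in says precisely that $\hat G$-bundle morphisms of $E_{\hat G}|_V$ correspond bijectively to $(\theta,c)$-twisted $\Gamma$-equivariant $G$-bundle morphisms of $E|_{p^{-1}(V)}$ covering the identity on $p^{-1}(V)$. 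Specializing to automorphisms and varying $V$, this correspondence is manifestly compatible with restrictions and with composition, and therefore gives an isomorphism of sheaves of groups $\g_E\xrightarrow{\cong} E_{\hat G}(G)$.

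The most delicate step, and the main obstacle, is the second one: verifying that the twisted $\Gamma$-action on $E$ really does descend to a genuine $\Gamma$-action on $E(G)$ in a manner such that $\Gamma$-invariance matches the equivariance condition
\begin{equation*}
  \phi(e\cdot\gamma)=\phi(e)\cdot\gamma
\end{equation*}
on the corresponding automorphism $\phi$. Everything else (group structure, compatibility with restriction, and the passage between $E$ and $E_{\hat G}$) then falls out of the categorical equivalence already proved in Section \ref{equivalence-of-categories} and the naturality of the section--gauge correspondence.
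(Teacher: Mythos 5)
Your argument is correct in substance but routes the second half of the proof through automorphism groups, whereas the paper works with the associated bundles directly. Both proofs begin identically, by identifying $\iso(E,E)$ with the sheaf of sections of $E(G)=E\times_G G$ and hence $\g_E(V)$ with the $\Gamma$-invariant sections of $E(G)$ over $p^{-1}(V)$. The paper then constructs the isomorphism $\g_E\to E_{\hat G}(G)$ explicitly on representatives: a $\Gamma$-invariant section $(e,g)$ over $p^{-1}(V)$ is sent to the section of $E_{\hat G}(G)$ over $V$ obtained by evaluating at any point of the fibre, well-defined because $E$ and $E_{\hat G}$ share the same total space and
\begin{equation*}
(e\cdot\gamma,\theta_\gamma^{-1}(g))=(e(1,\gamma),\theta_\gamma^{-1}(g))=(e,g)\quad\text{in } E_{\hat G}(G),
\end{equation*}
the action of $\gamma$ on $E$ being that of $(1,\gamma)\in\hat G$; the inverse is built the same way. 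Your verification that centrality of $c$ makes the induced $\Gamma$-action on $E(G)$ honest, and that $\Gamma$-invariance of a section matches twisted equivariance of the corresponding gauge transformation, is the same computation in different packaging, and your appeal to Proposition \ref{prop:morhisms-fixed-covering} is legitimate since $\Gamma$ acts freely. What each buys: the paper's version is shorter and entirely self-contained; yours explains \emph{why} the statement holds (both sheaves are the gauge sheaf of one and the same geometric object) at the cost of importing the categorical equivalence. One imprecision you should repair: a section of $E_{\hat G}(G)=E_{\hat G}\times_{\hat G}G$ over $V$ is \emph{not} an arbitrary $\hat G$-gauge transformation of $E_{\hat G}|_V$ --- those are the sections of $E_{\hat G}\times_{\hat G}\hat G$ --- but precisely a gauge transformation $\phi(e)=e\tilde\sigma(e)$ with $\tilde\sigma$ valued in $G$, equivalently one inducing the identity on $E_{\hat G}/G\cong X$ over $V$. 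That subgroup is exactly what Proposition \ref{prop:morhisms-fixed-covering} pairs with the twisted equivariant automorphisms of $E|_{p^{-1}(V)}$ covering the identity on $p^{-1}(V)$, so your final bijection is the right one, but as written its two sides do not literally agree until you insert this restriction.
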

\begin{remark}
This shows that Proposition \ref{prop-damiolini} is a generalization of Proposition \ref{grothendieck-twisted}.
\end{remark}
\begin{proof}[Proof of Proposition \ref{prop-ge-iso-eghat}]
Throughout the proof we regard elements of $E$ or $E_{\hat G}$ indistinctly.

Let $V$ be an open neighbourhood of $Y$. Recall that the sheaf $\iso(E,E)$ is isomorphic to $E(G):=E\times_G G$, where $G$ acts on itself by conjugation. Therefore a section in $\g_E(V)$ is just a $\Gamma$-invariant section of $E(G)(U)$, where $U:=p^{-1}(V)$ and $\Gamma$ acts simultaneously on $E$ and $G$ (see Section \ref{associated-bundles}). From such a section $(e,g):U\to E(G)$ we obtain a section of $E(G)$ which is defined for each $y\in V$ as $(e,g)\vert_x$ for any $x\in p^{-1}(y)$. This is independent of the choice of $x$: given $\gamma\in \Gamma$, by $\Gamma$-invariance we have
$$(e,g)\vert_{x\gamma}=(e\cdot\gamma,\tg^{-1}(g))=(e(1,\gamma),\tg^{-1}(g))=(e,g).$$
It is also independent of the representative $(e,g)$, since $(eh,h^{-1}g)=(e,g)$ in $E_{\hat G}(G)$ for every $h\in G$.

Thus we have a map $\g_E\to E_{\hat G}(G)$. The group multiplication is obviously preserved, so we get a morphism of sheaves of groups. The inverse takes a section $s\in E_{\hat G}(G)(V)$ and produces a section of $E(G)(U)$ which, at each $x\in U$, is equal to a representative $(e,g)\in E\times G\vert_y$ of $s$ such that $e\in E\vert_x$. This is independent of the choice of representative because all of them are related by the $G$-action, and it is $\Gamma$-equivariant because
$$(e,g)\vert_{x\gamma}=(e(1,\gamma),\tg^{-1}(g))=(e\cdot\gamma,\tg^{-1}(g))\forevery \gamma\in\Gamma.$$
It is clear that this is indeed the inverse.
\end{proof}

\section{Twisted equivariant cohomology}
\label{twisted-equivariant-cohomology}

Let $X$ be a paracompact and Hausdorff connected topological space,
such as a differentiable or complex manifold. Let $G$ be a group with
centre $Z$; if we are working with topological spaces $G$ is a
topological group, whereas it is a real or complex Lie group if we are
in the smooth or holomorphic category, respectively.

Let $\Gamma$ be a discrete group of automorphisms of $X$ with centre $Z(\Gamma)$. We have an associated right action of $\Gamma$ on $X$, given by
$$X\times\Gamma\to X;\, x\mapsto \gamma^{-1}(x).$$
Consider a homomorphism
$$\theta:\Gamma\to\Aut(G);\,\gamma\mapsto\tg$$
and a 2-cocycle $c\in Z^2_{\theta}(\Gamma,Z)$. We provide a (non-abelian) cohomological interpretation for the set of isomorphism classes of $(\theta,c)$-twisted $\Gamma$-equivariant $G$-bundles over $X$. In particular, this allows us to provide cohomological conditions for the existence of $(\theta,c)$-twisted $\Gamma$-equivariant $G$-bundles over $X$. Our method imitates the usual construction of \v Cech cohomology, see for example Chapter V in \cite{grothendieck}.

Recall that we have two natural notions of isomorphisms, depending on
whether they induce the identity on $X$ or an element of $Z(\Gamma)$;
hence we explain the relation between the respective cohomology sets,
which are called twisted equivariant and reduced twisted equivariant,
respectively.

We show that twisted equivariant cohomology over $X$ coincides with usual non-abelian cohomology over $Y:=X/\Gamma$ (see Section \ref{non-abelian-cohomology}) when $\Gamma$ acts freely. Thus, twisted cohomology is in some sense a generalization of non-abelian cohomology.

\subsection{Twisted equivariant bundles and cohomology}
\label{section-definitions}

In this section we define the (reduced) first $(\theta,c)$-twisted $\Gamma$-equivariant cohomology set $H^1_{\Gamma,\theta,c}(X,\ug)$, where $\ug$ is the sheaf of (continuous, smooth, holomorphic) functions with values in $G$. This set will parametrize isomorphism classes of twisted equivariant bundles over $X$.

\textbf{Cochains.} There is a natural right action of $\Gamma$ on the set of open covers of $X$. Let $\UUU:=\{U_i\}_{i\in I}$ be a $\Gamma$-invariant countable open cover of $X$, and set
$$U_{i_1,\dots,i_k}:=U_{i_1}\cap\dots\cap U_{i_k}.$$ The action of $\Gamma$ on $\UUU$ induces a right action on the index $I$, thus an action on $I\times\dots\times I$. First define $\cu{k}$ as the set of $k$-cochains $({\zerocochain}_{i_1,\dots,i_{k+1}})_{i_1<\dots<i_{k+1}\in I}$, where ${\zerocochain}_{i_1,\dots,i_{k+1}}\in H^0(U_{i_1,\dots,i_{k+1}},\ug)$. The group $\Gamma$ acts naturally on $\cu k$ on the left via
$\Gamma\times\cu k\to\cu k;\,(\gamma,({\zerocochain}_{i_1,\dots,i_{k+1}}))\mapsto {\zerocochain}^\gamma:=({\zerocochain}_{(i_1,\dots,i_{k+1})\cdot\gamma}\circ\gamma^{-1}).$

\textbf{From twisted equivariant bundles to cochains.} Let $\su$ be the set of isomorphism classes of $(\theta,c)$-twisted $\Gamma$-equivariant $G$-bundles over $X$ such that the restrictions of the underlying bundle to the open sets $U_i$ are trivial; the isomorphisms considered here respect fibres over $X$.
Take an isomorphism class $\class{(E,\cdot)}\in\su$, together with a representative $(E,\cdot)$ consisting of a $G$-bundle $E$ and a twisted $\Gamma$-action. Take trivializations $s_i:U_i\to E$ and  define ${\onecochain}_{ij}: U_{ij}\to G$ such that $s_i{\onecochain}_{ij}=s_j$, a 1-cocycle in $H^1(\UUU,\ug)$ as defined in Section \ref{nac-sheaves}. For each $\gamma\in\Gamma$, we define $\fgg i:U_i\to G$ to be the holomorphic function that satisfies
\begin{equation}\label{eq-definition-f}
    s_i^{\gamma}\fgg i=s_i\cdot\gamma
\end{equation}
for every $i\in I$, where $\cgamma s_i:=s_{i\cdot\gamma}\circ\gamma^{-1}$. Thus we have obtained a pair $({\onecochain},{\action})\in\cu 1\times\Fun(\Gamma,\cu 0)$, where $\Fun$ denotes the group of maps between two groups that preserve the identity.

In the following we imitate the usual yoga regarding the interplay between \v Cech cohomology and fibre bundles; see Chapter V in \cite{grothendieck}, for example.

\textbf{Dependence on the choice of trivializations.} Another choice
of trivializations is given by $s_i{\zerocochain}_i$ for a suitable
${\zerocochain}:=({\zerocochain}_i)\in\cu 0$, and the obtained pair
using these is
$({\zerocochain}_i^{-1}{\onecochain}_{ij}{\zerocochain}_j,{\zerocochain}_i^{\gamma-1}\fgg
i\tg^{-1}( {\zerocochain}_{i}))$. Similarly, if
$h:(E,\cdot)\to(E',\cdot)$ is an isomorphism of $(\theta,c)$-twisted
$\Gamma$-equivariant $G$-bundles (we are denoting both actions with a
dot by abuse of notation), choose local sections $s_i$ and $s_i'$ for
$E$ and $E'$, respectively, and set $({\onecochain},{\action})$ and $({\onecochain}',{\action}')$ to be the respective pairs as in the previous paragraph. Recall that $h$ is an equivariant isomorphism of $G$-bundles $h:E\to E'$. Define an element ${\zerocochain}:=({\zerocochain}_i)_{i\in I}\in\cu 0$ such that $h(s_i)=s_i'{\zerocochain}_i$. Then we have ${\onecochain}_{ij}={\zerocochain}_i^{-1}{\onecochain}'_{ij}{\zerocochain}_j$, and the compatibility of the actions translates into
$$h(\cgamma s_i)\fgg i=
h(\cgamma s_i\fgg i)=
h(s_i\cdot\gamma)=
h(s_i)\cdot\gamma=
(s'_{i}{\zerocochain}_{i})\cdot\gamma=
(s'_{i}\cdot\gamma)\tg^{-1}(\zerocochain_i)=$$$$
s'^{\gamma}_i{\action}'_{\gamma,i}\tg^{-1}( \zerocochain_i)=
h(\cgamma s_i){\zerocochain}_i^{\gamma-1}{\action}'_{\gamma,i}\tg^{-1}(\zerocochain_i).
$$
We may identify
$({\zerocochain}_i^{-1}{\onecochain}_{ij}{\zerocochain}_j,{\zerocochain}_i^{\gamma-1}\fgg
i\tg^{-1}({\zerocochain}_{i}))$ and
$({\zerocochain}_i^{-1}{\onecochain}'_{ij}{\zerocochain}_j,{\zerocochain}_i^{\gamma
  -1}{\action}'_{\gamma,i}\tg^{-1}( {\zerocochain}_{i}))$ with
$({\onecochain},{\action})\cdot {\zerocochain}$ and
$({\onecochain}',{\action}')\cdot {\zerocochain}$, respectively, where $\cdot$ denotes the right action
\begin{align}
   (\cu 1\times\Fun(\Gamma,\cu 0))\times  \cu 0\to (\cu 1\times\Fun(\Gamma,\cu 0));\nonumber\\
    (({\onecochain},{\action}),{\zerocochain})\mapsto
    (\onecochain\cdot{\zerocochain},\action\cdot{\zerocochain}):=
    ({\zerocochain}_{i}^{-1}  {\onecochain}_{ij} {\zerocochain}_{j},{\zerocochain}_i^{\gamma-1}{\action}_{\gamma,i}\tg^{-1}({\zerocochain}_{i})).\label{eq-action-c0-c1}
\end{align}

\textbf{`Cocycle conditions' on $\varphi$.} The $(\theta,c)$-twisted conditions translate into
$$s_ig\cdot\gamma= s_i\cdot\gamma\tg^{-1}(g)\andd
 (s_i\cdot\gamma')\cdot\gamma= s_ic(\gamma',\gamma)\cdot(\gamma'\gamma)$$
for every $g\in G$, $i\in I$ and $\gamma$ and $\gamma'\in\Gamma$. In particular, we get
$$\cgamma s_i\cgamma{\onecochain}_{ij}\fgg j=\cgamma s_j\fgg j= s_j\cdot\gamma=(s_i{\onecochain}_{ij})\cdot\gamma=( s_i\cdot\gamma)\tg^{-1}({\onecochain}_{ij})=s^{\gamma}_i\fgg i\tg^{-1}( {\onecochain}_{ij}),
$$
i.e.
\begin{equation}\label{eq-compatibility-a-f}
    ({\action}_{\gamma,i})^{-1}\cgamma{\onecochain}_{ij}{\action}_{\gamma,j}=\tg^{-1}( {\onecochain}_{ij}).
\end{equation}
Moreover,
$$s^{\gamma'\gamma}_i{\action}_{\gamma'\gamma,i}\theta_{\gamma'\gamma}^{-1}(c(\gamma',\gamma))=
(s_{i}\cdot(\gamma'\gamma))\theta_{\gamma'\gamma}^{-1}(c(\gamma',\gamma))=
(s_{i}c(\gamma',\gamma))\cdot(\gamma'\gamma)=$$$$
(s_{i}\cdot\gamma')\cdot\gamma=
(s_i^{\gamma'}{\action}_{\gamma', i})\cdot\gamma=
(s_i^{\gamma'}\cdot\gamma)\tg^{-1}({\action}_{\gamma', i})=
s_i^{\gamma'\gamma}\action_{\gamma,i}^{\gamma'}\tg^{-1}( {\action}_{\gamma', i}),
$$
where the equality $s^{\gamma'}_{i}\cdot\gamma= s^{\gamma'\gamma}_i\action_{\gamma,i}^{\gamma'}$ follows from (\ref{eq-definition-f}) after substituting $i$ by $i\cdot \gamma'$ and composing both sides on the right with $\gamma^{-1}$. In other words,
\begin{equation}\label{eq-f-1-cocycle}
    {\action}_{\gamma'\gamma,i}\theta_{\gamma'\gamma}^{-1}(c(\gamma',\gamma))=\action_{\gamma,i}^{\gamma'}\tg^{-1}( {\action}_{\gamma', i}).
\end{equation}
\begin{remark}
    Let $Z^1_{\theta,c}(\Gamma,\cu 0)$ be the set of maps $a:\Gamma\to\cu 0$ satisfying
\begin{equation*}
    a_{\gamma\gamma',i}=a_{\gamma,i}\tg( a^{\gamma}_{\gamma', i})c(\gamma,\gamma')
\end{equation*}
for each $\gamma,\gamma'\in\Gamma$ and $i\in I$. This is very similar to the definition of a 1-cocycle of $\Gamma$ with values in $\cu 0$ of Galois cohomology, where the action of $\gamma\in\Gamma$ is given by $a\mapsto \tg(a^{\gamma})$, with the difference of the 2-cocycle $c$.
Equation (\ref{eq-f-1-cocycle}) implies that $(\tg(\action_{\gamma,i})^{-1})\in Z^1_{\theta,c}(\Gamma,\cu 0)$.
\end{remark}

\textbf{Cocycle conditions and complexes of cochains.} Equations (\ref{eq-compatibility-a-f}) and (\ref{eq-f-1-cocycle}), together with the fact that ${\onecochain}_{ij}$ is a 1-cocycle may be codified using a map
$$d_1:C^1(\UUU,\ug)\times\Fun(\Gamma,\cu 0)\to \cu 2\times\Fun(\Gamma,\cu 1)\times\Fun(\Gamma\times\Gamma,\cu 0);$$$$({\onecochain}_{ij},\fgg i)\mapsto({\onecochain}_{ij}{\onecochain}_{jk}{\onecochain}_{ik}^{-1},({\action}_{\gamma,i})^{-1}\cgamma{\onecochain}_{ij}{\action}_{\gamma,j}\tg^{-1}( {\onecochain}_{ij})^{-1},\action_{\gamma,i}^{\gamma'}\tg^{-1}( {\action}_{\gamma', i}){\action}_{\gamma'\gamma,i}^{-1}),$$
where we are using $\Fun$ to denote maps between groups that preserve the identity.
Let $\theta^{-1}(c)\in \Fun(\Gamma\times\Gamma,\cu 0)$ be the map
$$\Gamma\times\Gamma\ni(\gamma,\gamma')\mapsto\theta^{-1}_{\gamma'\gamma}(c(\gamma',\gamma)):X\to Z,$$
which is a constant function (we think of this as an element of $\cu 0$ by restricting to the open neighbourhoods $U_i$). Then we define the set of \textbf{$(\theta,c)$-twisted $\Gamma$-equivariant 1-cocycles} $Z^1_{\Gamma,\theta,c}(\UUU,\ug):=d_1^{-1}(1,1,\theta^{-1}(c))\subseteq\cu{1}\times\Fun(\Gamma,\cu 0)$, equivalently defined as the set of pairs $({\onecochain},{\action})\in\cu 1\times\Fun(\Gamma,\cu 0)$ such that ${\onecochain}$ is a 1-cocycle in the sense of Section \ref{non-abelian-cohomology} and ${\action}$ satisfies (\ref{eq-compatibility-a-f}) and (\ref{eq-f-1-cocycle}).

\textbf{Action of 0-cochains and cohomology.} The action of $\cu 0$ on $\cu 1\times\Fun(\Gamma,\cu 0)$ given by (\ref{eq-action-c0-c1}) preserves $\zuc$ since, for every $({\onecochain},{\action})\in\zuc$ and ${\zerocochain}\in\cu{0}$, we have
\begin{align*}
({\action}\cdot {\zerocochain})^{-1}_{\gamma,i}({\onecochain}\cdot {\zerocochain})^{\gamma}_{ij}({\action}\cdot {\zerocochain})_{\gamma,j}=
({\zerocochain}_{i}^{\gamma-1}\fgg {i}\tg^{-1}({\zerocochain}_i))^{-1}({\zerocochain}_i^{\gamma-1} {\onecochain}^{\gamma}_{ij} {\zerocochain}^{\gamma}_j){\zerocochain}_{j}^{\gamma-1}\fgg {j}\tg^{-1}({\zerocochain}_{j})=\\
\tg^{-1}({\zerocochain}_{ i})^{-1}{\action}_{\gamma,i}^{-1} {\onecochain}_{ij}^{\gamma}\fgg { j}\tg^{-1}({\zerocochain}_j)=
\tg^{-1}({\zerocochain}_{i}^{-1}{\onecochain}_{ij}{\zerocochain}_j)=
\tg^{-1}({\onecochain}\cdot {\zerocochain}_{ij})
\end{align*}
and
\begin{align*}
({\action}\cdot {\zerocochain})_{\gamma,i}^{\gamma'}\tg^{-1}(({\action}\cdot {\zerocochain})_{\gamma',i})=
{\zerocochain}_{i}^{\gamma'\gamma-1}\fgg i^{\gamma'}\tg^{-1}({\zerocochain}^{\gamma'}_{i})
\tg^{-1}({\zerocochain}_{i}^{\gamma'-1}{\action}_{\gamma',i}\theta_{\gamma'}^{-1}({\zerocochain}_i))=\\
{\zerocochain}_{i}^{\gamma'\gamma-1}\fgg i^{\gamma'}
\tg^{-1}({\action}_{\gamma',i})\theta_{\gamma'\gamma}^{-1}({\zerocochain}_i))=
({\action}\cdot {\zerocochain})_{\gamma'\gamma,i}\theta_{\gamma'\gamma}^{-1}(c(\gamma',\gamma)).
\end{align*}
We define the \textbf{first $(\theta,c)$-twisted $\Gamma$-equivariant cohomology set over $\UUU$ with values in $G$} to be $$\huc:=\zuc/\cu 0.$$
We thus get a map $\su\to\huc$. In what follows we prove that it is a bijection.

\textbf{Surjectivity.} Conversely, given an element $({\onecochain},{\action})\in\zuc$, we may define a $G$-bundle $E$ using the 1-cocycle ${\onecochain}$, together with local trivializations $s_i:U_i\to E$ such that $s_i {\onecochain}_{ij}=s_j$. Then we define the action of $\Gamma$ on $E$ by $(s_{i}g)\cdot\gamma:=s_{i}^{\gamma}{\action}_{\gamma,i}\tg^{-1}(g)$ for every $g\in G$, $i\in I$ and $\gamma\in\Gamma$. To see that this is well defined note that, by (\ref{eq-compatibility-a-f}),
$$(s_i{\onecochain}_{ij})\cdot\gamma=s^{\gamma}_{i}{\action}_{\gamma,i}\tg^{-1}({\onecochain}_{ij})=s^{\gamma}_{i}{\onecochain}^{\gamma}_{ij}{\action}_{\gamma,j}=s^{\gamma}_{j}{\action}_{\gamma,j}=s_j\cdot\gamma.$$
The $\Gamma$-action is compatible with the action of $G$ twisted by $\theta$ by definition, and by (\ref{eq-f-1-cocycle}) we have
$$(s_{i}\cdot\gamma')\cdot\gamma=(s_i^{\gamma'}{\action}_{\gamma', i})\cdot\gamma=s_i^{\gamma'\gamma}\action^{\gamma'}_{\gamma,i}\tg^{-1}({\action}_{\gamma', i}))=s_i^{\gamma'\gamma}{\action}_{\gamma'\gamma,i}\theta_{\gamma'\gamma}^{-1}(c(\gamma',\gamma))=(s_{i}c(\gamma',\gamma))\cdot(\gamma'\gamma),$$
so that the action is $(\theta,c)$-twisted. It is easy to see that the element of $\hu$ corresponding to the class of $(E,\cdot)$ is equal to the class of $({\onecochain},{\action})$, so the constructed map is surjective.

\textbf{Injectivity.} Let $(E,\cdot)$ and $(E',\cdot)$ be two $(\theta,c)$-twisted $\Gamma$-equivariant $G$-bundles. Take local sections $s_i:U_i\to E$ and $s_i':U_i\to E'$ and define $({\onecochain},{\action})$ and $({\onecochain}',{\action}')\in\zuc$ as above. Assume that $({\onecochain},{\action})=({\onecochain}',{\action}')\cdot {\zerocochain}$ for some ${\zerocochain}\in\cu 0$. Then there is an equivariant isomorphism $h:E\to E'$, defined by $h(s_i)=s_i'\zerocochain_i$. This is well defined as usual, and we need to show that it is compatible with the $\Gamma$-actions:
$$h( s_i\cdot\gamma)=h(\cgamma s_i\fgg i)=h(\cgamma s_i)\fgg i= s_i'^{\gamma}\zerocochain^{\gamma}_i\fgg i=s'^{\gamma}_i{\action}'_{\gamma,i}\tg^{-1}(\zerocochain_{i})=( s'_i\zerocochain_{ i})\cdot\gamma=h(s_i)\cdot\gamma,$$
as required.

\textbf{Refinements.} Thus we have obtained a bijection $\su\cong\huc$. Another countable $\Gamma$-invariant open cover $\VVV:=\{V_j\}_{j\in J}$ of $X$ is called a \textbf{refinement} of $\UUU$ if there is a map $t:J\to I$ such that $V_{j}\subseteq U_{t(j)}$ for every $j\in J$. There are  "restriction maps" $\cu 0\to C^0(\VVV,\ug)$ and $\cu 1\times\Fun(\Gamma,\cu 0)\to C^1(\VVV,\ug)\times\Fun(\Gamma,C^0(\VVV,\ug))$ depending on $t$. Moreover, there is an induced map in cohomology $\huc\to H_{\Gamma,\theta,c}^{1}(\VVV,\ug)$. It can be seen (for example, from geometric arguments using the bijections) that this map is independent of $t$ and so the set $\{\huc\}_{\UUU}$ is inductive.

\begin{definition}\label{def-twisted-equivariant-cohomology}
  The \textbf{first $(\theta,c)$-twisted $\Gamma$-equivariant cohomology set over $X$ with values in $G$}, denoted $\hxc$, is the inductive limit of the sets $\huc$ over the system of $\Gamma$-invariant open covers $\UUU$ on $X$.
\end{definition}

The map $\huc\to H_{\Gamma,\theta,c}^{1}(\VVV,\ug)$ induces the natural inclusion of $\su$ in $ \sv$. Using that $\sx$ is the inductive limit of $\{\su\}_{\UUU}$ given by these inclusions gives the following.

\begin{proposition}\label{prop-bijection-hx-bundles}
  Consider the notion of isomorphism of twisted equivariant bundles
  descending to the identity on $X$. Let $\UUU$ be a countable
  $\Gamma$-invariant open cover of $X$. Then there is a natural
  bijection between the set of isomorphism classes of
  $(\theta,c)$-twisted $\Gamma$-equivariant principal $G$-bundles over
  $X$ which are trivial on the open sets of $U$ and $\huc$. This
  induces a bijection between the set of isomorphism classes of
  $(\theta,c)$-twisted $\Gamma$-equivariant principal $G$-bundles over
  $X$ and $\hxc$.
\end{proposition}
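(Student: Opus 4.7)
The first claim, that there is a natural bijection between isomorphism classes of $(\theta,c)$-twisted $\Gamma$-equivariant principal $G$-bundles trivial over the elements of $\UUU$ and $\huc$, is essentially contained in the preceding discussion. My plan is simply to package that discussion: the construction $(E,\cdot)\mapsto({\onecochain},{\action})$ associated with a choice of trivializations $(s_i)$ lands in $\zuc$ by the computations verifying \eqref{eq-compatibility-a-f} and \eqref{eq-f-1-cocycle}; changing trivializations by an element ${\zerocochain}\in\cu 0$ transforms $({\onecochain},{\action})$ by the action \eqref{eq-action-c0-c1}; and any isomorphism $h\colon(E,\cdot)\to(E',\cdot)$ descending to the identity on $X$ produces an element of $\cu 0$ relating the two cocycles. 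So the map $\su\to\huc$ is well defined, and the ``surjectivity'' and ``injectivity'' paragraphs above show it is a bijection.

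For the second claim, the plan is to pass to the inductive limit over $\Gamma$-invariant countable open covers. First I would check that if $\VVV$ is a refinement of $\UUU$ via $t\colon J\to I$, then the restriction map $Z^1_{\Gamma,\theta,c}(\UUU,\ug)\to Z^1_{\Gamma,\theta,c}(\VVV,\ug)$ defined by $({\onecochain},{\action})\mapsto({\onecochain}_{t(j)t(k)}|_{V_{jk}},{\action}_{\gamma,t(j)}|_{V_j})$ is compatible with the action of $0$-cochains, so it descends to $\huc\to H^1_{\Gamma,\theta,c}(\VVV,\ug)$. A standard argument (independence of $t$ up to the $0$-cochain action) shows that the induced map in cohomology is canonical, so the system $\{\huc\}_{\UUU}$ is inductive and its limit is $\hxc$ by Definition~\ref{def-twisted-equivariant-cohomology}. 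Parallel to this, the inclusion $\su\hookrightarrow\sv$ obtained by viewing a bundle trivial over $\UUU$ as one trivial over $\VVV$ is compatible with the bijection $\su\cong\huc$, and the first-claim bijection passes to the inductive limit on both sides.

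Next I would verify that $\sx$ (the set of isomorphism classes of all $(\theta,c)$-twisted $\Gamma$-equivariant $G$-bundles over $X$, modulo isomorphisms descending to $\Id_X$) is genuinely the inductive limit of the $\su$. The nontrivial point is that every such bundle $E\to X$ admits a $\Gamma$-invariant countable trivializing cover: any open cover trivializing $E$ can be replaced by its $\Gamma$-orbits, each of which is still trivializing since the twisted action maps fibres to fibres and $\Gamma$ acts through bundle automorphisms; paracompactness of $X$ then lets us extract a countable refinement, which we may again symmetrize under the finite or countable group $\Gamma$. This establishes the bijection $\sx\cong\hxc$.

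The main obstacle I anticipate is keeping track of the inductive-system compatibility in the presence of the twist: one must be careful that the refinement maps respect both the $\onecochain$ piece (where $c$ does not appear) and the $\action$ piece (where $c$ and the $\Gamma$-action on the index set intervene). This amounts to the observation that if $\VVV$ is $\Gamma$-invariant and $t\colon J\to I$ is any refinement map, one can replace $t$ by a $\Gamma$-equivariant refinement (using the freeness of the action on orbits of indices, or just by averaging over $\Gamma$-orbits) without changing the class in $H^1_{\Gamma,\theta,c}(\VVV,\ug)$; this is the content of the ``geometric argument'' alluded to in the paragraph on refinements, and it is the only place where the twist plays a genuine role in the limiting procedure.
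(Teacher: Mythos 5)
Your proposal is correct and follows essentially the same route as the paper: the first bijection $\su\cong\huc$ is exactly the content of the preceding discussion (construction of $(\onecochain,\action)$, independence of trivializations via the $\cu 0$-action, surjectivity and injectivity), and the global statement is obtained by passing to the inductive limit over $\Gamma$-invariant covers, which is all the paper does. Your extra remarks on symmetrizing a trivializing cover under $\Gamma$ and on choosing $\Gamma$-compatible refinement maps make explicit points the paper leaves implicit ("it can be seen ... from geometric arguments"), but they do not change the argument.
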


\subsection{Isomorphisms descending to $Z(\Gamma)$ and reduced twisted equivariant cohomology}\label{reduced-twisted-cohomology}

Recall from Section \ref{equivalence-of-categories} that we also have
a notion of isomorphism of twisted equivariant bundles on $X$
descending to an element of $Z(\Gamma)$ (rather than the identity). We
give a cohomological interpretation of these objects and relate it to
Definition \ref{def-twisted-equivariant-cohomology}.

First we define a $Z(\Gamma)$-action on $\huc$ as follows: the natural
left action of $\Gamma$ on $\cu 0$ that we have defined above
determines a semidirect product $\cu 0\rtimes Z(\Gamma)$. We extend
the $\cu 0$-action on $\zuc$ to a right
$\cu 0\rtimes Z(\Gamma)$-action:
\begin{align*}
    (\cu 1\times\Fun(\Gamma,\cu 0))\times  (\cu 0\rtimes Z(\Gamma))\to\\ (\cu 1\times\Fun(\Gamma,\cu 0));\\
    (({\onecochain},{\action}),({\zerocochain},\lambda))\mapsto
    (\onecochain\cdot({\zerocochain},\lambda),\action\cdot({\zerocochain},\lambda)):=
    ({\zerocochain}_{i}^{\lambda-1}\clambda  {\onecochain}_{ij} {\zerocochain}^{\lambda}_{j},{\zerocochain}_i^{\lambda\gamma-1}{\action}^{\lambda}_{\gamma,i}\tg^{-1}({\zerocochain}^{\lambda}_{i})).
\end{align*}
The inclusion of $Z(\Gamma)$ in $\cu 0\rtimes Z(\Gamma)$ as a set then induces a genuine $Z(\Gamma)$-group action on $\huc$, which we also denote with a dot.

\begin{definition}\label{def-twisted-equivariant-reduced-cohomology}
We define the \textbf{first $(\theta,c)$-twisted $\Gamma$-equivariant reduced cohomology set over $\UUU$ with values in $G$} $$\thuc:=\zuc/\cu 0\rtimes Z(\Gamma)=\huc/Z(\Gamma).$$ The inductive structure on $\{\huc\}_{\UUU}$ also makes $\{\thuc\}_{\UUU}$ inductive, and we define the \textbf{first $(\theta,c)$-twisted $\Gamma$-equivariant reduced cohomology set over $X$ with values in $G$}, denoted $\thxc$, to be the corresponding inductive limit.
\end{definition}

Let $\tsu$ be the set of isomorphism classes of $(\theta,c)$-twisted $\Gamma$-equivariant $G$-bundles which are trivial over $\UUU$, where now isomorphisms induce the action of any element of $Z(\Gamma)$ over $X$. Note that $\tsu=\su/Z(\Gamma)$, the quotient of $\su$ by the pullback action of $Z(\Gamma)$. Given a class $\class{(E,\cdot)}\in\su$, let $\class{(\onecochain,\action)}$ be the corresponding class in $\huc$. It is then clear that, for each $\gamma\in Z(\Gamma)$, the class of $\huc$ corresponding to $\gamma^*\class{(E,\cdot)}$ is
$$\class{(\onecochain^{\gamma^{-1}},\action^{\gamma^{-1}})}=\class{(\onecochain,\action)}\cdot\gamma^{-1}.$$
This shows that the natural bijection in Proposition \ref{prop-bijection-hx-bundles} induces a one-to-one correspondence $\tsu\cong\thuc$. As $\UUU$ varies,
this induces a bijection
$$
\tsx\cong \thxc.
$$
Summing up, we have the following.

\begin{proposition}\label{prop-bijection-hx-bundles-reduced}
  Let $\UUU$ be a countable $\Gamma$-invariant open cover of $X$. Then
  there is a bijection between the set of isomorphism classes of
  $(\theta,c)$-twisted $\Gamma$-equivariant principal $G$-bundles over
  $X$ which are trivial on the open sets of $\UUU$ and $\thuc$. This
  induces a bijection between the set of isomorphism classes of
  $(\theta,c)$-twisted $\Gamma$-equivariant principal $G$-bundles over
  $X$ and $\thxc$. Here we are considering all the isomorphisms
  inducing the action of elements of $Z(\Gamma)$ on $X$.
\end{proposition}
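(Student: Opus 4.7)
The plan is to deduce this from Proposition \ref{prop-bijection-hx-bundles} by passing to $Z(\Gamma)$-quotients on both sides, exactly as in the passage from $\huc$ to $\thuc$ by definition. The point is that the bijection $\su \cong \huc$ established earlier is automatically $Z(\Gamma)$-equivariant, once the pullback action of $Z(\Gamma)$ on $\su$ and the quotient-definition action of $Z(\Gamma)$ on $\huc$ are compared explicitly.

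First I would check that the pullback action of $Z(\Gamma)$ on $\su$ is well defined. For $\gamma \in \Gamma$, the remark after Definition \ref{def:morphism-twq} (in Section \ref{sec:twisted-bundles}) shows that the pullback $\eta_\gamma^*E$ carries a twisted $\Gamma$-equivariant structure only after conjugating the $\Gamma$-action on $X$ by $\gamma$; this conjugation is trivial precisely when $\gamma \in Z(\Gamma)$, which is why only central elements can act on $\su$. I would also observe that a twisted $(G,\Gamma)$-equivariant isomorphism $(E,\cdot) \to (E',\cdot)$ covering the action of $\gamma \in Z(\Gamma)$ on $X$ is the same datum as an isomorphism $(E,\cdot) \to \eta_\gamma^*(E',\cdot)$ covering the identity, so that $\tsu$ is precisely the orbit set $\su / Z(\Gamma)$.

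Second, I would compute the effect of pullback by $\gamma \in Z(\Gamma)$ on cocycle data. Starting from trivializations $s_i : U_i \to E$ of a representative $(E,\cdot)$ with associated pair $(\onecochain,\action) \in \zuc$, the $\Gamma$-invariance of $\UUU$ lets me reindex and use $s_{i\cdot\gamma} \circ \gamma$ as trivializations of $\eta_\gamma^* E$ over $U_i$. Reading off the new transition functions and twisting functions gives $(\onecochain^{\gamma^{-1}},\action^{\gamma^{-1}})$, which matches the right action $(\onecochain,\action)\cdot\gamma^{-1}$ on $\zuc$ defined in Section \ref{reduced-twisted-cohomology} (the case $\zerocochain = 1$). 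This identifies the $Z(\Gamma)$-action on $\su$ with the $Z(\Gamma)$-action on $\huc$ under the bijection of Proposition \ref{prop-bijection-hx-bundles}.

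Third, combining these two observations yields $\tsu = \su/Z(\Gamma) \cong \huc/Z(\Gamma) = \thuc$. Finally, I would pass to the inductive limit over countable $\Gamma$-invariant open covers $\UUU$: the refinement maps on cocycles commute with the $Z(\Gamma)$-actions on both sides (by functoriality of pullback and of restriction to a refinement), so the $Z(\Gamma)$-equivariant bijections assemble into a bijection $\tsx \cong \thxc$. The main obstacle I expect is the bookkeeping in the second step, namely carefully tracking the index shift $i \mapsto i\cdot\gamma^{-1}$ and the appearance of $\tg^{-1}$ via the twisted equivariance condition \eqref{eq-compatibility-a-f}, to confirm that pullback really produces $(\onecochain,\action)\cdot\gamma^{-1}$ and not some twist of this formula.
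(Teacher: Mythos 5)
Your proposal is correct and follows essentially the same route as the paper: the paper also defines $\tsu=\su/Z(\Gamma)$ via the pullback action, observes that the bijection of Proposition \ref{prop-bijection-hx-bundles} sends $\gamma^*\class{(E,\cdot)}$ to $\class{(\onecochain^{\gamma^{-1}},\action^{\gamma^{-1}})}=\class{(\onecochain,\action)}\cdot\gamma^{-1}$, and passes to the inductive limit. The only difference is that the paper states the equivariance of the bijection as "clear," whereas you spell out the verification (including why only central elements act on $\su$), which is a harmless elaboration.
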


\subsection{Long exact sequences in twisted equivariant cohomology.}
\label{twisted-long-exact-sequences}

In this section we use long exact sequences in twisted equivariant cohomology to test the existence of $(\theta,c)$-twisted $\Gamma$-equivariant $G$-bundles over $X$. In the process we give an alternative interpretation of these objects as lifts of certain twisted $\Gamma$-equivariant $G/Z$-bundles over $X$.

We shall henceforth drop $c$ from the notation whenever it is trivial.

Let $\hhx$ be the group of $\Gamma$-equivariant holomomorphic functions $X\to G$, i.e. functions ${\zerocochain}:X\to G$ such that ${\zerocochain}(\gamma(x))=\tg({\zerocochain}(x))$ for each $x\in X$. 

Let $G'$ be a second (topological, real Lie, complex Lie) group equipped with homomorphisms $\theta':\Gamma\to\Aut(G')$ and $\tau:G\to G'$ satisfying $\tau\tg=\theta'_{\gamma}\tau$ for every $\gamma\in\Gamma$. We then have induced maps $\hhx\to H_{\Gamma,\theta'}^{0}(X,\ug')$, $\hx\to H_{\Gamma,\theta'}^{1}(X,\ug')$, $\hx\to \widetilde H_{\Gamma,\theta'}^{1}(X,\ug')$ and $\thx\to \widetilde H_{\Gamma,\theta'}^{1}(X,\ug')$ sending the class of $({\onecochain},{\action})\in\zu$ to the class of $(\tau(g),\tau({\action}))$. These are morphisms of pointed sets, where the distinguished elements are the classes of pairs $(1,1)$; these correspond, under the identification of Proposition \ref{prop-bijection-hx-bundles}, to the isomorphism classes of the respective trivial bundles with the natural $\theta$-twisted $\Gamma$-actions.

In particular we may consider the extension
$$1\to Z\to G\to G/Z\to 1;$$
note that $\theta$ preserves $Z$, thus inducing an automorphism of $G/Z$ which we also call $\theta$.
This induces an exact sequence
$$1\to \hhxz\to\hhx\to\hhxgz;$$
note that these are groups and $\hhxz$ is a normal subgroup of $\hhx$ (actually it is its centre), so that exactness in the middle term is understood in the sense that the last homomorphism induces a group embedding $\hhx/\hhxz\hookrightarrow\hhxgz$.

In the following we emulate the usual  construction of long exact sequences in \v Cech cohomology, see for example \cite{grothendieck}.

\textbf{The coboundary map from sections to first cohomology.} To study the image of the last homomorphism we define a right action of $\hhxgz$ on $\hxz$ as follows: let $\overline {\zerocochain}$ be a $\Gamma$-equivariant holomorphic map $\overline {\zerocochain}:X\to G/Z$. Given a class in $\hxz$, we may find a countable invariant open cover $\UUU=\{U_i\}_{i\in I}$ such that the class has a representative $({\onecochain},{\action})\in\zuz$ and the restriction of $\overline a$ to $U_i$ has a lift to a holomorphic function ${\zerocochain}_i:U_i\to G$ for every $i\in I$. If ${\zerocochain}:=({\zerocochain}_i)_{i\in I}\in\cu 0$ then $\overline {\zerocochain}\in\hhxgz$ sends $({\onecochain},{\action})$ to
$({\onecochain},{\action})\cdot{\zerocochain}$, where we regard $({\onecochain},{\action})$ as an element of $\zu$ and `$\cdot$' denotes the action defined in \ref{eq-action-c0-c1}.

Note that this is an element of $\cuz 1\times\Fun(\Gamma,\cuz 0)$, since ${\zerocochain}_j\vert_{U_{ij}}={\zerocochain}_iz_{ij}\vert_{U_{ij}}$ for some $z_{ij}\in\cuz 1$ and the $\Gamma$-equivariance of $\overline {\zerocochain}$ implies that $\tg^{-1}(\cgamma {\zerocochain}_i)={\zerocochain}_iz_i$ for some $z_i\in\cuz 0$. Moreover the action of $\cu0$ preserves $\zu$, so that $({\onecochain},{\action})\cdot{\zerocochain}\in\zuz$. The class of $({\onecochain},{\action})\cdot{\zerocochain}$ in $\huz$ is independent of the choice of ${\zerocochain}$: another set of lifts is given by ${\zerocochain}z$ for a suitable $z=(z_i)_{i\in I}\in\cuz 0$, so that
$$({\onecochain},{\action})\cdot{\zerocochain}z=(({\onecochain},{\action})\cdot{\zerocochain})\cdot z.$$
Similarly, the class of $(({\onecochain},{\action})\cdot z)\cdot{\zerocochain}=(({\onecochain},{\action})\cdot{\zerocochain})\cdot z$ is equal to the class of $({\onecochain},{\action})\cdot{\zerocochain}$ for every $z\in \cuz 0$.

Hence we get a right action of $\hhxgz$ on $\hxz$ which we also call $\cdot$. Considering the action on the trivial element of $\hxz$ we get a \textbf{coboundary map} $\delta:\hhxgz\to\hxz$.

\textbf{Second cohomology.} Since $Z$ is abelian, we may further define a second cohomology group with values in $Z$: first we define a group homomorphism
\begin{align*}
d_2:\cuz 2\times\Fun(\Gamma,\cuz 1)\times\Fun(\Gamma\times\Gamma,\cuz 0)\to \\
\cuz 3\times\Fun(\Gamma,\cuz 2)\times\Fun(\Gamma\times\Gamma,\cuz 1)\times \Fun(\Gamma\times\Gamma\times\Gamma,\cuz 0);\\
(u,v,w)\mapsto (u_{ijk}u_{ikl}u_{ijl}^{-1}u_{jkl}^{-1},u^{\gamma-1}_{ijk}\tg^{-1}(u_{ijk})v_{ij}v_{jk}v_{ik}^{-1},\\
v^{\gamma'}_{ij}\tg^{-1}( v_{\gamma',ij})v^{-1}_{\gamma'\gamma,ij}w_{\gamma,i}w_{\gamma,j}^{-1},\tg^{-1}( w_{\gamma',\gamma'',i})w_{\gamma,\gamma''\gamma',i}(w_{\gamma'\gamma,\gamma'',i}w^{\gamma''}_{\gamma,\gamma',i})^{-1}).
\end{align*}
We set $Z^2_{\Gamma,\theta}(\UUU,\uz):=\ker d_2$.

Note that the condition that the first component is equal to 1 is equivalent to $u$ being a 2-cocycle, and the condition that the last component is equal to 1 is equivalent to $(\theta_{\gamma\gamma'}(w_{\gamma',\gamma,i}))$ being an element of $Z^2_{\theta}(\Gamma,\cuz 0)$, where $\Gamma$ acts via $\theta$ and its natural action on $I$. There is an action of $\cu{1}\times\Fun(\Gamma,\cu 0)$ on $\cuz 2\times\Fun(\Gamma,\cuz 1)\times\Fun(\Gamma\times\Gamma,\cuz 0)$, such that the result of the action of $({\onecochain},{\action})$ on $(u,v,w)$ is equal to $d_1({\onecochain},{\action})(u,v,w)$.
It can be seen that $d_2d_1$ is the trivial map, hence this action preserves $Z^2_{\Gamma,\theta}(\UUU,\uz)$.

\begin{definition}\label{def-second-twisted-equivariant-cohomology}
  We set $H^2_{\Gamma,\theta}(\UUU,\uz):=\ker d_2/\cu{1}\times\Fun(\Gamma,\cu 0)$, the \textbf{second $(\theta,c)$-twisted $\Gamma$-equivariant cohomology set with values in $Z$}. We define $H^2_{\Gamma,\theta}(X,\uz)$ to be equal to the corresponding inductive limit.
\end{definition}

\textbf{Coboundary map from first to second cohomology.} We now define a coboundary map $\delta:\hxgz\to H^2_{\Gamma,\theta}(X,\uz)$ as follows: for each class in $\hxgz$ take a representative $(\overline {\onecochain},\overline {\action})\in\zugz$, lift it to an element $({\onecochain},{\action})$ of $\cu{1}\times\Fun(\Gamma,\cu 0)$ and calculate $d_1({\onecochain},{\action})$ (we choose the countable $\Gamma$-invariant open cover $\UUU$ so that this is possible). This is an element of $Z^2_{\Gamma,\theta}(\UUU,\uz)$ (taking the quotient by $Z$ commutes with $d_1$ and $d_2d_1=1$). Moreover, its class only depends on the class of $(\overline {\onecochain},\overline {\action})$.

Indeed, let $(\overline {\onecochain}',\overline {\action}')$ be another representative with a lift $({\onecochain}',{\action}')$ in the group $\cu 0\times\Fun(\Gamma,\cu 0)$ and consider an element $\overline{\zerocochain}\in\cugz 0$ with a lift ${\zerocochain}\in\cu 0$ such that $(\overline {\onecochain}',\overline {\action}')=(\overline {\onecochain},\overline {\action})\cdot\overline {\zerocochain}$. Then there exists $({\onecochain}_0,{\action}_0)\in\cuz 0\times\Fun(\Gamma,\cuz 0)$ such that $({\onecochain}',{\action}')=(({\onecochain},{\action})\cdot {\zerocochain})({\onecochain}_0,\action_0)$, which implies that $d_1({\onecochain}',{\action}')=d_1(({\onecochain},{\action})\cdot {\zerocochain})d_1({\onecochain}_0,{\action}_0)$. But the components of $d_1(({\onecochain},{\action})\cdot {\zerocochain})$ are
$$(d_1(({\onecochain},{\action})\cdot {\zerocochain}))_1={\zerocochain}_{i}^{-1}{\onecochain}_{ij}{\zerocochain}_{j}{\zerocochain}_{j}^{-1}{\onecochain}_{jk}{\zerocochain}_{k}{\zerocochain}_{k}^{-1}{\onecochain}_{ik}^{-1}{\zerocochain}_{i}={\zerocochain}_i^{-1}{\onecochain}_{ij}{\onecochain}_{jk}{\onecochain}_{ik}^{-1}{\zerocochain}_i={\onecochain}_{ij}{\onecochain}_{jk}{\onecochain}_{ik}^{-1},$$
\begin{align*}
    (d_1(({\onecochain},{\action})\cdot {\zerocochain}))_2=({\zerocochain}_{i}^{\gamma-1}\fgg {i}\tg^{-1}( {\zerocochain}_{ i}))^{-1}{\zerocochain}_{i}^{\gamma-1}{\onecochain}^{\gamma}_{ij}{\zerocochain}^{\gamma}_{j}{\zerocochain}_{j}^{\gamma-1}\fgg {j}\tg^{-1}({\zerocochain}_{ j})\tg^{-1}({\zerocochain}_{i}^{-1} {\onecochain}_{ij} {\zerocochain}_{ j})^{-1}=\\
    \tg^{-1}({\zerocochain}_i)^{-1}{\action}^{\gamma-1}_{\gamma,i}{\onecochain}^{\gamma}_{ij}\fgg {j}\tg^{-1}({\onecochain}_{ij})^{-1}\tg^{-1}( {\zerocochain}_{i})={\action}^{\gamma-1}_{i}{\onecochain}_{ij}\fgg {j}\tg^{-1}({\onecochain}_{ij}^{\gamma})^{-1}
\end{align*}
and
\begin{align*}
(d_1(({\onecochain},{\action})\cdot {\zerocochain}))_3= {\zerocochain}^{\gamma'\gamma-1}_{i}\fgg {i}^{\gamma'}\tg^{-1}({\zerocochain}_i^{\gamma'})\tg^{-1}({\zerocochain}_{i}^{\gamma'-1}{\action}_{\gamma',i}\theta_{\gamma'}^{-1}({\zerocochain}_{ i}))({\zerocochain}_{i}^{\gamma'\gamma-1}{\action}_{\gamma'\gamma,i}\theta_{\gamma'\gamma}^{-1}({\zerocochain}_{i}))^{-1}=\\
{\zerocochain}_{i}^{\gamma'\gamma-1}\fgg i^{\gamma'}\tg^{-1}({\action}_{\gamma', i}){\action}_{\gamma'\gamma,i}^{-1}{\zerocochain}_{i}^{\gamma'\gamma}=
\fgg i^{\gamma'}\tg^{-1}({\action}_{\gamma', i}){\action}_{\gamma'\gamma,i}^{-1},
\end{align*}
where the last equalities follow from $d_1(\zerocochain,\action)\in Z^2_{\Gamma,\theta}(\UUU,\uz)$.
Thus $d_1(({\onecochain},{\action})\cdot {\zerocochain})=d_1({\onecochain},{\action})$ and so $d_1({\onecochain},{\action})$ and $d_1({\onecochain}',{\action}')$ are in the same class of $H^2_{\Gamma,\theta}(X,\uz)$.

\begin{proposition}\label{prop-long-exact-seq-Z}
The sequence
\begin{align}\label{eq-long-exact-seq-G-Z}
    1\to \hhxz\to\hhx\to\hhxgz\xrightarrow{\delta}\\
    \notag\xrightarrow{\delta}\hxz\to\hx\to\hxgz\xrightarrow{\delta} H^2_{\Gamma,\theta}(X,\uz)
\end{align}
is exact. Moreover, given a 2-cocycle $c\in Z^2_{\theta}(\Gamma,Z)$, the image of the map $\hxc\to\hxgz$ induced by the quotient is equal to the preimage of the class of $(1,1,\theta^{-1}(c))\in Z^2_{\Gamma,\theta}(X,\uz)$ in $H^2_{\Gamma,\theta}(X,\uz)$ under the coboundary map $\delta$. Here $\theta^{-1}(c)(\gamma,\gamma'):=\theta^{-1}_{\gamma'\gamma}(c(\gamma',\gamma))$ is regarded as a constant function from $X$ to $Z$ for each $\gamma$ and $\gamma'\in\Gamma$.
\end{proposition}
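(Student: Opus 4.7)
The plan is to verify exactness term-by-term by unwinding the definitions of the two coboundary maps and performing the standard non-abelian chase, adapted to the twisted equivariant setting. Throughout, we fix a countable $\Gamma$-invariant open cover $\UUU$ fine enough to realise every relevant class, and pass to the inductive limit at the end; we also use freely that the action of $\cu 0$ on $\zu$ defined in \eqref{eq-action-c0-c1} preserves $\zuz$ whenever the acting element comes from $\cuz 0$. The first three maps are induced by the sheaf homomorphisms $Z \hookrightarrow G$ and $G \twoheadrightarrow G/Z$, so exactness of
\[
1\to \hhxz\to\hhx\to\hhxgz
\]
is immediate, and $\hhxgz/\mathrm{image} = \ker(\delta)$ since a global section $\bar{\zerocochain}$ of $\ugz$ lifts to a section of $\ug$ exactly when the locally defined lifts $\zerocochain_i$ can be glued, which by definition of $\delta$ is the statement that the class $[(\zerocochain_i^{-1}\zerocochain_j,\,\zerocochain_i^{\gamma-1}\tg^{-1}(\zerocochain_i))]\in\hxz$ is trivial.

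For exactness at $\hxz$, suppose $(\onecochain,\action)\in\zuz$ becomes trivial in $\hx$, so that there exists $\zerocochain\in\cu 0$ with $(\onecochain,\action)=(1,1)\cdot\zerocochain$. Then $\onecochain_{ij}=\zerocochain_i^{-1}\zerocochain_j\in Z$ and $\action_{\gamma,i}=\zerocochain_i^{\gamma-1}\tg^{-1}(\zerocochain_i)\in Z$, which respectively say that the images $\bar{\zerocochain}_i\in\cugz 0$ agree on overlaps (giving a global section $\bar{\zerocochain}:X\to G/Z$) and that $\bar{\zerocochain}$ is $\Gamma$-equivariant; then $\delta(\bar{\zerocochain})=[(\onecochain,\action)]$ by construction of $\delta$. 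The reverse inclusion is built into the definition of $\delta$. Exactness at $\hx$ is similar: $[(\onecochain,\action)]$ is trivial in $\hxgz$ iff there is $\bar{\zerocochain}\in\cugz 0$ with $(\bar{\onecochain},\bar{\action})\cdot\bar{\zerocochain}=(1,1)$; after refining $\UUU$ we may lift $\bar{\zerocochain}$ to some $\zerocochain\in\cu 0$, whereupon $(\onecochain,\action)\cdot\zerocochain\in\zuz$ furnishes the desired preimage in $\hxz$. Finally, exactness at $\hxgz$ is the content of the computation preceding the statement: given $(\bar{\onecochain},\bar{\action})\in\zugz$, any lift $(\onecochain,\action)\in\cu 1\times\Fun(\Gamma,\cu 0)$ satisfies $d_1(\onecochain,\action)\in Z^2_{\Gamma,\theta}(\UUU,\uz)$ (because $d_2d_1=1$ after reduction mod $Z$), and the class of $d_1(\onecochain,\action)$ is independent of the choice of lift by the displayed computations showing $d_1((\onecochain,\action)\cdot\zerocochain)=d_1(\onecochain,\action)$; triviality of this class means precisely that one can correct $(\onecochain,\action)$ by an element of $\cu 1\times\Fun(\Gamma,\cu 0)$ to land in $\zu$, proving $\delta(\bar{\onecochain},\bar{\action})=1$ iff $(\bar{\onecochain},\bar{\action})$ lifts.

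The supplementary statement about the cocycle $c$ is then immediate from the same computation: by definition $(\onecochain,\action)\in\zuc$ iff $d_1(\onecochain,\action)=(1,1,\theta^{-1}(c))$, so the image of $\hxc\to\hxgz$ consists of exactly those classes whose coboundary equals the class of $(1,1,\theta^{-1}(c))$ in $H^2_{\Gamma,\theta}(X,\uz)$. The main obstacle I anticipate is purely bookkeeping: carefully tracking the non-abelian right action of $\cu 0$, the twist of the $\Gamma$-action on cochains by $\theta$, and the coefficient shifts $\tg^{-1}(\cdot)$ appearing in the formulas for $d_1$ and $\delta$, so that the relations $\delta\circ(\text{previous})=1$ and the converses are checked in the correct non-abelian order. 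Once these identifications are made, every step reduces to one of the displayed identities already established in the preceding subsections.
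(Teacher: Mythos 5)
Your proof follows essentially the same route as the paper's: a term-by-term chase using the explicit formulas for the two coboundary maps, the action \eqref{eq-action-c0-c1} of $\cu 0$ on cochain pairs, and the identification $\zuc=d_1^{-1}(1,1,\theta^{-1}(c))$, with each stage matching the corresponding step in the paper. The one point to tighten is the last step: the element of $\cu 1\times\Fun(\Gamma,\cu 0)$ used to correct a lift $({\onecochain},{\action})$ so that $d_1$ equals $(1,1,\theta^{-1}(c))$ exactly must be taken with values in $Z$ (as the paper does), since otherwise the corrected pair need not reduce to the same class in $\hxgz$; note also that the paper additionally records the stronger, orbit form of exactness at those terms where the preceding set is a group, whereas you verify the pointed-set version.
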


\begin{remark}
Proposition \ref{prop-long-exact-seq-Z} should be understood as follows: call $A_k$ to the $k$-th term in the sequence ($A_0=1$, etc.). Then the preimage of the distinguished element of $A_{k+2}$ is equal to the image of $A_k$. A stronger statement is proven whenever $A_k$ is a group since in this case, by the discussion above, there is an action of $A_k$ on $A_{k+1}$: two elements in $A_{k+1}$ have the same image if and only if they are in the same orbit. Here the action of $\hhx$ on $\hhxgz$ is induced by multiplication on the left.
\end{remark}

The long exact sequence (\ref{eq-long-exact-seq-G-Z}) is a twisted equivariant version of (\ref{eq-long-exact-seq-lifting-ghat}) when $\Gamma$ acts freely. In this case $p:X\to Y:=X/\Gamma$ is an \'etale cover and we know by Proposition \ref{prop:principal-Ghat-fixed-cover} that $\hat G$-bundles over $Y$ are in correspondence with $(\theta,c)$-twisted $\Gamma$-equivariant $G$-bundles over $X$, whose isomorphism classes are parametrized by twisted equivariant cohomology by Propositions \ref{prop-bijection-hx-bundles} and \ref{prop-bijection-hx-bundles-reduced}; the group $\hat G:=G\times_{\theta,c}\Gamma$ is defined in Section \ref{sec:extensions-lie}. Thus, in particular, (\ref{eq-long-exact-seq-G-Z}) answers the question of existence of $\hat G$-bundles over $Y$ with associated $\Gamma$-bundle equal to $X$ by considering them as lifts of $\theta$-twisted $\Gamma$-equivariant $G/Z$-bundles over $X$, which are in correspondence with $\check{G}:=\hat G/Z$-bundles over $Y$. We state this more precisely.

\begin{corollary}\label{cor-comparison-twisted-equivariant-long-exact-seq}
  The set of $\hat G$-bundles over $Y$ with associated $\Gamma$-bundle $X$ is nonempty if and only if $(1,1,\theta^{-1}(c))$ is in the image of $\hxgz\xrightarrow{\delta} H^2_{\Gamma,\theta}(X,\uz)$. Here,   for $\gamma$ and $\gamma'\in\Gamma$, we define
  $\theta^{-1}(c)(\gamma,\gamma'):=\theta^{-1}_{\gamma'\gamma}(c(\gamma',\gamma))$.
\end{corollary}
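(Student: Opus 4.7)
The plan is to chain together three already-established identifications to reduce the existence question to the last statement of Proposition \ref{prop-long-exact-seq-Z}. First, since $\Gamma$ acts freely on $X$ with quotient $Y = X/\Gamma$, Proposition \ref{prop:G-Ghat-equivalence} (together with Proposition \ref{prop:principal-Ghat-fixed-cover}) gives an equivalence between the category of $(\theta,c)$-twisted $\Gamma$-equivariant $G$-bundles on $X$ and the category of $\hat{G}$-bundles $\hat{E} \to Y$ equipped with an identification $\hat{E}/G \cong X$ as $\Gamma$-coverings of $Y$. In particular, the set of $\hat{G}$-bundles over $Y$ whose associated $\Gamma$-bundle is $X$ is nonempty if and only if the set of $(\theta,c)$-twisted $\Gamma$-equivariant $G$-bundles on $X$ is nonempty.

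Second, invoke Proposition \ref{prop-bijection-hx-bundles}: isomorphism classes of $(\theta,c)$-twisted $\Gamma$-equivariant $G$-bundles on $X$ are in bijection with the twisted equivariant cohomology set $\hxc$. Therefore the existence question is equivalent to the non-emptiness of $\hxc$. Note that this is a nontrivial condition precisely because, when $c$ is non-cohomologous to the trivial cocycle, $\hxc$ need not contain a distinguished element: the definition $Z^1_{\Gamma,\theta,c}(\UUU,\ug) := d_1^{-1}(1,1,\theta^{-1}(c))$ involves the target $(1,1,\theta^{-1}(c))$ rather than the identity.

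Finally, apply the second half of Proposition \ref{prop-long-exact-seq-Z}, which identifies the image of the natural forgetful map
\begin{equation*}
  \hxc \lto \hxgz
\end{equation*}
with the preimage under $\delta\colon \hxgz \to H^2_{\Gamma,\theta}(X,\uz)$ of the class of $(1,1,\theta^{-1}(c))$. Consequently $\hxc$ is nonempty if and only if this preimage is nonempty, which is the case if and only if the class of $(1,1,\theta^{-1}(c))$ lies in the image of $\delta$. Concatenating the three equivalences yields the corollary.

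The step that does any real work is the last one, and this is already carried out inside Proposition \ref{prop-long-exact-seq-Z}; here we only need to check that a $(\theta,c)$-twisted cocycle $({\onecochain},{\action}) \in \zuc$ projects to an honest (untwisted by $c$) cocycle in $\zugz$, which holds because $c$ takes values in $Z$ and thus becomes trivial modulo $Z$, so that the defining relation \eqref{eq-f-1-cocycle} descends to the untwisted $1$-cocycle relation in $G/Z$. No additional computation is required beyond unwinding the bijections.
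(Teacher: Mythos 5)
Your proposal is correct and follows essentially the same route as the paper: the paper derives this corollary precisely by chaining Proposition \ref{prop:principal-Ghat-fixed-cover} (bundles over $Y$ with associated $\Gamma$-bundle $X$ correspond to twisted equivariant $G$-bundles on $X$), Proposition \ref{prop-bijection-hx-bundles} (their classes are parametrized by $\hxc$), and the final assertion of Proposition \ref{prop-long-exact-seq-Z} identifying the image of $\hxc\to\hxgz$ with $\delta^{-1}\bigl(\bigl[(1,1,\theta^{-1}(c))\bigr]\bigr)$. Your closing observation that the reduction modulo $Z$ kills the $c$-twist, so the map $\zuc\to\zugz$ is well defined, is exactly the point the paper relies on as well.
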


\begin{proof}[Proof of Proposition \ref{prop-long-exact-seq-Z}]
\textbf{Exactness at $\hhxz$} is just injectivity, which is obvious. \textbf{Exactness at $\hhx$} follows the usual arguments: given a $\Gamma$-equivariant map $X\to Z$, its composition with the inclusion in $G$ and the quotient $G\to G/Z$ is the trivial map. Conversely, a $\Gamma$-equivariant map $X\to G$ with trivial image in $\hhxgz$ must come from a $\Gamma$-equivariant map $X\to Z$.

\textbf{Exactness at $\hhxgz$.} Given an element ${\zerocochain}\in\hhx$ with image $\overline {\zerocochain}\in\hhxgz$, we know that $(1,1)\cdot\overline {\zerocochain}$ is the class of $(1,1)\cdot {\zerocochain}=({\zerocochain}_{i}^{-1}  {\zerocochain}_{j},{\zerocochain}_i^{\gamma-1}\tg^{-1}({\zerocochain}_{i}))=(1,1)$, where ${\zerocochain}_{i}$ is the restriction of ${\zerocochain}$ to $U_i$ for each $i\in I$. Therefore, for each $\overline {\zerocochain}'\in\hhxgz$,
$$\delta(\overline{\zerocochain}\overline {\zerocochain}')=((1,1)\cdot {\zerocochain})\cdot\overline {\zerocochain}'=\delta(\overline {\zerocochain}').$$
For the converse, it is enough to prove that the kernel of $\delta$ is the image of $\hhx$ since, if this is true, $\delta(\overline{{\zerocochain}})=\delta(\overline{{\zerocochain}}')$ implies that $\overline{{\zerocochain}}\overline{{\zerocochain}}'^{-1}$ is in the image of $\hhx$ for every $\overline{{\zerocochain}}$ and $\overline{{\zerocochain}}'\in\hhxgz$. Consider a $\Gamma$-equivariant function $\overline {\zerocochain}:X\to G/Z$ such that $\delta(\overline {\zerocochain})$ is the trivial class. Then there exists a $\Gamma$-invariant open cover $\UUU$, a lift ${\zerocochain}=({\zerocochain}_i)\in\cu 0$ of $\overline {\zerocochain}$ and an element $z\in\cuz 0$ such that $((1,1)\cdot {\zerocochain})\cdot z=(1,1)$. Hence we conclude that
 ${\zerocochain}z$ is an element of $\hhu$ whose image in $\hhxgz$ coincides with $\overline {\zerocochain}$.

\textbf{Exactness at $\hxz$.} Let $\UUU$ be a countable $\Gamma$-invariant open cover and consider two elements $({\onecochain},{\action})$ and $({\onecochain}',{\action}')\in\zuz$ such that there exists ${\zerocochain}\in\cu 0$ satisfying $({\onecochain},{\action})=({\onecochain}',{\action}')\cdot {\zerocochain}$. Had we proven that the image $\overline {\zerocochain}$ of ${\zerocochain}$ in $\cugz 0$ is a $\Gamma$-equivariant map $X\to G/Z$ then the class of $({\onecochain},{\action})$ would be equal to the class of $({\onecochain}',{\action}')\cdot\overline {\zerocochain}$, as required. But the equation ${\zerocochain}_i^{-1}{\onecochain}_{ij}'{\zerocochain}_j={\onecochain}_{ij}$ on $U_{ij}$ implies that the compositions of ${\zerocochain}_i\vert_{U_{ij}}$ and ${\zerocochain}_j\vert_{U_{ij}}$ with $G\to G/Z$ are the same. Similarly, the equation ${\zerocochain}_{i}^{\gamma-1}{\action}'_{\gamma,i}\tg^{-1}({\zerocochain}_{i})=\fgg i$ implies that the compositions of ${\zerocochain}_{i}$ and $\tg^{-1}(\cgamma {\zerocochain}_{i})$ with $G\to G/Z$ are the same. Conversely, given two elements $({\onecochain},{\action})$ and $({\onecochain}',{\action}')\in\zuz$, $\overline {\zerocochain}\in\hhxgz$ and a lift ${\zerocochain}\in \cu 0$ of $\overline {\zerocochain}$ such that $({\onecochain}',{\action}')\cdot{\zerocochain}=({\onecochain},{\action})$, it follows by definition that the classes of $({\onecochain},{\action})$ and $({\onecochain}',{\action}')$ in $\hx$ are equal.

\textbf{Exactness at $\hx$.} First note that the action of $\hxz$ on the set $\hx$ preserves the image in $\hxgz$. Conversely, given two elements $({\onecochain},{\action})$ and $({\onecochain}',{\action}')\in\zu$ and $\overline{\zerocochain}\in\cugz 0$ such that $(\overline {\onecochain},\overline {\action})=(\overline {\onecochain}',\overline {\action}')\cdot \overline {\zerocochain}$, where overlining means taking the composition with $G\to G/Z$, we may take a lift $\zerocochain\in\cu 0$ of $\overline{\zerocochain}$ and we have $({\onecochain},{\action})=(({\onecochain}',{\action}')\cdot {\zerocochain})({\onecochain}_0,{\action}_0)$, where $({\onecochain}_0,{\action}_0)\in \cuz 0\times\Fun(\Gamma,\cuz 0)$. Applying $d_1$ on both sides we see that the fact that both $({\onecochain},{\action})$ and $({\onecochain}',{\action}')\cdot {\zerocochain}$ are in $\zu$ implies that $({\onecochain}_0,{\action}_0)\in\zuz$, as required.

Finally we prove that \textbf{the preimage of the class $[(1,1,\theta^{-1}(c))]\in H^2_{\Gamma,\theta}(X,\uz)$ is equal to the image of $\hxc$}; in particular, setting $c=1$, this implies exactness at $\hxgz$. If $({\onecochain},{\action})\in\zuc$ and $(\overline {\onecochain},\overline {\action})$ is its image in $\zugz$ then the image of the class of $(\overline {\onecochain},\overline {\action})$ under $\delta$ is represented by $d_1({\onecochain},{\action})=(1,1,\theta^{-1}(c))$. Conversely, if $(\overline {\onecochain},\overline {\action})\in\zugz$ has a lift $({\onecochain},{\action})\in C^1(U,\ug)\times\Fun(\Gamma,\cuz 0)$ and $d_1({\onecochain},{\action})$ is in the class of $(1,1,\theta^{-1}(c))$ then $d_1({\onecochain},{\action})=d_1({\onecochain}',{\action}')(1,1,\theta^{-1}(c))$ for some $({\onecochain}',{\action}')\in \cuz 1\times\Fun(\Gamma,\cuz 0)$. Hence $d_1({\onecochain}{\onecochain}'^{-1},{\action}{\action}'^{-1})=(1,1,\theta^{-1}(c))$ and $({\onecochain}{\onecochain}'^{-1},{\action}{\action}'^{-1})$ is a lift of $(\overline {\onecochain},\overline {\action})$, so that the image of the class of $({\onecochain}{\onecochain}'^{-1},{\action}{\action}'^{-1})$ in $\hx$ is equal to the class of $(\overline {\onecochain},\overline {\action})$ in $\hxgz$.
\end{proof}

\begin{remark}
A similar long exact sequence may be constructed using reduced twisted equivariant cohomology sets, namely
\begin{align*}
    1\to \hhxz\to\hhx\rgamma\to\hhxgz\rgamma\xrightarrow{\delta}\\
    \xrightarrow{\delta}\hxz\to\thx\to\thxgz\xrightarrow{\delta}\widetilde H^2_{\Gamma,\theta}(X,\uz)
\end{align*}
The group $\widetilde H^2_{\Gamma,\theta}(X,\uz)$ is the quotient of $H^2_{\Gamma,\theta}(X,\uz)$ by a suitable $Z(\Gamma)$-action, and the preimage of the class of $(1,1,\theta^{-1}(c))$ under the coboundary morphism is equal to the image of $\thxc$ in $\thxgz$.
\end{remark}

\begin{remark}
There is a generalization of the long exact sequence (\ref{eq-long-exact-seq-G-Z}) to the situation when $Z$ is replaced by a normal subgroup $H$ of $G$. When $H$ is not abelian there is no second twisted equivariant cohomology group with values in $H$, but we still have an exact sequence
\begin{align}\label{eq-long-exact-seq}
    1\to \hhxh\to\hhx\to\hhxgh\xrightarrow{\delta}\\
    \notag\xrightarrow{\delta}\hxh\to\hx\to\hxh.
\end{align}
Using reduced cohomology we also find the exact sequence
\begin{align*}
    1\to \hhxh\to\hhx\rtimes Z(\Gamma)\to\hhxgh\rtimes Z(\Gamma)\xrightarrow{\delta}\\
    \xrightarrow{\delta}\hxh\to\thx\to\thxgh.
\end{align*}
\end{remark}

\subsection{The case when $\Gamma$ acts freely.}
\label{section-nonempty-hx}

In this section we assume that $\Gamma$ acts freely on $X$, inducing
an \'etale morphism $p:X\to Y:=X/ \Gamma$; alternatively, $p:X\to Y$ is
a principal $\Gamma$-bundle. According to Section
\ref{equivalence-of-categories}, there should be a relation between
the twisted equivariant cohomology over $X$ introduced in Section
\ref{section-definitions} and the non-abelian cohomology over $Y$
defined in Section \ref{non-abelian-cohomology}. We establish this
explicitly. We continue to drop $c$ from the notation whenever it is trivial.

Consider the bundles of groups $X(G):=X\times_{\Gamma}G$ and $X(Z):=X\times_{\Gamma} Z$ over $Y$. Their sections, which are defined using the continuous/smooth/complex structure on $G$, determine sheaves of groups as in Section \ref{nac-sheaves}. A 2-cocycle $c\in Z^2_{\theta}(\Gamma,Z)$ determines a \v{C}ech 2-cocycle in $Z^2(Y,X(Z))$ as follows: take a good open cover $\VVV=\{V_i\}$ of $Y$ trivializing $X$, and let $s_i$ be a local trivialization on $V_i$. Then we define $(c_{ijk}):=(s_i,c(\gamma_{ij},\gamma_{jk}))\in Z^2(\VVV,X(Z))$, where $\gamma_{ij}\in \Gamma$ is the unique element in $\Gamma$ such that $s_i\gamma_{ij}$ intersects $s_j$. We check that this is indeed a 2-cocycle:
\begin{align*}
    c_{ijk}c_{ikl}=(s_i,c(\gamma_{ij},\gamma_{jk}))(s_i,c(\gamma_{ik},\gamma_{kl}))=
(s_i,c(\gamma_{ij},\gamma_{jk})c(\gamma_{lk},\gamma_{ki}))\stackrel{(\ref{cocycle-condition})}{=}\\
(s_i,\theta_{\gamma_{ij}}(c(\gamma_{jk},\gamma_{kl})c(\gamma_{ij},\gamma_{jl}))=
(s_j,c(\gamma_{jk},\gamma_{kl}))(s_i,c(\gamma_{ij},\gamma_{jl}))=c_{jkl}c_{ijl}.
\end{align*}

The class of $c_{ijk}$ in $H^2(Y,X(Z))$ is independent of the class of $c$ in the second Galois cohomology group $H^2_{\theta}(\Gamma,Z)$ as defined at the end of Section \ref{sec:extensions-lie}: given a map $a:\Gamma\to Z$, the 2-cocycle $\delta a\in Z^2_{\theta}(\Gamma,Z)$ as defined in Section \ref{sec:extensions-lie} gives
$$(\delta a)_{ijk}=(s_i,\theta_{\gamma_{ij}}(a(\gamma_{jk}))a(\gamma_{ik})^{-1}a(\gamma_{ij}))=(s_j,a(\gamma_{jk}))(s_i,a(\gamma_{ij}))(s_i,a(\gamma_{ik}))^{-1},$$
which is the derivative of the \v{C}ech 1-cochain $(s_i\vert_{V_{ij}},a(\gamma_{ij}))_{ij}$. Thus we get a morphism of pointed sets $H^2_{\theta}(\Gamma,Z)\to H^2(\VVV,X(Z))$, and in particular a morphism $H^2_{\theta}(\Gamma,Z)\to H^2(Y,X(Z))$. This depends on the choice of trivializations for $X$. As long as there is no ambiguity we denote the image of $c$ in $Z^2(\VVV,X(Z))$ with the same letter.

\begin{remark}
The existence of the morphism above is not surprising: let $\eg\to\bg$ be the universal $\Gamma$-bundle, and let $Y\to\bg$ be the map inducing the $\Gamma$-bundle $X$. There is an isomorphism $H^2_{\theta}(\Gamma,Z)\cong H^2(\bg,\eg\times_{\Gamma}Z_0)$, where $Z_0$ is the group $Z$ equipped with the discrete topology; this is seen using the bar resolution (see \cite{weibel}, Section 6.7). Composing with the inclusion $\eg\times_{\Gamma} Z_0\hookrightarrow\eg\times_{\Gamma}Z$ and pulling back by $Y\to \bg$ provides a map $H^2_{\theta}(\Gamma,Z)\to H^2(Y,X(Z))$. The matter of whether this map is equal to the one above will possibly be addressed in future work.
\end{remark}

Following \cite{karoubi} we define the \textbf{$c$-twisted first cohomology set} $H^1_{c}(Y,X(G))$:\footnote{Our $c$-twisted 1-cocycles are actually $c^{-1}$-twisted 1-cocycles in \cite{karoubi}. We adopt this convention to simplify notation.} given an open cover $\VVV=(V_i)_{i\in J}$ of $Y$ trivializing $X\to Y$ (with trivializations $s_i$) and an element $c_{ijk}\in Z^2(Y,X(Z))$, we may define the subset $Z^1_c(\VVV,X(G))$ of elements $\onecochain_{ij}\in C^1(\VVV,X(G))$ satisfying
\begin{equation}\label{eq-1-cocycle-Y}
    \onecochain_{ij}\onecochain_{jk}c_{ijk}=\onecochain_{ik}
\end{equation}
on $U_{ijk}$ for every $i,j$ and $k\in J$. If $c$ is trivial this is the set of 1-cocycles defined in Section \ref{nac-sheaves}. The usual action of $C^0(\VVV,X(G))$ on $C^1(\VVV,X(G))$ preserves $Z^1_c(\VVV,X(G))$ and so we may take the quotient set $H^1_c(\VVV,X(G)):=Z^1_c(\VVV,X(G))/C^0(\VVV,X(G))$. Finally, varying $\VVV$, we may define an inductive limit denoted by $H^1_c(Y,X(G))$.

\begin{theorem}\label{prop-long-exact-seq-Y}
  There are canonical isomorphisms
  $$
  \hhx\cong H^0(Y,X(G))\;\;\; \mbox{and}\;\;\; \hx\cong H^1(Y,X(G)).
  $$
  More generally, there are (in general non-canonical) isomorphisms
  $$
  \hxc\cong H_c^1(Y,X(G)).
  $$
Moreover, the long exact sequence (\ref{eq-long-exact-seq-G-Z}) induces the long exact sequence
\begin{align}\label{eq-long-exact-seq-cohomology}
    1\to H^0(Y,X(Z))\to H^0(Y,X(G))\to H^0(Y,X(G/Z))\to H^1(Y,X(Z))\\
    \notag\to H^1(Y,X(G))\to H^1(Y,X(G/Z))\to H^2(Y,X(Z)).
\end{align}
The preimage of $[c]\in H^2(Y,X(Z))$ under the coboundary map is precisely the image of $\hxc$ under the map $\hxc\cong H^1_c(Y,X(G))\to H^1(Y,X(G/Z))$ induced by the quotient $G\to G/Z$.
In particular, $\hxc$ (or $\thxc$) is nonempty if and only if $c$ is in the image of $\hxgz$.
\end{theorem}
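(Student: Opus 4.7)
The strategy is to reduce the twisted equivariant \v{C}ech machinery of Section~\ref{section-definitions} to ordinary \v{C}ech cohomology on the quotient $Y=X/\Gamma$ with values in the bundle of groups $X(G)$, by choosing a cofinal system of $\Gamma$-invariant open covers of $X$ that trivialize $p\colon X\to Y$. Concretely, given any open cover $\VVV=\{V_j\}_{j\in J}$ of $Y$ trivializing $p$, with sections $s_j\colon V_j\to X$, I would form the $\Gamma$-invariant open cover $\UUU_{\VVV}:=\{s_j(V_j)\cdot\gamma\}_{(j,\gamma)\in J\times\Gamma}$ of $X$, which, after refining so that each $V_{ij}$ is connected, comes equipped with canonical elements $\gamma_{ij}\in\Gamma$ satisfying $s_j=s_i\cdot\gamma_{ij}^{-1}$ on $V_{ij}$. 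Covers of this form are cofinal among $\Gamma$-invariant covers of $X$, so one may compute the inductive limits $\hxc$ and $H^1_c(Y,X(G))$ along them.

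Next I would set up the comparison at the cochain level. A twisted equivariant cocycle $(\onecochain,\action)\in\zuc$ on $\UUU_{\VVV}$ is determined by $\Gamma$-equivariance from its values on indices of the form $(j,1)$, and the only nonempty intersections among canonical representatives after a single $\Gamma$-translation are $U_{(i,1)}\cap U_{(j,\gamma_{ij}^{-1})}=s_i(V_{ij})$. Putting
\begin{equation*}
    \widetilde{\onecochain}_{ij}(y):= [\,s_i(y),\;\onecochain_{(i,1),(j,\gamma_{ij}^{-1})}(s_i(y))\,]\in X(G)_y,\qquad y\in V_{ij},
\end{equation*}
the ordinary cocycle condition on $\onecochain$ combined with the twisted equivariance identity (\ref{eq-compatibility-a-f}) translates, using the equivalence $[x\cdot\gamma^{-1},g]=[x,\theta_\gamma(g)]$ in $X(G)$, into the $c$-twisted cocycle relation (\ref{eq-1-cocycle-Y}) for $\widetilde{\onecochain}$ with respect to the 2-cocycle $c_{ijk}=(s_i,c(\gamma_{ij},\gamma_{jk}))$ of Section \ref{section-nonempty-hx}. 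In the reverse direction, a cocycle $\widetilde{\onecochain}\in Z^1_c(\VVV,X(G))$ lifts first to a $G$-valued cocycle on the $(j,1)$-slice and then extends $\Gamma$-invariantly to all of $\UUU_{\VVV}$, with the $\action$-data recovered by comparing canonical representatives to their $\Gamma$-translates (equation (\ref{eq-f-1-cocycle}) being then automatic from the $c$-twisted cocycle condition on $\widetilde{\onecochain}$). The action (\ref{eq-action-c0-c1}) of $\cu 0$ on $\zuc$ becomes the standard action of $C^0(\VVV,X(G))$ on $Z^1_c(\VVV,X(G))$, yielding the bijection $\hxc\cong H^1_c(Y,X(G))$; the analogous but simpler $\Gamma$-descent identifies $\hhx$ with $H^0(Y,X(G))$ canonically, and identifies $H^2_{\Gamma,\theta}(X,\uz)$ with $H^2(Y,X(Z))$ (this last being the abelian case of the same recipe).

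The long exact sequence (\ref{eq-long-exact-seq-cohomology}) is then obtained by applying these three comparisons term-by-term to the long exact sequence of Proposition \ref{prop-long-exact-seq-Z}; naturality of the comparisons in the coefficient sheaf ensures compatibility of all the maps, including the coboundaries. Finally, under the identification $H^2_{\Gamma,\theta}(X,\uz)\cong H^2(Y,X(Z))$, the constant twisted equivariant 2-cocycle $(1,1,\theta^{-1}(c))$ is sent, essentially by construction, to the \v{C}ech class of $c_{ijk}=(s_i,c(\gamma_{ij},\gamma_{jk}))$, i.e.\ to $[c]$; combining this with the preimage characterization in the last part of Proposition \ref{prop-long-exact-seq-Z} yields both the description of the preimage of $[c]$ under the coboundary map and the nonemptiness criterion for $\hxc$.

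The main obstacle will be the bookkeeping in the second step: verifying that (\ref{eq-compatibility-a-f}) and (\ref{eq-f-1-cocycle}) for $\action$ on $\UUU_{\VVV}$ correspond, after restriction to canonical indices $(j,1)$, exactly to the $c_{ijk}$-twisted cocycle condition on $\widetilde{\onecochain}$ with values in $X(G)$, and conversely that any such $\widetilde{\onecochain}$ extends uniquely up to $\cu 0$-action to a full twisted equivariant datum. The tangle of conventions for left versus right actions of $\Gamma$ on $X$ and on $G$, together with the equivalence $[x\gamma,g]=[x,\theta_\gamma(g)]$ defining $X(G)$, will produce all the fiddly signs but no conceptual difficulty; in particular the ``non-canonicity'' in the statement reflects only that the class $[c_{ijk}]\in H^2(Y,X(Z))$ depends on the trivializations $\{s_j\}$ (different choices alter it by a coboundary, coherently with $H^2_\theta(\Gamma,Z)\to H^2(Y,X(Z))$ being a map of pointed sets).
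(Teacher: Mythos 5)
Your proposal follows essentially the same route as the paper: compute both inductive limits along $\Gamma$-invariant covers of the form $\UUU=p^{-1}(\VVV)$ with a chosen system of sheets $s_j(V_j)$, read off a $c$-twisted $X(G)$-valued cocycle on $Y$ from the data on the canonical sheets, and then transport the long exact sequence of Proposition \ref{prop-long-exact-seq-Z} term by term. The one imprecision is your claim that a twisted equivariant cocycle $(\onecochain,\action)$ is determined by the values of $\onecochain$ on the canonical indices alone: the data $\action_{\gamma,i}$ on the canonical slice is independent extra information, so the naive assignment $\widetilde{\onecochain}_{ij}:=\onecochain_{(i,1),(j,\gamma_{ij}^{-1})}$ neither captures the full cocycle nor satisfies the $c$-twisted relation (\ref{eq-1-cocycle-Y}) in general; the paper fixes this by first gauging with $\zerocochain_i\vert_{s_i\cdot\gamma}=\action_{\gamma,i}$ so that $\action'$ becomes trivial on the canonical sheets (whence (\ref{eq-trivial-1-cocycle})), equivalently by defining $h_{ij}=\action_{\gamma_{ij},i}^{-1}\onecochain_{i\cdot\gamma_{ij},j}$. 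With that normalization inserted, your argument matches the paper's proof.
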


\begin{remark}\label{remark-comparison-long-exact-seqs}
Proposition \ref{prop-long-exact-seq-Y} and Remark \ref{cor-comparison-twisted-equivariant-long-exact-seq} imply that (\ref{eq-long-exact-seq-cohomology}) is an alternative to (\ref{eq-long-exact-seq-lifting-ghat}) when the associated $\Gamma$-bundle $X$ is fixed. It answers the question of the existence of $\hat G$-bundles $E$ over $Y$ such that $E(\Gamma)\cong X$ by considering them as lifts of $X(G/Z)$-bundles, where $E(\Gamma)$ is the extension of structure group of $E$ via the quotient $\hat G\to\Gamma$.
\end{remark}

\begin{proof}[Proof of Theorem \ref{prop-long-exact-seq-Y}]
We define the isomorphisms, leaving the rest to the reader. The first one is clear, since $\Gamma$-equivariant sections of $\ug$ over $X$ are precisely $X(G)$-sections over $Y$.

\textbf{Definition of the isomorphisms.} Consider a good cover $\VVV=(V_i)$ of $Y$ trivializing $X\to Y$, choose a system of local trivializations $(s_i)$. Let $\UUU$ be the preimage of $\VVV$ over $X$; the sections $s_i$ determine elements of $\UUU$ which we call $U_i$, assuming for simplicity $J\subset I$. Define $\zerocochain\in\cu0$ so that ${\zerocochain}_i\vert_{s_i}=1$ and ${\zerocochain}_i\vert_{s_i\cdot\gamma}=\action_{\gamma,i}$ for each $\gamma\in \Gamma$. Set $(\onecochain',\action'):=(\onecochain,\action)\cdot \zerocochain$, which is in the same class as $(\onecochain,\action)$ and satisfies that $\action'_{\gamma,i}\vert_{s_i\cdot\gamma}:=1$; indeed,
$$(\action\cdot\zerocochain)_{\gamma,i} = (\zerocochain^{\gamma-1}_i\fgg i\tg^{-1}(\zerocochain_i)) = (\action_{\gamma,i}^{-1}\fgg i)=1.$$
Moreover, since $(\onecochain',\action')\in\zu$ by construction, the set $(\action'_{\gamma,i})_{\gamma\in \Gamma,i\in J}$ determines $\action'$ by (\ref{eq-f-1-cocycle}): since $\Gamma$ acts transitively on the fibres of $p$, every element of $I$ is of the form $i\cdot\gamma$ for some $\gamma\in \Gamma$ and $i\in J$. But, for each $\gamma',\gamma\in\Gamma$ and $i\in J$, we have
\begin{equation}\label{eq-trivial-1-cocycle}
    \action'_{\gamma, i\cdot\gamma'}=\action'^{\gamma'}_{\gamma,i}=\action'_{\gamma'\gamma,i}\theta_{\gamma'\gamma}^{-1}(c(\gamma',\gamma))\tg^{-1}(\action'_{\gamma',i})^{-1}=\theta_{\gamma'\gamma}^{-1}(c(\gamma',\gamma)).
\end{equation}

In this setting the image of the class of $(\onecochain,\action)$ in $Z^1(\VVV,X(G))$ is defined to be $(s_j\vert_{V_{ij}},h_{ij}:=\onecochain'_{i\cdot\gamma_{ij},j})$, where $\gamma_{ij}$ is the unique element of $\Gamma$ such that $U_{i\cdot\gamma_{ij},j}$ is nonempty and we have also called $\onecochain'_{i\cdot\gamma_{ij},j}$ to the composition
$$V_{ij}\cong U_{i\cdot\gamma_{ij}}\cap U_j\xrightarrow{\onecochain'_{i\cdot\gamma_{ij},j}}G.$$
More explicitly, $h_{ij}=\action_{\gamma_{ij},i}^{-1}\onecochain_{i\cdot\gamma_{ij},j}$. Let us check that this is in $Z^1_c(\VVV,X(G))$: for every three elements $i,j$ and $k\in J$ we have
\begin{align*}
    (s_j,h_{ij})(s_k,h_{jk})=(s_k,\theta_{\gamma_{kj}}(h_{ij})h_{jk})=
(s_k,\theta_{\gamma_{kj}}(\onecochain'_{i\cdot\gamma_{ij},j})\onecochain'_{j\cdot\gamma_{jk},k})\stackrel{(\ref{eq-compatibility-a-f})}{=}\\
(s_k,\action_{\gamma_{jk},i\cdot\gamma_{ij}}'^{-1}\onecochain'^{\gamma_{jk}}_{i\cdot\gamma_{ij},j}\action'_{\gamma_{jk},j}\onecochain'_{j\cdot\gamma_{jk},k})\stackrel{(\ref{eq-trivial-1-cocycle})}{=}(s_k,\theta_{ik}^{-1}(c(\gamma_{ij},\gamma_{jk}))^{-1}\onecochain'^{\gamma_{jk}}_{i\cdot\gamma_{ij},j}\onecochain'_{j\cdot\gamma_{jk},k})=\\
(s_k,\onecochain'_{i\cdot\gamma_{ik},k})(s_i,c(\gamma_{ij},\gamma_{jk}))^{-1}=(s_k,h_{ik})c_{ijk}^{-1},
\end{align*}
as required.

\textbf{Well-definedness.} Now we show that the class of $(s_i,h_{ij})$ is independent of the class of $(\onecochain,\action)$. Let $\zerocochain\in\cu 0$ and consider $(\onecochain',\action')=(\onecochain,\action)\cdot\zerocochain$, with corresponding cocycle $(s_i,h'_{ij})\in C^1(\VVV,X(G))$. Without loss of generality we may assume that $\action_{\gamma,i}=\action'_{\gamma,i}=1$ for every $\gamma\in\Gamma$ and $i\in J$, $h_{ij}=\onecochain_{i\cdot\gamma_{ij},j}$ and $h'_{ij}=\onecochain'_{i\cdot\gamma_{ij},j}$. Then we have
$$1=\fgg i'={\zerocochain}_i^{\gamma-1}{\action}_{\gamma,i}\tg^{-1}({\zerocochain}_{i})={\zerocochain}_i^{\gamma-1}\tg^{-1}({\zerocochain}_{i}),$$
which implies that ${\zerocochain}_i^{\gamma}=\tg^{-1}({\zerocochain}_{i}).$ Therefore,
$$(s_j,h'_{ij})=(s_j,{\zerocochain}_{i\cdot\gamma_{ij}}^{-1} h_{ij} {\zerocochain}_{j})=(s_j,\theta_{\gamma_{ij}}^{-1}({\zerocochain}_{i})^{-1} h_{ij} {\zerocochain}_{j})=(s_i,{\zerocochain}_{i})^{-1}
(s_j,h_{ij})
(s_j, {\zerocochain}_{j}),$$
which is isomorphic to $(s_i,h_{ij})$.

The class of $(s_i,h_{ij})$ in $H^1(\VVV,X(G))$ does depend on the choice of local sections for $X$ unless $c$ is trivial: consider another system of trivializations $s_i\cdot\gamma_i$. By the previous paragraph we may assume, after changing the representative of the equivalence class in $\hxc$, that $h_{ij}=f_{i\cdot\gamma_{ij},j}$ and (\ref{eq-trivial-1-cocycle}) holds. From (\ref{eq-compatibility-a-f},\ref{eq-trivial-1-cocycle}) we have

\begin{align*}
  (s_j\cdot\gamma_j,\action_{\gamma_{i}^{-1}\gamma_{ij}\gamma_j,i\cdot\gamma_i}^{-1}\onecochain^{\gamma_j}_{i\cdot\gamma_{ij},j})
            =\\
    (s_j\cdot\gamma_j,\theta_{\gamma_{ij}\gamma_j}^{-1}(c(\gamma_i,\gamma_{i}^{-1}\gamma_{ij}\gamma_j))^{-1}\theta_{\gamma_{ij}\gamma_j}^{-1}(c(\gamma_{ij},\gamma_j))^{-1}\theta_{\gamma_j}^{-1}(\onecochain_{i\cdot\gamma_{ij},j}))=\\
    (s_i,c(\gamma_i,\gamma_{i}^{-1}\gamma_{ij}\gamma_j)c(\gamma_{ij},\gamma_j))^{-1}
    (s_j,\onecochain_{i\cdot\gamma_{ij},j}).
\end{align*}

\textbf{Surjectivity.} Let $(s_j,h_{ij})\in Z^1(\VVV,X(G))$. We want an element $(\onecochain,\action)\in\zu$ such that $\onecochain_{i\cdot\gamma_{ij},j}=h_{ij}$ and $\action_{\gamma,i}=1$ for every $\gamma\in\Gamma$ and $i$ and $j$ in $J$. Equation $(\ref{eq-f-1-cocycle})$ determines $\action$, whereas (\ref{eq-compatibility-a-f}) determines $\onecochain$.

\textbf{Injectivity.} Let $(\onecochain,\action)$ and $(\onecochain',\action')$ in $\zu$ such that the corresponding elements $(s_j,h_{ij})$ and $(s_j,h_{ij}')\in Z^1(\VVV,X(G))$ are isomorphic, say
$$(s_j,h_{ij}')=(s_i,\zerocochain_i)^{-1}(s_j,h_{ij})(s_j,\zerocochain_j).$$
We may assume that $h_{ij}'=\onecochain_{i\cdot\gamma_{ij},j}'$. Then
$$\onecochain_{i\cdot\gamma_{ij},j}'=\theta_{\gamma_{ij}}^{-1}(\zerocochain_i)^{-1}\onecochain_{i\cdot\gamma_{ij},j}\zerocochain_j.$$
Thus $\onecochain'=\onecochain\cdot\zerocochain'$, where $\zerocochain'$ is the extension of $\zerocochain$ determined by $\zerocochain'_{i\cdot\gamma}=\tg^{-1}(\zerocochain_i)$ for each $i\in J$ and $\gamma\in\Gamma$. The fact that $\action'=\action\cdot\zerocochain'$ is checked easily using (\ref{eq-trivial-1-cocycle}).

\textbf{Refinements.} Finally, it can be shown that, if we replace $\VVV$ by a good subcover, the bijections are compatible with the respective restriction maps and so they induce an isomorphism $\hx\cong H^1_c(Y,X(G))$.
\end{proof}

\begin{remark}
The $Z(\Gamma)$-action on $\hxc$ induces a $Z(\Gamma)$-action on $H^1_c(Y,X(G))$. When $c$ is trivial this is the one given by the $Z(\Gamma)$-action on the left or the right factor of $X(G)$. According to the proof of Proposition \ref{prop:morhisms-fixed-covering} the group $ Z(\Gamma)$ is equal to the group of covering transformations $H^0(Y,X(\Gamma))$. With this interpretation, the induced action is the one involved in the statement of Proposition \ref{prop-grothendieck} when $c$ is trivial and $E_{\hat G}$ is obtained from the trivial $\theta$-twisted $\Gamma$-equivariant $G$-bundle over $X$ (which implies $E_{\hat G}(G)\cong X(G)$).
\end{remark}

\begin{corollary}
The set $H^1_{\Gamma,\theta,c}(X,\uz)$ is empty unless the class of $c$ in $H^2(Y,X(Z))$ is trivial. In other words, there are no $(\theta,c)$-twisted $\Gamma$-equivariant $Z$-bundles over $X$ unless $[c]\in H^2(Y,X(Z))$ is trivial.
\end{corollary}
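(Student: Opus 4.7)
The plan is to apply Theorem \ref{prop-long-exact-seq-Y} with $G$ replaced by the abelian group $Z$. Since $Z$ equals its own centre, the quotient $Z/Z(Z)=Z/Z$ is trivial, so the whole long exact sequence collapses in a very useful way: the term $H^1(Y,X(Z/Z))$ is reduced to a singleton (the class of the trivial $X(\{1\})$-torsor).

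First I would specialise the last part of Theorem \ref{prop-long-exact-seq-Y} to $G=Z$. It states that the image of the map
\[
H^1_{\Gamma,\theta,c}(X,\uz)\;\cong\;H^1_c(Y,X(Z))\;\lto\;H^1(Y,X(Z/Z))
\]
induced by the quotient $Z\to Z/Z$ is the preimage of the class $[c]\in H^2(Y,X(Z))$ under the coboundary map $H^1(Y,X(Z/Z))\xra{\delta}H^2(Y,X(Z))$. In particular, $H^1_{\Gamma,\theta,c}(X,\uz)$ is nonempty if and only if $[c]$ lies in the image of $\delta$.

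Next I would observe that, since $Z/Z$ is the trivial group, the set $H^1(Y,X(Z/Z))$ has exactly one element, namely the distinguished class. By naturality of the coboundary, this distinguished class is sent to the distinguished (trivial) class of $H^2(Y,X(Z))$. Consequently the image of $\delta$ consists solely of the trivial class, and the nonemptiness criterion above becomes $[c]=0$ in $H^2(Y,X(Z))$.

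This immediately yields the corollary: if $[c]\neq 0$ in $H^2(Y,X(Z))$, there cannot exist any $(\theta,c)$-twisted $\Gamma$-equivariant $Z$-bundle on $X$. There is no genuine obstacle here; the only point that deserves a brief verification is that the isomorphism $\hxc\cong H^1_c(Y,X(G))$ and the identification of $[c]\in H^2(Y,X(Z))$ from Section \ref{section-nonempty-hx} are compatible with the specialisation $G=Z$, which is transparent from the constructions since when $G=Z$ every object in the cocycle formulae lies already in $Z$.
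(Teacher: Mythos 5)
Your proof is correct and follows exactly the intended derivation: the paper states this as an immediate corollary of Theorem \ref{prop-long-exact-seq-Y}, and specialising that theorem to $G=Z$ so that $H^1(Y,X(Z/Z))$ is a singleton whose coboundary image is the trivial class of $H^2(Y,X(Z))$ is precisely the argument the authors leave implicit. No gaps.
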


\begin{theorem}\label{prop-iso-karoubi-ghat-bundles}
Let $\hat G:=G\times_{\theta,c}\Gamma$ be the twisted product defined in Section \ref{sec:extensions-lie}. There is a (non-canonical unless $c$ is trivial) surjection
\begin{equation}\label{eq-iso-twisted-cohomology-Y-ghat}
  H^1_c(Y,X(G))\to \pi^{-1}(X),
\end{equation}
where $\pi:H^1(Y,\underline{\hat G})\to H^1(Y,\Gamma)$ is induced by the quotient as in Section \ref{non-abelian-cohomology}. The composition with the isomorphism of Proposition \ref{prop-long-exact-seq-Y} induces the isomorphism
\begin{equation}\label{eq-iso-twisted-equivariant-cohomology-ghat}
    \thxc=\hxc/Z(\Gamma)\cong \pi^{-1}(X)
\end{equation}
given by Propositions \ref{prop:principal-Ghat-fixed-cover} and \ref{prop-bijection-hx-bundles-reduced}.
\end{theorem}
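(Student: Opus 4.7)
The plan is to construct the surjection (\ref{eq-iso-twisted-cohomology-Y-ghat}) by composing three previously established identifications and then to verify that passing to the $Z(\Gamma)$-quotient yields the announced bijection (\ref{eq-iso-twisted-equivariant-cohomology-ghat}). No genuinely new constructions are required; the work is in threading together the results of Sections \ref{equivalence-of-categories}, \ref{section-definitions} and \ref{section-nonempty-hx}.

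First, I would invoke Proposition~\ref{prop-long-exact-seq-Y} to obtain an isomorphism $\Phi\colon \hxc \xra{\cong} H^1_c(Y, X(G))$. By the statement of that proposition, $\Phi$ depends on a choice of local trivializations of the $\Gamma$-bundle $p\colon X\to Y$, and is therefore canonical only when $c$ is trivial. Next, Proposition~\ref{prop-bijection-hx-bundles-reduced} identifies $\thxc = \hxc/Z(\Gamma)$ canonically with the set $\tsx$ of isomorphism classes of $(\theta,c)$-twisted $\Gamma$-equivariant $G$-bundles on $X$ where isomorphisms are allowed to cover any element of $Z(\Gamma) \subseteq \Aut(X/Y)$. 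Finally, combining Proposition~\ref{prop:G-Ghat-equivalence} (the equivalence $\cc 1 \simeq \cc 2$) with Proposition~\ref{prop:morhisms-fixed-covering} gives a canonical bijection between $\tsx$ and $\pi^{-1}([X]) \subseteq H^1(Y,\underline{\hatg})$: the objects of $\cc 2$ are exactly $\hatg$-bundles $E\to Y$ admitting an isomorphism $E/G \cong X$, and Proposition~\ref{prop:morhisms-fixed-covering} shows that arbitrary morphisms of such $\hatg$-bundles correspond precisely to morphisms of the corresponding twisted equivariant $G$-bundles on $X$ covering some element of $Z(\Gamma)$.

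Composing $\Phi^{-1}$ with the quotient $\hxc \to \thxc$ and with the canonical bijection $\thxc \xra{\cong} \pi^{-1}([X])$ of the previous paragraph produces the map (\ref{eq-iso-twisted-cohomology-Y-ghat}). Surjectivity is automatic because each factor is surjective (indeed, the outer two are bijections). Since the only failure of injectivity occurs at the step $\hxc \to \thxc$, quotienting the source by $Z(\Gamma)$ makes the map a bijection, which is exactly (\ref{eq-iso-twisted-equivariant-cohomology-ghat}); moreover, by construction this bijection coincides with the composition of Propositions~\ref{prop:principal-Ghat-fixed-cover} and \ref{prop-bijection-hx-bundles-reduced}. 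The non-canonicity of (\ref{eq-iso-twisted-cohomology-Y-ghat}) is inherited entirely from $\Phi$, so when $c$ is trivial the whole construction is canonical, as claimed.

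The main obstacle, such as it is, lies in checking that the bijection $\thxc \cong \pi^{-1}([X])$ obtained via $H^1_c(Y, X(G))$ agrees with the one built directly from Propositions~\ref{prop:principal-Ghat-fixed-cover} and \ref{prop-bijection-hx-bundles-reduced}. This is a bookkeeping exercise: one unwinds the cocycle formulas $({\onecochain},{\action})\in\zuc$ of Section~\ref{section-definitions}, compares them with the explicit $c$-twisted \v{C}ech cocycle $(s_j, h_{ij}) = (s_j, \action_{\gamma_{ij},i}^{-1}\onecochain_{i\cdot\gamma_{ij},j})$ produced in the proof of Proposition~\ref{prop-long-exact-seq-Y}, and confronts the result with the construction $E \mapsto E\times_{G}\hatg$ of Proposition~\ref{prop-equivalence-twisted-equivariant-vs-equivariant} applied to the bundle reconstructed from $({\onecochain},{\action})$. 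Since all maps involved are defined at the level of cocycles using the same transition data, the verification amounts to tracking indices and applying (\ref{eq-compatibility-a-f}) and (\ref{eq-f-1-cocycle}); no conceptual difficulty arises.
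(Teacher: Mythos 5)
Your argument is logically valid for the literal statement, but it takes a genuinely different --- and in effect much weaker --- route than the paper. You \emph{define} the surjection $H^1_c(Y,X(G))\to\pi^{-1}(X)$ as $\Phi^{-1}$ followed by the quotient $\hxc\to\thxc$ and the geometric bijection $\thxc\cong\pi^{-1}(X)$; with that definition the second assertion of the theorem holds by construction, surjectivity is automatic, and the theorem carries no content beyond its constituent propositions. The paper instead constructs an explicit cocycle-level map, sending $(s_i,h_{ij})\in Z^1_c(\VVV,X(G))$ to $(h_{ij},\gamma_{ij})\in Z^1(\VVV,\underline{\hat G})$, checks that the $c$-twisted cocycle condition \eqref{eq-1-cocycle-Y} translates into the honest cocycle condition in $\hat G$ and that equivalent $c$-twisted cocycles map to equivalent $\hat G$-cocycles, and then verifies that precomposing with the isomorphism of Proposition \ref{prop-long-exact-seq-Y} recovers the geometric correspondence of Proposition \ref{prop:principal-Ghat-fixed-cover}: one reconstructs $E$ from $(\onecochain,\action)\in\zuc$ with trivializations $e_i$ and shows that $(\theta_{\gamma_{ij}}(\onecochain_{i\cdot\gamma_{ij},j}),\gamma_{ij})$ is exactly its $\hat G$-transition cocycle. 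That computation is simultaneously what establishes surjectivity onto $\pi^{-1}(X)$ and what identifies the induced map on $\thxc$; the explicit formula it produces is the real content of the theorem (it is what makes the link with Karoubi-style twisted cohomology usable). The verification you defer in your final paragraph as ``a bookkeeping exercise'' is precisely the paper's proof, and it is vacuous in your setup only because you have defined the map so as to make it so; if the intended surjection is the natural cocycle-level one, that check is the nontrivial step and you have not carried it out.
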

\begin{proof}
Let $\VVV=(V_i)_{i\in J}$ be a good cover trivializing $X$ over $Y$. Consider the morphism which sends the class of $(s_i,h_{ij})\in Z_c^1(\VVV,X(G))$ to the class of $(h_{ij},\gamma_{ij})\in Z^1(Y,\hat G)$. This is well defined: we have the cocycle condition
$$(h_{ij},\gamma_{ij})(h_{jk},\gamma_{jk})=(h_{ij}\theta_{\gamma_{ij}}(h_{jk})c(\gamma_{ij},\gamma_{jk}),\gamma_{ik})=(h_{ik},\gamma_{ik}),$$
where the last equation follows from
$$(s_i,h_{ij}\theta_{\gamma_{ij}}(h_{jk})c(\gamma_{ij},\gamma_{jk}))=(s_i,h_{ij})(s_j,h_{jk})(s_i,c(\gamma_{ij},\gamma_{jk}))=(s_i,h_{ik}).$$
Moreover, given $(s_i,\onecochain_i)\in C^0(\VVV,X(G))$ we have
$$(s_i,\onecochain_i)^{-1}(s_i,h_{ij})(s_j,\onecochain_j)=(s_i,\onecochain_i^{-1}h_{ij}\theta_{ij}(\onecochain_j)),$$
whose image is equal to $\onecochain_i^{-1}(h_{ij},\gamma_{ij})\onecochain_j$.


To finish the proof it is left to show that the composition
$$\hxc\to H^1_c(Y,X(G))\to H^1(Y,\hat G)$$
induces the isomorphism given by Proposition \ref{prop:principal-Ghat-fixed-cover}, since this implies in particular that the image of the second map is equal to $\pi^{-1}(X)$. Let $\UUU=(U_i)_{i\in I}:=p^{-1}(\VVV)$ and consider an element $(\onecochain,\action)\in \zuc$. Let $E$ be the $(\theta,c)$-twisted $\Gamma$-equivariant $G$-bundle given by Proposition \ref{prop-bijection-hx-bundles}, which is equipped with trivializations $(e_i)_{i\in I}$ such that $e_j=e_i\onecochain_{ij}$ for each $i$ and $j$ in $I$. By the proof of Proposition \ref{prop-long-exact-seq-Y} we may assume that its image in $H^1_c(Y,X(G))$ is equal to $(s_j,\onecochain_{i\cdot\gamma_{ij},j})_{i,j\in J}=(s_i,\theta_{\gamma_{ij}}(\onecochain_{i\cdot\gamma_{ij},j}))$, whose image in $H^1(Y,\hat G)$ is $(\theta_{\gamma_{ij}}(\onecochain_{i\cdot\gamma_{ij},j}),\gamma_{ij})$. Recall that Proposition \ref{prop:principal-Ghat-fixed-cover} lets us regard $E$ as a $\hat G$-bundle such that, for every $\gamma\in\Gamma$, the action of an element $(1,\gamma)\in\hat G$ coincides with the given action of $\gamma$. With this notation,
$$e_i(\theta_{\gamma_{ij}}(\onecochain_{i\cdot\gamma_{ij},j}),\gamma_{ij})=e_i(1,\gamma_{ij})\onecochain_{i\cdot\gamma_{ij},j}=(e_i\cdot\gamma_{ij})\onecochain_{i\cdot\gamma_{ij},j}=e_j.$$
This shows that the 1-cocycle $(\theta_{\gamma_{ij}}(\onecochain_{i\cdot\gamma_{ij},j}),\gamma_{ij})\in Z^1(\VVV,\hat G)$ represents the $\hat G$-bundle corresponding to $E$, as required.
\end{proof}

\end{document}